\newtheorem{theorem}{Theorem}[section]
\newtheorem{lemma}[theorem]{Lemma}
\newtheorem{proposition}[theorem]{Proposition}
\newtheorem{corollary}[theorem]{Corollary}
\theoremstyle{definition}
\newtheorem{definition}[theorem]{Definition}
\newtheorem{notation}[theorem]{Notation}
\newtheorem{example}[theorem]{Example}
\newtheorem{remark}[theorem]{Remark}
\numberwithin{equation}{section}
\newcommand{\Proj}{\operatorname{Proj}}
\newcommand{\N}{\mathbb{N}}
\newcommand{\K}{\mathbb{K}}
\newcommand{\viii}[1]{{\left\vert\kern-0.25ex\left\vert\kern-0.25ex\left\vert #1   \right\vert\kern-0.25ex\right\vert\kern-0.25ex\right\vert}}
\newcommand{\one}{\mathbbm{1}}
\newcommand{\A}{\mathcal{A}}
\newcommand{\mcb}{\mathscr{B}}
\newcommand{\mcg}{\mathscr{G}}
\newcommand{\mck}{\mathscr{K}}
\newcommand{\mci}{\mathscr{I}}
\newcommand{\mcj}{\mathscr{J}}
\newcommand\notni{\mathrel{\m@th\mathpalette\canc@l\owns}}
\newcommand\canc@l[2]{{\ooalign{$\hfil#1/\mkern1mu\hfil$\crcr$#1#2$}}}
\newcommand{\smashw}[2][l]{{\text{\makebox[0pt][#1]{$#2$}}}}
\renewcommand{\leq}{\ensuremath{\leqslant}}
\renewcommand{\le}{\ensuremath{\leqslant}}
\renewcommand{\geq}{\ensuremath{\geqslant}}
\renewcommand{\ge}{\ensuremath{\geqslant}}
\renewcommand{\epsilon}{\ensuremath{\varepsilon}}
\renewcommand{\phi}{\ensuremath{\varphi}}
\subjclass[2010]{46H10,
  46H40,
47L10
(primary); 
46B26,
46B45,   	
46E15,
47B01,
47L20 (secondary)}
\keywords{Banach space, bounded operator, closed operator ideal,  quotient algebra, uniqueness of algebra norm, long sequence space, Mr\'{o}wka space, quantitative factorization of operators}
\title[Uniqueness of algebra norm on quotients of $\mathscr{B}(X)$]{Uniqueness of algebra norm on quotients of the  algebra of bounded operators on a Banach space}
\dedicatory{In memoriam: H.~G.~Dales (1944--2022)}
\author[M.~Arnott]{Max Arnott}
\address[M.~Arnott]{Department of Mathematics and Statistics, Fylde College, Lancaster University, Lancaster, LA1 4YF, United Kingdom}
\email{m.arnott@lancaster.ac.uk}
\author[N.~J.~Laustsen]{Niels Jakob Laustsen}
\address[N.~J.~Laustsen]{Department of Mathematics and Statistics, Fylde College, Lancaster University, Lancaster, LA1 4YF, United Kingdom}
\email{n.laustsen@lancaster.ac.uk}
\begin{document}
\begin{abstract}
We show that for each of the following Banach spaces~$X$,  the quotient algebra $\mcb(X)/\mci$ has a unique algebra norm for every closed ideal $\mci$ of $\mcb(X)\colon$ 
\begin{itemize}   
    \item $X= \bigl(\bigoplus_{n\in\N}\ell_2^n\bigr)_{c_0}$\quad and its dual,\quad $X=  \bigl(\bigoplus_{n\in\N}\ell_2^n\bigr)_{\ell_1}$,
    \item $X=  \bigl(\bigoplus_{n\in\N}\ell_2^n\bigr)_{c_0}\oplus c_0(\Gamma)$\quad and its dual, \quad $X= \bigl(\bigoplus_{n\in\N}\ell_2^n\bigr)_{\ell_1}\oplus\ell_1(\Gamma)$,\quad for an uncountable cardinal number~$\Gamma$,
    \item $X = C_0(K_{\mathcal{A}})$, the Banach space of continuous functions vanishing at infinity on the locally compact Mr\'{o}wka space~$K_{\mathcal{A}}$ induced by an uncountable, almost disjoint family~$\mathcal{A}$  of infinite subsets of~$\mathbb{N}$, constructed such that $C_0(K_{\mathcal{A}})$ admits ``few operators''.
\end{itemize}
 Equivalently, this result states that every homomorphism from~$\mathscr{B}(X)$ into a Banach alge\-bra is continuous and has closed range. The key step in our proof is to show that the identity operator on a suitably chosen Banach space factors through every operator in $\mathscr{B}(X)\setminus\mci$  with control over the norms of the operators used in the factorization. These quantitative factorization results may be of independent interest. 
\end{abstract}

\maketitle

\section{Introduction and statement of main results}

\noindent By an \emph{algebra norm} on  a (real or complex) algebra~$\mathscr{A}$, we understand
a norm~$\lVert\,\cdot\,\rVert$ on~$\mathscr{A}$ that is submultiplicative in the sense that $\lVert ab\rVert\le \lVert a\rVert\,\lVert b\rVert$ for every $a,b\in\mathscr{A}$. A~\emph{normed algebra} is an algebra equipped with an algebra norm. We say that a normed algebra $\mathscr{A}$ has a \textit{unique algebra norm} if every algebra norm on~$\mathscr{A}$ is equivalent to the given norm.

In the case where~$\mathscr{A}$ is a Banach algebra (which will be the case for us), this notion should not be confused with the weaker notion of having a unique \emph{complete} algebra norm; see  \Cref{R:ucan} for a more detailed discussion of the difference between  these two notions.

We shall focus  on  the Banach algebra~$\mathscr{B}(X)$ of bounded operators on a Banach space~$X$ and its quotients by closed ideals. One of the earliest results about
this Banach algebra (predating even the term ``Banach algebra'') is due to Eidel\-heit~\cite[Theorem~1]{E}, who showed that~$\mathscr{B}(X)$ has a unique complete algebra norm for every Banach space~$X$. In fact, building on Eidelheit's methods, one can deduce a stronger  conclusion, which we shall state in \Cref{thm:yood} below, once we  have established the necessary terminology.

Many related results have subsequently been obtained; \cite[\S{}0]{Ty2} contains an excellent survey of what was known around the turn of the millenium. We would like to highlight the following results, which provide the most important context for our work:
\begin{enumerate}[label={\normalfont{(\roman*)}}]
\item\label{meyer} Meyer~\cite{M} proved that the Calkin algebra $\mathscr{B}(X)/\mathscr{K}(X)$ has a unique algebra norm for each of the classical sequence spaces $X=c_0$ and $X=\ell_p$, where $1\le p<\infty$. For later reference, we recall the classical result of  Gohberg, Markus and Feldman~\cite{GMF} that the ideal~$\mathscr{K}(X)$ of compact operators is the unique non-trivial closed ideal of~$\mathscr{B}(X)$ for each of these Banach spaces~$X$. 
\item\label{Ware} In his dissertation~\cite{GW}, Ware launched a systematic attack on the uniqueness-of-algebra-norm question for Calkin algebras,  generalizing the aforementioned results of Meyer by showing that the Calkin algebra~$\mathscr{B}(X)/\mathscr{K}(X)$  has a unique algebra norm for a wide range of Banach spaces~$X$, including the following \cite[\S\S{}5.2--5.5]{GW}:
  \begin{itemize}
  \item every finite direct sum of spaces from the family $\{c_0\}\cup\{\ell_p : 1\le p<\infty\};$
  \item the infinite direct sums $\bigl(\bigoplus_{n\in\mathbb{N}}\ell_p^n\bigr)_{c_0}$ and $\bigl(\bigoplus_{n\in\mathbb{N}}\ell_p^n\bigr)_{\ell_q}$ for $1\le p\le\infty$ and $1\le q<\infty;$
  \item the Tsirelson space~$T$ and its dual~$T^*$;
  \item the quasi-reflexive James spaces $J_p$ for $1<p<\infty$.  
  \end{itemize}
\item\label{GW_nonsep}  In a somewhat different direction, Ware \cite[Section~6]{GW} generalized Meyer's results to the non-separable setting as follows. Take an  uncountable index set~$\Gamma$, and set  $X=c_0(\Gamma)$ or $X=\ell_p(\Gamma)$ for some $1\le p<\infty$. Then the quotient $\mathscr{B}(X)/\mathscr{I}$ has a unique algebra norm for every closed ideal~$\mathscr{I}$  of~$\mathscr{B}(X)$. Daws' classification~\cite{DAWS} of the closed ideals of~$\mathscr{B}(X)$ for these Banach spaces~$X$ plays a key role in this work. 
\item\label{JPS} More recently, Johnson and Phillips~\cite{J-IHABB} have answered one of the main questions left open in Ware's dissertation~\cite{GW} by showing that the Calkin al\-ge\-bra $\mathscr{B}(L_p[0,1])/\mathscr{K}(L_p[0,1])$ has a unique algebra norm whenever $1<p<\infty$. Further\-more, for $p\ne2$, they have shown that $\mathscr{B}(L_p[0,1])$ contains a closed ideal~$\mathscr{I}$ for which the quotient  $\mathscr{B}(L_p[0,1])/\mathscr{I}$ admits at least two inequivalent algebra norms.
\end{enumerate}  

Motivated by these results, especially~\ref{meyer}, \ref{GW_nonsep} and~\ref{JPS}, we asked ourselves for which  Banach spaces~$X$ other than~$c_0(\Gamma)$ and $\ell_p(\Gamma)$ for $1\le p<\infty$ is it true that every quotient of~$\mathscr{B}(X)$  by a closed ideal has a unique algebra norm? Our focus has been on the (relatively few) Banach spaces~$X$ for which all of the closed ideals of~$\mathscr{B}(X)$ have been classified. The following theorem summarizes our main findings. 

\begin{theorem}\label{mainthm}
Every quotient of $\mcb(X)$ by one of its closed ideals has a unique algebra norm for each of the following Banach spaces~$X\colon$ 
\begin{enumerate}[label={\normalfont{(\roman*)}}]
\item\label{mainthm1} $X=\bigl(\bigoplus_{n\in\mathbb{N}}\ell_2^n\bigr)_{c_0}$\quad and\quad  $X=\bigl(\bigoplus_{n\in\mathbb{N}}\ell_2^n\bigr)_{\ell_1};$
\item\label{mainthm1.5}  $X = \bigl(\bigoplus_{n\in\mathbb{N}}\ell_2^n\bigr)_{c_0}\oplus c_0(\Gamma)$\quad and\quad $X = \bigl(\bigoplus_{n\in\mathbb{N}}\ell_2^n\bigr)_{\ell_1}\oplus\ell_1(\Gamma)$ \quad for an uncountable in\-dex set~$\Gamma;$ 
\item\label{mainthm2} $X = C_0(K_{\mathcal{A}})$, the Banach space of continuous functions vanishing at infinity on the Mr\'{o}wka space~$K_{\mathcal{A}}$ induced by an uncountable, almost disjoint family~$\mathcal{A}$ of infinite subsets of~$\mathbb{N}$, chosen such that~$C_0(K_{\mathcal{A}})$ admits ``few operators''. 
\end{enumerate}   
\end{theorem}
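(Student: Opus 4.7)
My plan is to establish each case of \Cref{mainthm} by reducing the uniqueness of algebra norm on $\mcb(X)/\mci$ to a quantitative factorization theorem, and then feeding this into an abstract criterion (which I would establish in \S2, along the lines of the Yood-type strengthening of Eidelheit's theorem alluded to in the discussion preceding \Cref{thm:yood}). The convenient target is the equivalent reformulation highlighted in the abstract: that every homomorphism $\phi\colon\mcb(X)\to\A$ into a Banach algebra is continuous and has closed range. Concretely, the goal is to prove that, for each closed ideal $\mci$ of $\mcb(X)$, there exist a Banach space $Y=Y_{\mci}$ and a constant $C>0$ such that whenever $T\in\mcb(X)$ satisfies $\lVert T+\mci\rVert\ge 1$, there exist $A\in\mcb(Y,X)$ and $B\in\mcb(X,Y)$ with $BTA=I_Y$ and $\lVert A\rVert\,\lVert B\rVert\le C$. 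Setting $e:=AB\in\mcb(X)$ then produces an idempotent of norm at most $C$ satisfying $eTe=e$; applying $\phi$ yields $\phi(e)\phi(T)\phi(e)=\phi(e)$, which, once the separating ideal of $\phi$ has been shown to vanish, forces $\phi(T)$ to be quantitatively bounded below in $\A$, giving both continuity and closed range.

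The first ingredient, which is already available in the literature, is the classification of the closed ideal lattice of $\mcb(X)$ in each of the listed cases: for $X=\bigl(\bigoplus_n\ell_2^n\bigr)_{c_0}$ and its dual this follows from work of Laustsen, Loy, Read, Schlumprecht and Zs\'ak; for the direct sums with $c_0(\Gamma)$ or $\ell_1(\Gamma)$ one combines this with Daws' classification invoked by Ware in~\ref{GW_nonsep}, adding a ``$\Gamma$-chain'' of non-separable ideals; and for $X=C_0(K_{\A})$ with the few-operators property the lattice has been made explicit by Koszmider and collaborators. For each closed ideal $\mci$ I would assign a canonical target space $Y_{\mci}$: a Hilbertian space at the level of the compact ideal in the cases of~\ref{mainthm1}, some $c_0(\Gamma')$ or $\ell_1(\Gamma')$ at the non-separable levels of~\ref{mainthm1.5}, and either a finite-dimensional space or a copy of $c_0(\Gamma')$ for~\ref{mainthm2}, depending on whether the relevant operator activates the scalar-multiplier component in the few-operators decomposition.

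The second and most substantial ingredient is the quantitative factorization itself. For the $\bigl(\bigoplus\ell_2^n\bigr)_{c_0}$ family, starting from $T\notin\mci$ with $\lVert T+\mci\rVert\ge 1$, I would first apply a gliding-hump/Ramsey-type selection to the natural direct-sum decomposition of $X$ to extract a sequence of blocks on which $T$ is uniformly bounded below, and then convert this into a factorization of $I_{Y_{\mci}}$ through $T$ with explicit control of $\lVert A\rVert$ and $\lVert B\rVert$; this refines Ware's arguments in~\ref{Ware}, where qualitative factorizations sufficed. For the non-separable cases~\ref{mainthm1.5}, the selection is combined with a transfinite stability argument to preserve an uncountable index set, in the spirit of~\ref{GW_nonsep}. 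For the Mr\'owka case~\ref{mainthm2}, the extraction is combinatorial: one uses the few-operators decomposition to reduce $T$ to a scalar multiplier plus a small remainder, and then passes to an uncountable subfamily $\A'\subseteq\A$ on which the multiplier is bounded away from zero, leveraging the almost-disjointness structure to build $A$ and $B$ with controlled norms.

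The main obstacle, I expect, is precisely the word \emph{quantitative}. The qualitative versions of these factorization statements --- ``the identity on $Y_{\mci}$ factors through every $T\notin\mci$'' --- are often already present, implicitly or explicitly, in the cited ideal-classification work, but they rely on diagonalisation and compactness arguments which give no control over the norms of the auxiliary operators. Upgrading each selection step to produce a uniform bound $\lVert A\rVert\,\lVert B\rVert\le C$ depending only on $\mci$ (and not on $T$) is the genuine technical content; it is particularly delicate in case~\ref{mainthm2}, where the Banach-space structure of $C_0(K_{\A})$ is less classical than that of the sequence spaces, and where the few-operators decomposition itself has a qualitative flavour that must first be made effective before it can be deployed quantitatively.
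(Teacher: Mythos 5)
Your plan for the \emph{incompressibility} half coincides with the paper's route: reduce to a quantitative factorization of an identity operator through every $T\notin\mci$ with uniform norm control, and convert this into a lower bound for injective homomorphisms via non-zero idempotents in the image. However, your claim that the factorization also ``gives both continuity and closed range'' contains a genuine gap. The idempotent argument yields only that \emph{continuous} injective homomorphisms are bounded below; it contributes nothing to the upper bound (maximality of the norm), and the clause ``once the separating ideal of $\phi$ has been shown to vanish'' is exactly the hard half that the factorization does not supply --- as \Cref{C:uniquenormQuot} makes explicit, continuity of all homomorphisms from $\mcb(X)$ is a separate hypothesis that must be verified independently. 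For cases~\ref{mainthm1}--\ref{mainthm1.5} it follows from Johnson's theorem (\Cref{T:BEJ}) because those spaces are isomorphic to their squares; for case~\ref{mainthm2} the space $C_0(K_{\A})$ with few operators \emph{fails} to be isomorphic to its square, Johnson's machinery does not apply, and one must invoke the specific automatic-continuity result of Koszmider--Laustsen. Your proposal never confronts this, and no amount of quantitative factorization will produce it.

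Two further concrete problems. First, your choice of target space ``a Hilbertian space at the level of the compact ideal'' in case~\ref{mainthm1} cannot work: $\bigl(\bigoplus_{n}\ell_2^n\bigr)_{c_0}$ is $c_0$-saturated and $\bigl(\bigoplus_{n}\ell_2^n\bigr)_{\ell_1}$ has the Schur property, so neither contains an isomorphic copy of $\ell_2$, and the identity on an infinite-dimensional Hilbert space factors through no operator on either space (while a finite-dimensional target gives no information beyond $T$ having infinite rank). The correct targets are $c_0$, respectively $\ell_1$, for the Calkin algebras, and $X$ itself for the quotient by $\overline{\mcg}_E(X)$. Second, with $BTA=I_Y$ the element $e=AB$ need not be idempotent (one has $eTe=e$, so it is $(AB)T$ and $T(AB)$ that are idempotent); this is fixable, but together with the fact that every quantitative selection lemma --- which you rightly identify as the technical heart --- is deferred, the proposal as it stands is a programme rather than a proof, and the missing continuity argument in case~\ref{mainthm2} is a gap that the programme cannot close.
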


\noindent 
We refer to \Cref{c0KA} for details of the terminology used in~\ref{mainthm2}. Note that~\ref{mainthm1} can be viewed as a special case of~\ref{mainthm1.5} by allowing the index set~$\Gamma$ to be countable  because \[ \Bigl(\bigoplus_{n\in\mathbb{N}}\ell_2^n\Bigr)_{c_0}\cong\Bigl(\bigoplus_{n\in\mathbb{N}}\ell_2^n\Bigr)_{c_0}\oplus c_0\qquad \text{and}\qquad \Bigl(\bigoplus_{n\in\mathbb{N}}\ell_2^n\Bigr)_{\ell_1}\cong \Bigl(\bigoplus_{n\in\mathbb{N}}\ell_2^n\Bigr)_{\ell_1}\oplus\ell_1. \]

This manuscript is organized as follows. In \Cref{S:prelim}, we introduce some background material and associated terminology, notably concerning maximality, minimality and incompressibility of algebra norms, and we explain the roles that automatic continuity of homomorphisms  and quantitative factorizations of idempotents play in their study. We also give a ``skeleton proof'' of \Cref{mainthm} and  some illustrative examples for context.

\Cref{Section3} contains a quantitative factorization theorem for the identity operator on~$c_0$ that we shall use in the proofs of  \Cref{mainthm}\ref{mainthm1.5} and~\ref{mainthm2}. It is based on theorems of Rosen\-thal and  Dow\-ling--Ran\-drianan\-toa\-nina--Turett.

With these preparations under our belt, we complete the proofs of the three parts of \Cref{mainthm} in Sec\-tions~\ref{c0sum}--\ref{c0KA}. We believe that our approach via  quantitative factorizations of the identity operators on suitably chosen Banach spaces 
may be of interest to researchers studying topics quite different from uniqueness of algebra norm and refer to Theorems~\ref{T:LLR_LSZ}, \ref{P:IFPlargeideals}, \ref{finalCKresult} and \Cref{calkindirectsumincompressible} 
for our main conclusions stated in terms of such factorizations.

Finally, in \Cref{section5} we show that the quantitative factorization route we took to proving \Cref{finalCKresult} is not only sufficient, but also necessary.

\section{Preliminaries, including a skeleton proof of \Cref{mainthm}}\label{S:prelim}
\noindent
All normed spaces and algebras are over  the same scalar field~$\K$,  either the real or the complex numbers. 
As usual, for a locally compact Hausdorff space~$K$, $C_0(K)$ denotes the Banach space of continuous functions $f\colon K\to\K$  which ``vanish at infinity'' in the sense that the set $\{ k\in K: \lvert f(k)\rvert\le\epsilon\}$ is compact for every $\epsilon>0$. 

The term ``operator'' means a bounded linear map between normed spaces.
We write~$B_X$ for  the closed unit ball of a normed space~$X$, while~$I_X$ (or simply~$I$ when~$X$ is clear from the context) denotes the identity operator on~$X$. For a Banach space with a basis and $n\in\N$, $P_n$ will usually denote the $n^{\text{th}}$ basis projection. Other notation will be introduced as and when required.  

Equivalence of norms plays a central role in this investigation. The definition splits naturally in two parts, an upper and a lower bound. This is the motivation behind the following terminology.

\begin{definition}
  Let \mbox{$(\mathscr{A},\lVert\,\cdot\,\rVert)$} be a normed algebra. We say that the given norm  $\lVert\,\cdot\,\rVert$ on~$\mathscr{A}$ is \textit{maximal} if, for every algebra norm~$\viii{\,\cdot\,}$ on $\mathscr{A}$, there is a constant $C_1>0$ such that $\viii{a}\leq C_1\|a\|$ for every $a \in \mathscr{A}$.   Analogously,  we say that
  \mbox{$\lVert\,\cdot\,\rVert$} is \textit{minimal} if, for every algebra norm~$\viii{\,\cdot\,}$ on $\mathscr{A}$,  there is a constant $C_2>0$ such that $\|a\|\leq C_2\viii{a}$ for every $a \in \mathscr{A}$.
  We say that $\mathscr{A}$ has a \textit{unique algebra norm} if the given norm is both maximal and minimal, or in other words if every algebra norm on~$\mathscr{A}$ is  equivalent to the given norm.
\end{definition} 

\begin{remark}  One usually requires that an algebra norm on  a unital algebra~$\mathscr{A}$ must take the value~$1$ at the multiplicative identity~$1_\mathscr{A}$. We shall not adhere to this convention. This will not cause any problems because, given any algebra norm  $\lVert\,\cdot\,\rVert$ on~$\mathscr{A}$,
\[ \viii{a} = \sup\bigl\{\lVert ab\rVert :  b\in\mathscr{A},\,\lVert b\rVert\le 1\bigr\}\qquad (a\in\mathscr{A}) \] 
defines an equivalent algebra norm on~$\mathscr{A}$ which satisfies $\viii{1_\mathscr{A}} =1$. 
\end{remark}

\begin{remark}\label{R:ucan} For a Banach algebra~$\mathscr{A}$, there is a related notion of having a \textit{unique complete algebra norm,} which asserts that every complete algebra norm on~$\mathscr{A}$ is equivalent to the given norm. The Banach Isomorphism Theorem implies that whenever the given norm on a Banach algebra~$\mathscr{A}$ is either minimal or maximal, then it is automatically equivalent to any other complete norm on~$\mathscr{A}$, so~$\mathscr{A}$ has unique complete algebra norm. 
\end{remark}   

It is well-known that maximality of the norm can be rephrased in terms of automatic continuity of homomorphisms, which is one of the oldest topics in the theory of Banach algebras. Here, and elsewhere,   the term ``homomorphism''  means a linear and multiplicative map between two algebras. There are many ways to express this relationship. We have chosen the following, which reflects the applications we have in mind. A much more comprehensive result can be found in \cite[Theorem~6.1.5]{pal}.

\begin{proposition}\label{maximalnorm} The following three conditions are equivalent for a normed algebra $(\mathscr{A},\lVert\,\cdot\,\rVert)\colon$ 
\begin{enumerate}[label={\normalfont{(\alph*)}}]
\item\label{maximalnorm1} The given norm  $\lVert\,\cdot\,\rVert$ on~$\mathscr{A}$ is maximal.
\item\label{maximalnorm2} Every homomorphism from $\mathscr{A}$ into a normed algebra is continuous.
\item\label{maximalnorm3} Every injective homomorphism from~$\mathscr{A}$ into a Banach algebra is continuous.
\end{enumerate}
Now suppose that $(\mathscr{A},\lVert\,\cdot\,\rVert)$ is a Banach algebra. Then conditions~\ref{maximalnorm1}--\ref{maximalnorm3} are equivalent to the following two conditions: 
\begin{enumerate}[label={\normalfont{(\alph*)}}]
\setcounter{enumi}{3}
\item\label{maximalnorm5} The quotient norm on~$\mathscr{A}/\mathscr{I}$ is maximal for every closed ideal~$\mathscr{I}$ of $\mathscr{A}$.
\item\label{maximalnorm4} Every homomorphism from $\mathscr{A}/\mathscr{I}$ into a normed algebra is continuous for every closed ideal~$\mathscr{I}$ of~$\mathscr{A}$.
\end{enumerate}
\end{proposition}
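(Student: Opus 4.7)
The plan is to prove the cycle (a)$\Rightarrow$(b)$\Rightarrow$(c)$\Rightarrow$(a) first, and then to deduce (d) and~(e) in the Banach-algebra case by passing to and from the quotient. All three normed-algebra implications rest on constructing a suitable auxiliary algebra norm.

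For (a)$\Rightarrow$(b), given a homomorphism $\phi\colon\mathscr{A}\to(\mathscr{B},\|\cdot\|_{\mathscr{B}})$ into a normed algebra, the prescription $\viii{a}:=\|a\|+\|\phi(a)\|_{\mathscr{B}}$ defines an algebra norm on~$\mathscr{A}$; the only non-trivial point is submultiplicativity, which follows at once from expanding the product $(\|a\|+\|\phi(a)\|_{\mathscr{B}})(\|b\|+\|\phi(b)\|_{\mathscr{B}})$. Maximality then gives $\|\phi(a)\|_{\mathscr{B}}\le\viii{a}\le C\|a\|$, so $\phi$ is continuous. The implication (b)$\Rightarrow$(c) is trivial. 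For (c)$\Rightarrow$(a), take any algebra norm $\viii{\cdot}$ on~$\mathscr{A}$ and let $\widehat{\mathscr{A}}$ be the completion of $(\mathscr{A},\viii{\cdot})$; since multiplication is $\viii{\cdot}$-continuous it extends uniquely to make $\widehat{\mathscr{A}}$ into a Banach algebra. The canonical inclusion $\iota\colon(\mathscr{A},\|\cdot\|)\to\widehat{\mathscr{A}}$ is then an injective homomorphism into a Banach algebra, so (c) produces a constant $C>0$ with $\viii{a}=\|\iota(a)\|_{\widehat{\mathscr{A}}}\le C\|a\|$ for every $a\in\mathscr{A}$.

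Now suppose $(\mathscr{A},\|\cdot\|)$ is a Banach algebra, and let $q\colon\mathscr{A}\to\mathscr{A}/\mathscr{I}$ denote the quotient map associated with a closed ideal~$\mathscr{I}$. For (b)$\Rightarrow$(d), given an algebra norm $\viii{\cdot}$ on $\mathscr{A}/\mathscr{I}$, the composition of $q$ with the identity $(\mathscr{A}/\mathscr{I},\|\cdot\|_{\mathscr{A}/\mathscr{I}})\to(\mathscr{A}/\mathscr{I},\viii{\cdot})$ is a homomorphism from $\mathscr{A}$ into a normed algebra and hence continuous by~(b); taking the infimum over representatives of a given coset converts the resulting estimate $\viii{q(a)}\le C\|a\|$ into $\viii{\xi}\le C\|\xi\|_{\mathscr{A}/\mathscr{I}}$. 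For (b)$\Rightarrow$(e), compose a given homomorphism out of $\mathscr{A}/\mathscr{I}$ with $q$ and run the same coset-infimum argument. Both (d)$\Rightarrow$(a) and (e)$\Rightarrow$(b) are then immediate from the case $\mathscr{I}=\{0\}$, which closes the two additional cycles. No step in this programme is genuinely hard; the only place where a small subtlety enters is (c)$\Rightarrow$(a), where the completion is needed precisely to ensure that the target of $\iota$ really is a Banach algebra rather than merely normed.
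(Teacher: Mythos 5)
Your proof is correct, and for the equivalence of (a)--(c) it is precisely the standard argument that the paper delegates to \cite[Proposition~2.1.7]{dales}: the auxiliary norm $\viii{a}=\lVert a\rVert+\lVert\phi(a)\rVert_{\mathscr{B}}$ for (a)$\Rightarrow$(b), and the completion of $(\mathscr{A},\viii{\,\cdot\,})$ for (c)$\Rightarrow$(a). The one place where you genuinely depart from the paper is the passage to quotients. The paper asserts that completeness of~$\mathscr{A}$, via the Open Mapping Theorem, is needed for (b)$\Rightarrow$(e), following \cite[Proposition~2.1.5]{dales}, which treats general continuous epimorphisms and must invoke openness of the map. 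Your coset-infimum argument sidesteps this entirely: since the target is literally $\mathscr{A}/\mathscr{I}$ with the quotient norm, the estimate $\lVert\phi(q(a))\rVert\le C\lVert a\rVert$ passes to $\lVert\phi(\xi)\rVert\le C\lVert\xi\rVert_{\mathscr{A}/\mathscr{I}}$ by taking the infimum over representatives, with no appeal to the Open Mapping Theorem. Thus your proof in fact establishes the equivalence of all five conditions for any normed algebra and any closed ideal, which is slightly stronger than what the proposition claims; the hypothesis of completeness is harmless but, on your route, superfluous. All the individual steps check out (in particular, $\viii{\,\cdot\,}$ is submultiplicative, the completion of a normed algebra is a Banach algebra, and the quotient norm by a closed ideal is a genuine algebra norm), and taking $\mathscr{I}=\{0\}$ does close the cycles as you say.
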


\begin{proof} This is standard. For example,   it is easy to adapt the proof of  \cite[Proposition~2.1.7]{dales} to verify that conditions~\ref{maximalnorm1}--\ref{maximalnorm3} are equivalent, and therefore conditions~\ref{maximalnorm5} and~\ref{maximalnorm4} are also equivalent. This part of the proof does not require completeness of~$\mathscr{A}$. Completeness (in the shape of the Open Mapping Theorem) is, however,  required to prove that~\ref{maximalnorm2} implies~\ref{maximalnorm4}; see \cite[Proposition~2.1.5]{dales} for details. Finally, the implication \ref{maximalnorm4}$\Rightarrow$\ref{maximalnorm2} is  trivial.  
\end{proof}

B.~E.~Johnson~\cite{johnson} proved what has become the classical automatic continuity result  for homo\-mor\-phisms from the Banach algebra~$\mathscr{B}(X)$. We state it in a simplified non-technical form, as it will suffice for our purposes.  
\begin{theorem}[Johnson]\label{T:BEJ} 
 Let $X$ be a Banach space which is isomorphic to its square~$X\oplus X$. Then every homomorphism from~$\mathscr{B}(X)$ into a Banach algebra is continuous. 
\end{theorem}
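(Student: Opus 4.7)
The plan is to proceed via the \emph{separating space} of a homomorphism, which is the standard device in automatic continuity. Given a homomorphism $\theta\colon\mcb(X)\to\mcb$ into a Banach algebra~$\mcb$, set
\[ \mathfrak{S}(\theta) = \bigl\{b\in\mcb : \exists\,(T_n)\subset\mcb(X) \text{ with } T_n\to 0 \text{ and } \theta(T_n)\to b\bigr\}. \]
A routine closed-graph argument shows that $\theta$ is continuous if and only if $\mathfrak{S}(\theta)=\{0\}$, so the task reduces to proving the latter.

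First I would establish the elementary stability properties of $\mathfrak{S}(\theta)$: it is a closed linear subspace of~$\mcb$, invariant under left and right multiplication by elements of $\theta(\mcb(X))$; furthermore, a standard Baire-category argument shows that, for each $T\in\mcb(X)$, the maps $b\mapsto\theta(T)b$ and $b\mapsto b\theta(T)$ restrict to continuous operators on $\mathfrak{S}(\theta)$.

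Next I would exploit the hypothesis $X\cong X\oplus X$ to produce a rich supply of mutually disjoint idempotents. Iteratively splitting $X$ via the given isomorphism (equivalently, via the induced isomorphism $\mcb(X)\cong M_2(\mcb(X))$ and repeated halving) yields a sequence $(P_n)_{n\ge 1}$ of pairwise orthogonal idempotents in~$\mcb(X)$ together with algebra isomorphisms $P_n\mcb(X)P_n\cong\mcb(X)$, so that each corner carries a full copy of the original algebra.

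The technical heart, and the step I expect to be the main obstacle, is a gliding-hump / Sinclair-type main-boundedness argument. Supposing for contradiction that $\mathfrak{S}(\theta)\neq\{0\}$, one uses the orthogonal idempotents~$(P_n)$ to transplant a carefully rescaled sequence of operators on~$X$ into the disjoint corners and assemble them, via the orthogonality, into a single bounded operator on~$X$; applying~$\theta$ and using the joint continuity of multiplication by $\theta(P_n)$ on $\mathfrak{S}(\theta)$ then forces the norms in~$\mcb$ to blow up, contradicting the fact that~$\mcb$ is a Banach algebra. Choosing the scalings so that the rescaled operators still tend to zero in~$\mcb(X)$ while the images simultaneously capture enough of a nonzero $b\in\mathfrak{S}(\theta)$ to produce the blow-up is the delicate combinatorial-analytic step that makes the whole argument go through.
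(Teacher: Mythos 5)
The paper does not prove this statement at all: it is quoted as a classical theorem of B.~E.~Johnson with a citation to his 1967 paper, so there is no in-text proof to compare against. Your outline is, however, exactly the standard route to Johnson's theorem (and essentially his original one): reduce continuity to triviality of the separating space $\mathfrak{S}(\theta)$ via the closed graph theorem, use $X\cong X\oplus X$ iteratively to produce pairwise orthogonal idempotents $P_n$ with $P_n\mcb(X)P_n\cong\mcb(X)$, and then invoke a main boundedness (gliding-hump) argument with sequences $(a_n)$, $(b_n)$ satisfying $b_ma_n=0$ for $m\ne n$ and $b_na_n=I_X$, so that continuity of even one of the maps $T\mapsto\theta(b_nTa_n)$ forces continuity of $\theta$ itself via $\theta(T)=\theta(b_n)\theta(a_nTb_n)\theta(a_n)$.

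Two caveats. First, the step you flag as ``the main obstacle'' is indeed the entire content of the theorem: the main boundedness argument --- choosing $x_n$ with $\sum_n\lVert a_nx_nb_n\rVert<\infty$ while $\lVert\theta(b_nx_nb_n a_n)\rVert$ outruns the a priori bound $\lVert\theta(b_n)\rVert^2\lVert\theta(y)\rVert\lVert\theta(a_n)\rVert^2$ coming from $y=\sum_n a_nx_nb_n$ --- is described but not executed, and the circularity (the bound to be beaten depends on $y$, which depends on all the $x_n$) is precisely the delicate point. As written, the proposal is therefore a correct proof \emph{outline} rather than a proof. Second, the claim that ``for each $T$, the maps $b\mapsto\theta(T)b$ restrict to continuous operators on $\mathfrak{S}(\theta)$'' is vacuous: multiplication by a fixed element of a Banach algebra is always bounded. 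What the argument actually needs is the stability lemma for the separating space (that $\mathfrak{S}(\theta)$ is a closed bimodule over $\overline{\theta[\mcb(X)]}$ and that the decreasing chain $\overline{\theta(a_1\cdots a_n)\mathfrak{S}(\theta)}$ eventually stabilizes), which is a different and nontrivial statement.
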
  

\Cref{maximalnorm} has  a counterpart  for minimality. Its statement involves the following notion: a linear map $T\colon X\to Y$ between normed spaces $X$ and $Y$ is \emph{bounded below}  if there is a constant $\eta>0$ such that $\lVert Tx\rVert\ge \eta\lVert x\rVert$ for every $x\in\ X$. If we wish to specify the constant, we say that~$T$ is \emph{bounded below by}~$\eta$.

\begin{lemma}\label{prop:minimality}
Let $(\mathscr{A},\lVert\,\cdot\,\rVert)$ be a normed algebra. Then its norm  $\lVert\,\cdot\,\rVert$ is minimal if and only if every injective homomorphism from~$\mathscr{A}$ into a normed algebra is bounded below.
\end{lemma}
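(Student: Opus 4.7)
The plan is to prove both directions by exploiting the natural bijection between algebra norms on $\mathscr{A}$ and (equivalence classes of) injective homomorphisms out of $\mathscr{A}$ into normed algebras; this is directly analogous to the argument sketched for Proposition 2.3 in the reference to \cite[Proposition~2.1.7]{dales}, and essentially mirrors the proof of the equivalence \ref{maximalnorm1}$\Leftrightarrow$\ref{maximalnorm3} in \Cref{maximalnorm}.

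For the forward implication, I would assume $\lVert\,\cdot\,\rVert$ is minimal and take an injective homomorphism $\phi\colon\mathscr{A}\to\mathscr{B}$ into an arbitrary normed algebra $(\mathscr{B},\lVert\,\cdot\,\rVert_\mathscr{B})$. Define $\viii{a}:=\lVert\phi(a)\rVert_\mathscr{B}$ for $a\in\mathscr{A}$. Injectivity of $\phi$ ensures that $\viii{\,\cdot\,}$ is a norm (not merely a seminorm), linearity of $\phi$ makes it a vector-space norm, and multiplicativity of $\phi$ together with submultiplicativity of $\lVert\,\cdot\,\rVert_\mathscr{B}$ make it submultiplicative, hence an algebra norm on~$\mathscr{A}$. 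Applying the minimality hypothesis, there exists $C_2>0$ such that $\lVert a\rVert\le C_2\viii{a}=C_2\lVert\phi(a)\rVert_\mathscr{B}$, which is precisely the statement that $\phi$ is bounded below (by $1/C_2$).

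For the converse, I would assume every injective homomorphism from $\mathscr{A}$ into a normed algebra is bounded below, and let $\viii{\,\cdot\,}$ be an arbitrary algebra norm on~$\mathscr{A}$. The identity map regarded as $\phi:=\mathrm{id}\colon(\mathscr{A},\lVert\,\cdot\,\rVert)\to(\mathscr{A},\viii{\,\cdot\,})$ is a (trivially injective) homomorphism into a normed algebra, so by hypothesis it is bounded below: there exists $\eta>0$ with $\viii{a}=\viii{\phi(a)}\ge\eta\lVert a\rVert$ for every $a\in\mathscr{A}$, giving $\lVert a\rVert\le(1/\eta)\viii{a}$. Since $\viii{\,\cdot\,}$ was arbitrary, $\lVert\,\cdot\,\rVert$ is minimal.

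There is no real obstacle here: the whole statement is essentially a tautological restatement of the definition of minimality once one recognizes the correspondence between injective homomorphisms and algebra norms. The only points one must verify carefully are that $\viii{\,\cdot\,}=\lVert\phi(\,\cdot\,)\rVert_\mathscr{B}$ is genuinely a norm (which requires injectivity of $\phi$, not merely that $\phi$ has no nonzero kernel modulo some specified subspace) and that no completeness assumption enters anywhere—unlike the maximality counterpart, this equivalence is purely algebraic/metric and does not need the Open Mapping Theorem.
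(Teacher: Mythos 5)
Your proof is correct and is exactly the standard argument the paper has in mind: it omits the details as ``straightforward'' with a reference to Palmer, and your two directions (pulling back the norm along an injective homomorphism, and viewing an arbitrary algebra norm as the target of the identity homomorphism) are precisely that argument. No gaps.
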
  

\begin{proof} Straightforward; we refer to the proof of \cite[Theorem~6.1.5(a)--(b)]{pal} for details. \end{proof}

The following result originates in the work of Eidelheit~\cite[Lemma~1 and Theorem~1]{E}, as already mentioned in the Introduction. It can for instance be found in \cite[Theorem~5.1.14]{dales} or \cite[Theorem, page 107]{pal}.

\begin{theorem}\label{thm:yood}
  Let $X$ be a Banach space, and let~$\mathscr{A}$ be a subalgebra of~$\mathscr{B}(X)$ which contains the ideal of finite-rank operators. Then the operator norm on~$\mathscr{A}$ is minimal.   
\end{theorem}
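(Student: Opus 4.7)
The plan is to invoke \Cref{prop:minimality}, which reduces the task to showing that every injective homomorphism $\phi$ from $\mathscr{A}$ into a normed algebra $\mathscr{B}$ is bounded below. I would exploit the rich supply of rank-one operators in $\mathscr{A}$ as follows. Using the Hahn--Banach theorem, pick $x_0 \in X$ and $f_0 \in X^*$ with $\|x_0\| = \|f_0\| = 1 = f_0(x_0)$, and set $p_0 := x_0 \otimes f_0 \in \mathscr{A}$; this is a norm-one idempotent, so injectivity of $\phi$ makes $\phi(p_0)$ a nonzero idempotent in $\mathscr{B}$, forcing $\|\phi(p_0)\| \ge 1$.

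The heart of the calculation is the identity
\[ (x_0 \otimes f) \, T \, (x \otimes f_0) = f(Tx) \, p_0 \qquad (x \in X,\ f \in X^*,\ T \in \mathscr{A}), \]
which can be verified by direct computation. Applying $\phi$ and taking norms gives
\[ |f(Tx)| \, \|\phi(p_0)\| \le \|\phi(x_0 \otimes f)\| \, \|\phi(T)\| \, \|\phi(x \otimes f_0)\|. \]
Since $\|T\| = \sup\{ |f(Tx)| : x \in B_X, \, f \in B_{X^*}\}$, the desired bound $\|T\| \le C\|\phi(T)\|$ would follow from uniform estimates $\|\phi(x \otimes f_0)\| \le K \|x\|$ and $\|\phi(x_0 \otimes f)\| \le K \|f\|$.

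The main obstacle will be to establish these uniform estimates. The linear maps $L \colon X \to \mathscr{B}$, $x \mapsto \phi(x \otimes f_0)$, and $R \colon X^* \to \mathscr{B}$, $f \mapsto \phi(x_0 \otimes f)$, are intertwiners for the natural $\mathscr{A}$-module structures: indeed, $T(x \otimes f_0) = (Tx) \otimes f_0$ and $(x_0 \otimes f) T = x_0 \otimes (T^* f)$ for every $T \in \mathscr{A}$. Moreover, the representations of $\mathscr{A}$ on $X$ and on $X^*$ are strictly irreducible because $\mathscr{A}$ contains every finite-rank operator. Passing to the completion of $\mathscr{B}$, I would invoke the classical Rickart-type automatic-continuity theorem for intertwiners of strictly irreducible modules (the separating space being a closed invariant submodule, hence trivial by irreducibility) to obtain the required boundedness of $L$ and $R$; the details of such an argument may be found in \cite[Proposition~2.1.7]{dales} or on page~107 of \cite{pal}. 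Inserting these bounds into the displayed inequality then yields $\|T\| \le K^2 \|\phi(T)\|$ for every $T \in \mathscr{A}$, completing the proof.
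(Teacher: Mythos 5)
The paper itself offers no proof of \Cref{thm:yood}; it cites \cite[Theorem~5.1.14]{dales} and \cite[Theorem, page~107]{pal}, so your argument has to stand on its own. Its opening is fine and standard: the reduction via \Cref{prop:minimality}, the identity $(x_0\otimes f)\,T\,(x\otimes f_0)=f(Tx)\,p_0$, and the bound $\lVert\phi(p_0)\rVert\ge 1$ for the non-zero idempotent $\phi(p_0)$ are all correct. The proof breaks down, however, exactly at the point you identify as the main obstacle: the uniform estimates $\lVert\phi(x\otimes f_0)\rVert\le K\lVert x\rVert$ and $\lVert\phi(x_0\otimes f)\rVert\le K\lVert f\rVert$ are \emph{false} in general, so no automatic-continuity argument can establish them. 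For a counterexample, take $\mathscr{A}=\mathscr{F}(X)$, fix a discontinuous linear functional $\psi$ on~$X$, set $\lvert x\rvert_1=\lVert x\rVert+\lvert\psi(x)\rvert$, and define
\[ \nu(T)=\inf\biggl\{\sum_{i=1}^n\lvert x_i\rvert_1\,\lVert f_i\rVert \;:\; T=\sum_{i=1}^n x_i\otimes f_i\biggr\}\qquad(T\in\mathscr{F}(X)). \]
Because $\lvert f(y)\rvert\le\lVert f\rVert\,\lvert y\rvert_1$ and $\bigl(\sum_i x_i\otimes f_i\bigr)\bigl(\sum_j y_j\otimes g_j\bigr)=\sum_{i,j}f_i(y_j)\,x_i\otimes g_j$, the quantity~$\nu$ is submultiplicative, and a routine duality argument shows that it is a norm satisfying $\nu\ge\lVert\cdot\rVert$ and $\nu(x\otimes f_0)=\lvert x\rvert_1\,\lVert f_0\rVert$. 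Hence the identity map $\phi\colon(\mathscr{F}(X),\lVert\cdot\rVert)\to(\mathscr{F}(X),\nu)$ is an injective homomorphism into a normed algebra for which $x\mapsto\lVert\phi(x\otimes f_0)\rVert=\lVert x\rVert+\lvert\psi(x)\rvert$ is unbounded on~$B_X$. (The theorem itself is untouched by this example, since $\nu\ge\lVert\cdot\rVert$; it is only your intermediate claim that fails.)

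There is also a structural flaw in the automatic-continuity argument you propose: the separating space of the intertwiner $L\colon x\mapsto\phi(x\otimes f_0)$ is a closed submodule of the \emph{target} (the completion of~$\mathscr{B}$, regarded as a left module over $\phi(\mathscr{A})$), not of the source~$X$. Strict irreducibility of~$X$ as an $\mathscr{A}$-module therefore says nothing about it, and the target module is certainly not irreducible, so no Rickart-type theorem applies; the counterexample above confirms that none could. Closing the argument requires a genuinely different idea, and the proofs in the cited sources do not proceed via these uniform bounds on rank-one operators.
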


In a forthcoming paper~\cite{J-IHABB}, W.~B.~Johnson and N.~C.~Phillips introduce and explore the following two notions. 

\begin{definition}
  A normed algebra $\mathscr{A}$ is:
\begin{itemize}
\item  \textit{incompressible} if every continuous, injective homo\-mor\-phism from~$\mathscr{A}$ into a normed algebra is bounded below;
\item \textit{uni\-form\-ly in\-com\-pressible} if there is a function $f\colon(0,\infty)\to (0,\infty)$ such that every continuous, injective homomorphism~$\varphi$ from~$\mathscr{A}$ into a  normed alge\-bra is bounded below by~$f(\lVert\varphi\rVert)$.
\end{itemize}
\end{definition}

\Cref{prop:minimality} shows that minimality of the norm implies incompressibility and that the two notions are equivalent if every homomorphism from~$\mathscr{A}$ into a normed algebra is continuous. More precisely, we have  the following result. 
\begin{lemma}\label{Cor:uniquealgnorm}
  A normed algebra $(\mathscr{A},\lVert\,\cdot\,\rVert)$ has a unique algebra norm if and only if~$\mathscr{A}$ is in\-com\-pressible and the given norm~$\lVert\,\cdot\,\rVert$ is maximal. 
\end{lemma}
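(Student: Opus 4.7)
The plan is to unpack both implications using the identity map from $(\mathscr{A},\lVert\,\cdot\,\rVert)$ to $(\mathscr{A},\viii{\,\cdot\,})$ as the test homomorphism that links algebra-norm equivalence to the (in)compressibility machinery. Recall that by definition, a unique algebra norm means the given norm is both maximal and minimal, so the forward implication reduces to showing that minimality of $\lVert\,\cdot\,\rVert$ forces incompressibility. This is immediate from \Cref{prop:minimality}: minimality says that every injective homomorphism from $\mathscr{A}$ into a normed algebra is bounded below, which is a formally stronger condition than incompressibility (the latter only demands the conclusion for \emph{continuous} injective homomorphisms), so incompressibility follows at once.

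For the converse, suppose $\mathscr{A}$ is incompressible and $\lVert\,\cdot\,\rVert$ is maximal; we must show minimality, i.e., that every algebra norm $\viii{\,\cdot\,}$ on $\mathscr{A}$ satisfies $\lVert a\rVert\le C_2\viii{a}$ for some $C_2>0$ and all $a\in\mathscr{A}$. The natural test object is the identity map $\iota\colon(\mathscr{A},\lVert\,\cdot\,\rVert)\to(\mathscr{A},\viii{\,\cdot\,})$, which is an injective algebra homomorphism. By \Cref{maximalnorm}\ref{maximalnorm2}, maximality of $\lVert\,\cdot\,\rVert$ guarantees that $\iota$ is continuous. Incompressibility then delivers a constant $\eta>0$ with $\viii{a}=\viii{\iota(a)}\ge\eta\lVert a\rVert$ for every $a\in\mathscr{A}$, which is exactly the desired lower bound with $C_2=\eta^{-1}$.

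No step should present any real obstacle: both directions are essentially a translation between the language of algebra-norm equivalence and the continuity/boundedness-below dichotomy embodied in \Cref{maximalnorm} and \Cref{prop:minimality}, applied to the identity homomorphism. The only mild subtlety worth flagging is that the hypothesis of incompressibility only controls \emph{continuous} injective homomorphisms, but this is precisely why one must invoke maximality first to secure continuity of $\iota$ before applying incompressibility; without maximality, incompressibility alone would not suffice to conclude minimality.
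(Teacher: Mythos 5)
Your proposal is correct and follows essentially the same route as the paper: the paper simply notes that, by \Cref{maximalnorm}, maximality makes every homomorphism continuous, so that incompressibility and minimality coincide via \Cref{prop:minimality}. Your converse direction merely inlines the standard identity-map argument (which is how the relevant direction of \Cref{prop:minimality} is proved anyway) instead of citing that lemma a second time, so there is no substantive difference.
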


\begin{proof}
  This is immediate from \Cref{prop:minimality} because, by \Cref{maximalnorm},  maximality of the norm~$\lVert\,\cdot\,\rVert$ means that every homomorphism from~$\mathscr{A}$ into a normed algebra is continuous. 
\end{proof}

\begin{corollary}\label{C:uniquenormQuot} The following three conditions are equivalent for a Banach algebra~$\mathscr{A}\colon$
  \begin{enumerate}[label={\normalfont{(\alph*)}}]
    \item\label{C:uniquenormQuot1}  Every quotient algebra of~$\mathscr{A}$ by a closed ideal has a unique algebra norm. 
    \item\label{C:uniquenormQuot2}  Every homomorphism from~$\mathscr{A}$ into a Banach algebra is continuous and has closed range.  
    \item\label{C:uniquenormQuot3}   Every homomorphism from~$\mathscr{A}$ into a Banach algebra is continuous, and~$\mathscr{A}/\mathscr{I}$ is incompressible for every  closed ideal~$\mathscr{I}$ of~$\mathscr{A}$.  
\end{enumerate}      
\end{corollary}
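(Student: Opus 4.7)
The plan is to prove the cycle (a)$\Rightarrow$(b)$\Rightarrow$(c)$\Rightarrow$(a) by combining \Cref{maximalnorm}, which converts continuity of homomorphisms into maximality of (quotient) norms, with \Cref{Cor:uniquealgnorm}, which decomposes uniqueness of algebra norm into maximality plus incompressibility.

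For (a)$\Rightarrow$(b), I would apply \Cref{Cor:uniquealgnorm} to each quotient $\mathscr{A}/\mathscr{I}$ to obtain both maximality of its norm and incompressibility. The case $\mathscr{I}=\{0\}$ together with \Cref{maximalnorm} makes every homomorphism $\varphi\colon\mathscr{A}\to\mathscr{B}$ into a Banach algebra continuous; then $\ker\varphi$ is closed and $\varphi$ factors through an injective continuous homomorphism $\bar\varphi\colon\mathscr{A}/\ker\varphi\to\mathscr{B}$. Incompressibility of $\mathscr{A}/\ker\varphi$ forces $\bar\varphi$ to be bounded below, whence (by completeness of $\mathscr{A}/\ker\varphi$) its range---which coincides with that of $\varphi$---is closed. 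Conversely, for (c)$\Rightarrow$(a), the continuity clause of (c) is equivalent via \Cref{maximalnorm} to maximality of the quotient norm on every $\mathscr{A}/\mathscr{I}$; combining this with the assumed incompressibility of each $\mathscr{A}/\mathscr{I}$, \Cref{Cor:uniquealgnorm} delivers uniqueness of algebra norm on each quotient.

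The step requiring a small twist is (b)$\Rightarrow$(c). The continuity clause is inherited verbatim from (b); what needs work is that incompressibility of $\mathscr{A}/\mathscr{I}$ concerns continuous injective homomorphisms into general \emph{normed} algebras, whereas (b) only controls homomorphisms into \emph{Banach} algebras. Given a continuous injective homomorphism $\psi\colon\mathscr{A}/\mathscr{I}\to\mathscr{C}$ with $\mathscr{C}$ a normed algebra, I would pass to the completion $\widehat{\mathscr{C}}$ (a Banach algebra containing $\mathscr{C}$ isometrically) and compose $\psi$ with the quotient map to obtain a homomorphism $\mathscr{A}\to\widehat{\mathscr{C}}$. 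Hypothesis (b) supplies closed, hence Banach, range, so the Open Mapping Theorem produces a lower bound for the induced continuous bijection from $\mathscr{A}/\mathscr{I}$ onto this range, and this bound transfers back to $\psi$ via the isometric embedding $\mathscr{C}\hookrightarrow\widehat{\mathscr{C}}$. This completion argument is the only place where the Banach-versus-normed distinction does any real work; the remainder is a short formal chase through the earlier equivalences, so no genuine obstacle is anticipated.
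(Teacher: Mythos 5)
Your proposal is correct and follows essentially the same route as the paper: the same cycle (a)$\Rightarrow$(b)$\Rightarrow$(c)$\Rightarrow$(a), with (a)$\Rightarrow$(b) via continuity from maximality plus boundedness below of the induced injection on $\mathscr{A}/\ker\varphi$, and (b)$\Rightarrow$(c) via exactly the completion-and-quotient-map composition the paper uses. The only cosmetic difference is that in (a)$\Rightarrow$(b) you invoke incompressibility through \Cref{Cor:uniquealgnorm} where the paper invokes minimality through \Cref{prop:minimality}; these amount to the same thing.
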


\begin{proof}
\ref{C:uniquenormQuot1}$\Rightarrow$\ref{C:uniquenormQuot2}. Let  $\varphi\colon\mathscr{A}\to\mathscr{B}$ be a  homomorphism into a Banach algebra~$\mathscr{B}$. 
 \Cref{maximalnorm}  shows that~$\varphi$ is continuous because   the norm on~$\mathscr{A}$ is maximal.
  In particular, the ideal $\ker\varphi$ is closed, and the quotient norm on $\mathscr{A}/\ker\varphi$ is minimal by hypothesis.
  Let  $\widetilde{\varphi}\colon \mathscr{A}/\ker\varphi\to\mathscr{B}$ be  the induced  homomorphism given by $\widetilde{\varphi}(a+\ker\varphi) = \varphi(a)$; it satisfies:
 \begin{enumerate}[label={\normalfont{(\roman*)}}] 
\item\label{C:uniquenormQuot:i} $\widetilde{\varphi}[\mathscr{A}/\ker\varphi] =   \varphi[\mathscr{A}]$,
\item\label{C:uniquenormQuot:ii} $\widetilde{\varphi}$  is  injective and therefore  bounded below by \Cref{prop:minimality},
  \item  $\widetilde{\varphi}$  is continuous, hence it has closed range by~\ref{C:uniquenormQuot:ii}, and therefore so does $\varphi$ by~\ref{C:uniquenormQuot:i}.
\end{enumerate}

 \ref{C:uniquenormQuot2}$\Rightarrow$\ref{C:uniquenormQuot3}. Let $\mathscr{I}$ be a closed ideal of $\mathscr{A}$, and consider a continuous, injective  homo\-mor\-phism $\varphi\colon\mathscr{A}/\mathscr{I}\to\mathscr{B}$ into a normed algebra~$\mathscr{B}$. By hypothesis, the composite homomorphism $\iota\varphi\pi\colon\mathscr{A}\to\widehat{\mathscr{B}}$ has closed range, where $\pi\colon\mathscr{A}\to\mathscr{A}/\mathscr{I}$ is  the quotient map and $\iota\colon\mathscr{B}\to\widehat{\mathscr{B}}$ is the isometric embedding of~$\mathscr{B}$ into its completion~$\widehat{\mathscr{B}}$. It follows that the injection~$\iota\varphi$ has closed range because~$\pi$ is surjective, so~$\iota\varphi$ is  bounded below, and  therefore the  same is true for~$\varphi$.

 \ref{C:uniquenormQuot3}$\Rightarrow$\ref{C:uniquenormQuot1}. This follows from \Cref{maximalnorm} and \Cref{Cor:uniquealgnorm}.
\end{proof}    

We can now outline how we shall prove \Cref{mainthm}.
\begin{proof}[Skeleton proof of Theorem~{\normalfont{\ref{mainthm}}}] 
According to \Cref{C:uniquenormQuot}, we must show that each of the Banach spaces~$X$ listed in clauses~\ref{mainthm1}--\ref{mainthm2} of \Cref{mainthm} satisfies: 
 \begin{enumerate}[label={\normalfont{(\arabic*)}}] 
 \item\label{Step1}  every homomorphism from~$\mathscr{B}(X)$ into a Banach algebra is continuous;
 \item\label{Step2} $\mathscr{B}(X)/\mathscr{I}$ is incompressible for every  closed ideal~$\mathscr{I}$ of~$\mathscr{B}(X)$.
 \end{enumerate}
 In each case, \ref{Step1} is already known to hold true.  This follows from Johnson's result stated in \Cref{T:BEJ} above for the Banach spaces listed in clauses~\ref{mainthm1}--\ref{mainthm1.5} because they are all isomorphic to their squares, while \cite[Corollary~39]{KL} verifies it for the particular Banach space $X=C_0(K_{\mathcal{A}})$ considered in clause~\ref{mainthm2}, which fails to be isomorphic to its square.
 
Considering~\ref{Step2}, we may clearly suppose that the ideal~$\mathscr{I}$ is proper, and in view of \Cref{thm:yood},  we may also suppose that~$\mathscr{I}$ is non-zero. Hence, it remains to show that
 \begin{equation}\label{Proof:todo} 
 \mathscr{B}(X)/\mathscr{I}\ \text{is incompressible for every  non-trivial closed ideal}\ \mathscr{I}\ \text{of}\ \mathscr{B}(X),
 \end{equation}
which we shall  do in Sections~\ref{c0sum}--\ref{c0KA}. 
\end{proof} 

When verifying~\eqref{Proof:todo}, we shall follow the lead of Johnson and Phillips~\cite{J-IHABB} and use ``quan\-ti\-ta\-tive factorizations'' of idempotent operators to deduce  that the quotient algebra $\mathscr{B}(X)/\mathscr{I}$ is uniformly incompressible. This is a natural tool to bring to bear on the problem at hand because factorizations of idempotent operators already play a key role in the classifications of the closed ideals of~$\mathscr{B}(X)$ for each of the Banach spaces~$X$ we consider. The information we need to add to these results is that the factorizations can be carried out  with uniform norm bounds on the auxiliary operators. We believe that the results we obtain in this regard will be of interest to researchers working on problems very different from uniqueness of algebra norm. 
The following notion is at the heart of our approach.

\begin{definition}\label{D:IFP} Let  $C\ge 1$.
   A normed algebra~$\mathscr{A}$ has the \emph{idempotent factorization property with constant}~$C$ (abbreviated $C$-IFP) if, for every $a\in\mathscr{A}$ of norm~$1$, there are $b,c \in \mathscr{A}$ with $\lVert b\rVert\,\lVert c\rVert\leq C$  such that $bac$ is a non-zero idempotent.
\end{definition}

It is sometimes more convenient to use the following equivalent formulation of this definition: $\mathscr{A}$ has the $C$-IFP if, for every $a\in\mathscr{A}\setminus\{0\}$, there are $b,c \in \mathscr{A}$ with $\lVert b\rVert\,\lVert c\rVert\leq C/\lVert a\rVert$  such that $bac$ is a non-zero idempotent.

Using this terminology, we can express the basic observation of Johnson and Phillips \cite{J-IHABB} as follows; see also~\cite{Me2} for an earlier related, more abstract result. We include a short proof for ease of reference.

\begin{proposition}\label{P:JPSlemma0.2}
    Every normed algebra which has the $C$-IFP for some constant $C\ge 1$ is  uniformly incompressible.
\end{proposition}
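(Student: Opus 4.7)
The plan is to exhibit an explicit function $f$ that witnesses uniform incompressibility, built directly from the constant $C$ in the hypothesis. Let $\varphi\colon\mathscr{A}\to\mathscr{B}$ be an arbitrary continuous, injective homomorphism into a normed algebra~$\mathscr{B}$; I aim to show that $\varphi$ is bounded below by $1/(C\lVert\varphi\rVert^{2})$.

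The key observation is the elementary but essential fact that any non-zero idempotent $q$ in a normed algebra satisfies $\lVert q\rVert\ge 1$, since $\lVert q\rVert=\lVert q^{2}\rVert\le\lVert q\rVert^{2}$. This will provide the lower bound we need once we transport the idempotent factorization supplied by the $C$-IFP through~$\varphi$.

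Given $a\in\mathscr{A}$ of norm~$1$, the $C$-IFP yields $b,c\in\mathscr{A}$ with $\lVert b\rVert\,\lVert c\rVert\le C$ such that $p:=bac$ is a non-zero idempotent. Applying~$\varphi$ and using that $\varphi$ is an injective homomorphism, $\varphi(p)=\varphi(b)\varphi(a)\varphi(c)$ is again a non-zero idempotent in~$\mathscr{B}$. Combining the inequality $\lVert\varphi(p)\rVert\ge 1$ with submultiplicativity and continuity of~$\varphi$, I then estimate
\[ 1\le \lVert\varphi(b)\rVert\,\lVert\varphi(a)\rVert\,\lVert\varphi(c)\rVert \le \lVert\varphi\rVert^{2}\,\lVert b\rVert\,\lVert c\rVert\,\lVert\varphi(a)\rVert \le C\lVert\varphi\rVert^{2}\,\lVert\varphi(a)\rVert, \]
which rearranges to $\lVert\varphi(a)\rVert\ge 1/(C\lVert\varphi\rVert^{2})$. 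By homogeneity, this extends to every $a\in\mathscr{A}$ with the factor $\lVert a\rVert$ restored on the right-hand side, so $\varphi$ is bounded below by $1/(C\lVert\varphi\rVert^{2})$. Setting $f(t):=1/(Ct^{2})$ for $t>0$ produces a function independent of $\varphi$ and~$\mathscr{B}$, establishing uniform incompressibility.

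There is no serious obstacle here; the only ingredient one must be slightly careful about is confirming that $\varphi(p)\ne 0$, which is where the injectivity hypothesis in the definition of (uniform) incompressibility is used. Everything else is a direct two-line calculation from submultiplicativity and the idempotent norm bound.
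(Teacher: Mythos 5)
Your proof is correct and follows essentially the same route as the paper: transport the idempotent $p=bac$ through $\varphi$, use $\lVert\varphi(p)\rVert\ge 1$ for the non-zero idempotent $\varphi(p)$, and rearrange to get the lower bound $1/(C\lVert\varphi\rVert^{2})$, yielding $f(t)=1/(Ct^{2})$. The only cosmetic difference is that you work with $\lVert a\rVert=1$ and invoke homogeneity, whereas the paper uses the equivalent reformulation of the $C$-IFP for arbitrary non-zero $a$ directly.
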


\begin{proof} Suppose that $\mathscr{A}$ is a normed algebra with the $C$-IFP for some $C\ge1$, and let $\varphi\colon\mathscr{A}\to\mathscr{B}$ be a continuous, injective homomorphism into some normed algebra~$\mathscr{B}$. For every $a\in\mathscr{A}\setminus\{0\}$, we can find $b,c\in\mathscr{A}$ with $\lVert b\rVert\,\lVert c\rVert\leq C/\lVert a\rVert$  such that $p=bac$ is a non-zero idempotent. Then $\varphi(p)$ is a non-zero idempotent in~$\mathscr{B}$, so
\[ 1\le \lVert\varphi(p)\rVert\le \lVert \varphi(b)\rVert\,\lVert \varphi(a)\rVert\,\lVert \varphi(c)\rVert\leq \lVert\varphi\rVert^2\,\frac{C}{\lVert a\rVert}\,\lVert \varphi(a)\rVert. \] 
Rearranging this inequality, we see that~$\varphi$ is bounded below by $1/(C\lVert\varphi\rVert^2)$, and consequently 
the definition of  uniform incompressibility is satisfied with respect to the function $f\colon(0,\infty)\to(0,\infty)$ given by $f(t) = 1/(Ct^2)$. 
\end{proof}

We shall usually apply a variant of \Cref{P:JPSlemma0.2}, adapted to the context of operator ideals. According to Pietsch~\cite{pie}, an \textit{operator ideal} is an assignment $\mathscr{J}$ which designates to every pair~$(X,Y)$ of Banach spaces a subspace~$\mathscr{J}(X,Y)$ of~$\mcb(X,Y)$ satisfying:
\begin{itemize}
    \item $\mathscr{J}(X,Y) \neq \{0\}$ for some Banach spaces $X$ and $Y$;
    \item $UTS\in\mathscr{J}(W,Z)$ whenever $W$, $X$, $Y$ and~$Z$ are Banach spaces and $S\in \mcb(W,X)$, $T \in \mathscr{J}(X,Y)$ and $U \in \mcb(Y,Z)$ are operators. 
\end{itemize}
An operator ideal~$\mathscr{J}$ is \emph{closed} if  the subspace~$\mathscr{J}(X,Y)$ is closed in the operator norm for every pair of Banach spaces~$(X,Y)$. As usual, we abbreviate~$\mathscr{J}(X,X)$ to~$\mathscr{J}(X)$. 

\begin{lemma}\label{Factorisation}
  Let $X$ be a Banach space and~$\mathscr{J}$ a closed operator ideal. Suppose that there exists a constant $C\ge 1$ such that, for every $T\in\mcb(X)\setminus\mathscr{J}(X)$ with $\lVert T+\mathscr{J}(X)\rVert = 1$, there exist a Banach space~$Y$ and operators $R\in \mcb(X,Y)$ and $S\in \mcb(Y,X)$ with  $\lVert R\rVert\,\lVert S\rVert\leq C$ such that  $RTS+\mathscr{J}(Y)$ is a non-zero idempotent in~$\mcb(Y)/\mathscr{J}(Y)$. Then $\mcb(X)/\mathscr{J}(X)$ has the $C^2$-IFP and is therefore uniformly incompressible.
\end{lemma}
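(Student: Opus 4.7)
The goal is to establish the $C^2$-IFP for $\mcb(X)/\mathscr{J}(X)$; the uniform incompressibility then follows immediately from \Cref{P:JPSlemma0.2}. Given $\bar T := T + \mathscr{J}(X)$ of norm one, I apply the hypothesis to obtain a Banach space~$Y$ and operators $R \in \mcb(X, Y)$, $S \in \mcb(Y, X)$ with $\|R\|\,\|S\| \le C$ such that $P := RTS \in \mcb(Y)$ satisfies $P^2 - P \in \mathscr{J}(Y)$ while $P \notin \mathscr{J}(Y)$.

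My choice will be $B := SR$ and $C := SRT$ in $\mcb(X)$, so that $BTC = (SR)\,T\,(SRT) = (SRT)^2 = S(RTS) RT = SPRT$. Submultiplicativity of the quotient norm gives $\|B + \mathscr{J}(X)\| \le \|SR\| \le C$ and $\|C + \mathscr{J}(X)\| \le \|SR + \mathscr{J}(X)\|\,\|\bar T\| \le C$, so the required product bound $\le C^2$ holds. For idempotency, the identity $(SRT)^m = S\,P^{m-1}\, RT$ in $\mcb(X)$ (immediate induction on $m$) combined with $P^n \equiv P \pmod{\mathscr{J}(Y)}$ for every $n \ge 1$ (iterating $P^2 \equiv P$) and the operator-ideal property of $\mathscr{J}$ yield $(SRT)^m \equiv (SRT)^2 \pmod{\mathscr{J}(X)}$ for every $m \ge 2$; in particular $\bigl((SRT)^2\bigr)^2 = (SRT)^4 \equiv (SRT)^2$, confirming that $(SRT)^2 + \mathscr{J}(X)$ is an idempotent.

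The main obstacle is to show $(SRT)^2 \notin \mathscr{J}(X)$, so that the idempotent is truly non-zero. The first reflex is to suppose the contrary and sandwich by $R$ on the left and $S$ on the right: by the operator-ideal property, $R \cdot SPRT \cdot S = (RS)\,P^2 \equiv (RS)\,P$ lies in $\mathscr{J}(Y)$, reducing the task to deriving a contradiction from $(RS)\,P \in \mathscr{J}(Y)$ together with $P \notin \mathscr{J}(Y)$. I expect the cleanest route to be a preliminary refinement of the factorization data: lift the idempotent $p = P + \mathscr{J}(Y)$ to an honest idempotent $E \in \mcb(Y)$, restrict to $Y' := E[Y]$ with inclusion $\iota\colon Y' \hookrightarrow Y$ and projection $\pi\colon Y \twoheadrightarrow Y'$, and replace $(R,S)$ by $(R', S') := (\pi R, S \iota)$, so that $R'TS' \equiv I_{Y'}$ modulo $\mathscr{J}(Y')$ (a genuinely stronger condition than mere idempotency). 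In this sharpened setting the non-vanishing argument closes cleanly: the analogous sandwich forces $R'S' \in \mathscr{J}(Y')$ and hence $I_{Y'} = R'TS' - (R'TS' - I_{Y'}) \in \mathscr{J}(Y')$, contradicting the non-vanishing of $p$. The delicate technical point I expect to require the most care is carrying out this lifting/complementation step without inflating the product $\|R\|\,\|S\|$ beyond the original bound~$C$, so that the final IFP constant really is $C^2$ and not some larger quantity.
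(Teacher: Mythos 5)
Your setup---taking $b = SR + \mathscr{J}(X)$ and $c = SRT + \mathscr{J}(X)$, the norm bounds via the quotient norm, and the verification that $(SRT)^2 + \mathscr{J}(X)$ is idempotent---is correct and coincides, up to the harmless transposition of $(SRT)^2$ into $(TSR)^2$, with the paper's argument. The gap is exactly where you locate it: the non-vanishing of $(SRT)^2$ modulo $\mathscr{J}(X)$. Your proposed repair via lifting the idempotent $p = P + \mathscr{J}(Y)$ to an honest idempotent $E \in \mathscr{B}(Y)$ does not work at this level of generality. For an arbitrary closed operator ideal $\mathscr{J}$ there is no guarantee that idempotents lift from $\mathscr{B}(Y)/\mathscr{J}(Y)$ (the paper invokes Barnes's lifting result only for the ideal of compact operators), and even when a lift exists its norm $\lVert E\rVert$ is uncontrolled, so replacing $(R,S)$ by $(\pi R, S\iota)$ multiplies the bound $\lVert R\rVert\,\lVert S\rVert\le C$ by $\lVert E\rVert$ and destroys the constant $C^2$---precisely the point you flag as unresolved. (Your closing deduction in the sharpened setting is also off: from $R'S'\in\mathscr{J}(Y')$ one cannot conclude $I_{Y'}\in\mathscr{J}(Y')$; the sandwich that works there is $R'T(\,\cdot\,)S'$, not $R'(\,\cdot\,)S'$.)

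The fix is elementary and needs no lifting: sandwich with $RT$ on the left rather than with $R$. Indeed, $RT(SRT)^2S = (RTS)^3 = P^3$, and $P^3 - P = (I_Y + P)(P^2 - P)\in\mathscr{J}(Y)$, so $P^3\notin\mathscr{J}(Y)$ because $P\notin\mathscr{J}(Y)$. Hence $(SRT)^2\in\mathscr{J}(X)$ would force $P^3\in\mathscr{J}(Y)$ by the operator-ideal property, a contradiction. This is exactly how the paper concludes, working with $Q=(TSR)^2$ and the identity $RQTS=P^3$.
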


\begin{proof} If $\mathscr{J}(X)=\mcb(X)$, there is nothing to prove, so suppose that the closed ideal~$\mathscr{J}(X)$ is proper, and choose $C\ge 1$ as specified. Then, for every $t=T+\mathscr{J}(X)\in \mcb(X)/\mathscr{J}(X)$ with $\lVert t\rVert = 1$, we can find a Banach space~$Y$ and operators $R\in \mcb(X,Y)$ and $S\in \mcb(Y,X)$ with $\lVert R\rVert\,\lVert S\rVert\leq C$ such that $P+\mathscr{J}(Y)$ is a non-zero idempotent, where  $P=RTS\in\mcb(Y)$.  Then $P\notin\mathscr{J}(Y)$,  and we have   
$P^3-P = (I_Y+P)(P^2-P)\in\mathscr{J}(Y)$,
so the operator $Q= (TSR)^2\in\mcb(X)$ satisfies
\[ RQTS = P^3 \notin\mathscr{J}(Y)\qquad\text{and}\qquad Q^2 - Q = TS(P^3-P)R\in\mathscr{J}(X). \] 
It follows that  $q=Q+\mathscr{J}(X)$ is a non-zero idempotent in~$\mcb(X)/\mathscr{J}(X)$. Therefore its image under the  injective homomorphism~$\varphi$ is also a non-zero idempotent, which implies that $\lVert\varphi(q)\rVert\ge 1$. Set $u=SR+\mathscr{J}(X)\in\mcb(X)/\mathscr{J}(X)$, and observe that \mbox{$\lVert u\rVert\le \lVert S\rVert\,\lVert R\rVert\le C$} and $q=(tu)^2$. Combining these facts, we conclude that~$\mcb(X)/\mathscr{J}(X)$ has the $C^2$-IFP.
\end{proof}

\begin{remark}\label{R:factoringID}
    In our applications of \Cref{Factorisation}, a stronger hypothesis will be satisfied: we can always take the Banach space~$Y$ and the operators~$R$ and~$S$ with  $\lVert R\rVert\,\lVert S\rVert\leq C$ such that the non-zero idempotent $RTS+\mathscr{J}(Y)$ is equal to $I_Y+\mathscr{J}(Y)$.  In this case we can draw the stronger conclusion that $\mcb(X)/\mathscr{J}(X)$ has the $C$-IFP. The reason is that, in the notation already introduced in the proof, the  hypothesis that $I_Y - RTS\in \mathscr{J}(Y)$ implies that 
    \[ q = (tu)^2 = TSRTSR + \mathscr{J}(X) = TSI_YR + \mathscr{J}(X) = tu, \]  
   and $\lVert u\rVert\le C$ as before.
\end{remark}

Suppose that the operator ideal~$\mathscr{J}$ is the ideal of compact operators, so that we consider the Calkin algebra $\mathscr{B}(X)/\mathscr{K}(X)$ of~$X$. A result of Barnes \cite[Lemma~1, including the remark following it]{BB} implies that idempotents lift from the Calkin algebra to~$\mathscr{B}(X)$. Our next lemma will allow us to establish quantitative versions of this result in certain cases. More precisely, under the hypothesis of \Cref{R:factoringID} that we can factor $I_Y+\mathscr{K}(Y)$ through $T+\mathscr{K}(X)$ with control over the norms of the factors, it turns out that we can lift the factorization to obtain an analogous quantitative factorization for the operator~$T$ itself, provided that the Banach space~$Y$ is sufficiently ``nice'' (which it will be in our applications, where $Y=c_0$ or $Y=\ell_1$; see also \Cref{R:perturb} for how to handle some other cases). As well as formally strengthening our conclusions, we believe that presenting them in this way will make them more accessible to researchers from other fields. 

\begin{lemma}\label{L:perturb}
Let $Y$ be a Banach space with a normalized, bimonotone basis~$(b_n)_{n\in\N}$, and suppose that 
\begin{equation}\label{L:perturb:eq1} \biggl\lVert\sum_{n=1}^m \alpha_n b_n\biggr\rVert = \biggl\lVert\sum_{n=1}^m \alpha_n b_{n+1}\biggr\rVert\qquad (m\in\N,\, \alpha_1,\ldots,\alpha_m\in\K). \end{equation}
Then, for every $C>1$ and every operator $T\in\mathscr{B}(Y)$ such that $I_Y-T$ is compact, 
there are operators $U,V\in\mathscr{B}(Y)$ with $\lVert U\rVert\,\lVert V\rVert\le C$ such that $UTV = I_Y$. 
\end{lemma}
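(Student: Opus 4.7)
The plan is to conjugate $T$ by iterated shifts so that it becomes a small perturbation of $I_Y$, which can then be inverted by a Neumann series.

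Define the right shift $S \in \mathscr{B}(Y)$ by $Sb_n := b_{n+1}$ and extend linearly; the shift condition \eqref{L:perturb:eq1} guarantees that $S$ is an isometry on $Y$. Define the left shift $L \in \mathscr{B}(Y)$ by $Lb_1 := 0$ and $Lb_{n+1} := b_n$; combining \eqref{L:perturb:eq1} with bimonotonicity of the basis shows that $L$ is a contraction satisfying $LS = I_Y$. Iterating gives $\lVert L^n\rVert = \lVert S^n\rVert = 1$ and $L^n S^n = I_Y$, and applying \eqref{L:perturb:eq1} $n$ times yields
\[
\lVert L^n x \rVert = \biggl\lVert \sum_{m=1}^\infty \alpha_{m+n} b_m \biggr\rVert = \biggl\lVert \sum_{m=1}^\infty \alpha_{m+n} b_{m+n} \biggr\rVert = \lVert (I_Y - P_n) x \rVert \qquad \Bigl(x = \textstyle\sum_k \alpha_k b_k \in Y\Bigr).
\]
Since $(b_n)$ is a basis, $(I_Y - P_n)x \to 0$ for every $x$, and hence $L^n \to 0$ in the strong operator topology.

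Setting $K := T - I_Y$, which is compact by hypothesis, the key identity is
\[
L^n T S^n = L^n S^n + L^n K S^n = I_Y + L^n K S^n,
\]
so $\lVert L^n T S^n - I_Y \rVert \le \lVert L^n K\rVert$. Since $K(B_Y)$ is relatively compact and strong convergence of operators is uniform on relatively compact sets, $\lVert L^n K\rVert = \sup_{y \in K(B_Y)} \lVert L^n y\rVert \to 0$ as $n \to \infty$. Given $C > 1$, I would choose $n$ large enough that $\lVert L^n T S^n - I_Y\rVert \le 1 - 1/C$. Then $L^n T S^n$ is invertible by the Neumann series with inverse of norm at most $C$, and setting $U := (L^n T S^n)^{-1} L^n$ and $V := S^n$ yields $UTV = I_Y$ with $\lVert U\rVert\,\lVert V\rVert \le C$, as desired.

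The main subtlety is the asymmetry of the construction: one must apply the ``killing'' shift on the left. The naive variant using $\lVert K S^n\rVert$ can fail because $S^n$ is an isometry and $(S^n)^*$ need not converge strongly to $0$ on $Y^*$ (e.g.\ for $Y = \ell_1$, whose predual basis is not shrinking), whereas $L^n \to 0$ strongly is immediate from the basis property. This is precisely why the argument works in the generality stated, covering both $Y = c_0$ and $Y = \ell_1$.
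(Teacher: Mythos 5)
Your proposal is correct and follows essentially the same route as the paper: construct the shift operators from~\eqref{L:perturb:eq1} and bimonotonicity, conjugate $T$ to $L^nTR^n = I_Y + L^nKR^n$, make the perturbation small using compactness of $K$, and invert by the Neumann series with the same choice of $U$ and $V$. The only (immaterial) difference is that you justify $\lVert L^nK\rVert\to 0$ via uniform strong convergence on the relatively compact set $K(B_Y)$, whereas the paper uses the identity $\lVert L^nK\rVert = \lVert(I_Y-P_n)K\rVert$ together with $P_nK\to K$ in norm.
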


\begin{proof} Equation~\eqref{L:perturb:eq1} shows that the linear map~$R$ given by $Rb_n = b_{n+1}$ for $n\in\N$ is an isometry. Now consider the linear map~$L$ given by $Lb_1=0$ and $Lb_{n+1}=b_n$ for $n\in\N$. It satisfies 
  \begin{align*} \biggl\lVert L\Bigl(\sum_{n=1}^{m+1} \alpha_n b_n\Bigr)\biggr\rVert = \biggl\lVert\sum_{n=1}^m \alpha_{n+1} b_n\biggr\rVert
   &=  \biggl\lVert\sum_{n=1}^m \alpha_{n+1} b_{n+1}\biggr\rVert = \biggl\lVert(I_Y-P_1)\Bigl(\sum_{n=1}^{m+1} \alpha_nb_n\Bigr)\biggr\rVert\\ &\le \biggl\lVert \sum_{n=1}^{m+1} \alpha_n b_n\biggr\rVert\qquad (m\in\N,\,\alpha_1,\ldots,\alpha_{m+1}\in\K) \end{align*}
by~\eqref{L:perturb:eq1} and bimonotonicity. Hence~$Y$ admits shift operators $R,L\in\mathscr{B}(Y)$, both having norm~$1$, and  
\begin{equation}\label{L:perturb:eq2} L^nR^n = I_Y\qquad\text{and}\qquad R^nL^n = I_Y-P_n\qquad (n\in\N). \end{equation}

Let $C>1$, and suppose that $S =I_Y-T$ is compact. Then  $P_nS\to S$ as $n\to\infty$, so we can find $n\in\N$ such that $\lVert(I_Y - P_n)S\rVert\le 1-\frac1{C}$. Combining this with~\eqref{L:perturb:eq2}, we obtain
\begin{align*} 
\lVert I_Y - L^n T R^n\rVert &= \lVert L^n(I_Y - T)R^n\rVert = \lVert L^n S R^n\rVert\\ &\le \lVert L^n S\rVert = \lVert R^nL^n S\rVert = \lVert (I_Y-P_n)S \rVert\le 1 - \frac{1}{C}.
\end{align*}
This implies that the operator $L^nTR^n$ is invertible by the Neumann series, and its inverse has norm at most~$C$. 
Consequently we can define operators $U = (L^n TR^n)^{-1}L^n\in\mathscr{B}(Y)$ and $V= R^n\in\mathscr{B}(Y)$ with $\lVert U\rVert\le C$ and $\lVert V\rVert =1$, and $UTV = I_Y$ by definition.
\end{proof}

\begin{remark}\label{R:perturb}
We shall also require the counterpart of \Cref{L:perturb} for $Y = \bigl(\bigoplus_{n\in\N}\ell_2^n\bigr)_E$, where $E=c_0$ or $E=\ell_1$. Clearly~$Y$ has a bimonotone basis, but it does not satisfy~\eqref{L:perturb:eq1}. One could set up a more abstract, unified framework which covers both these spaces and those in \Cref{L:perturb}. However, we find it more instructive to explain directly how to modify the proof of \Cref{L:perturb} to obtain the desired conclusion for $Y = \bigl(\bigoplus_{n\in\N}\ell_2^n\bigr)_E$. 

For $n\in\N$, define $L_n\in\mathscr{B}(\ell_2^{n+1},\ell_2^n)$ and $R_n\in\mathscr{B}(\ell_2^n,\ell_2^{n+1})$ by 
\[ L_n(t_1,\ldots,t_{n+1}) = (t_1,\ldots,t_{n}),\qquad R_n(t_1,\ldots,t_{n}) = (t_1,\ldots,t_n,0)\qquad (t_1,\ldots,t_{n+1}\in\K), \] 
and then define $L,R\in\mathscr{B}(Y)$ by 
\[ Ly = (L_ny_{n+1})_{n\in\N},\qquad Ry = (0,R_1y_1,R_2y_2,\ldots)\qquad (y=(y_n)_{n\in\N}\in Y). \]
As in the proof of \Cref{L:perturb}, these operators satisfy that $R$ is an isometry, $L$ has norm~$1$, $L^nR^n=I_Y$ for each $n\in\N$, and $L^nS\to 0$ as $n\to\infty$ for every $S\in\mathscr{K}(Y)$. Using these facts, we can now repeat the final part of the argument verbatim: Let $C>1$, and suppose that $T\in\mathscr{B}(Y)$ is an operator for which $S=I_Y-T$ is compact. Then, choosing $n\in\N$ such that $\lVert L^nS\rVert\le 1-\frac1{C}$, we have
\[ \lVert I_Y - L^n T R^n\rVert = \lVert L^nSR^n\rVert\le \lVert L^n S\rVert\,\lVert R\rVert^n\le 1 - \frac{1}{C}, \]
so the operator $L^n T R^n$ is invertible, and $\lVert (L^nTR^n)^{-1}\rVert\le C$. Hence the operators $U = (L^n TR^n)^{-1}L^n\in\mathscr{B}(Y)$ and $V= R^n\in\mathscr{B}(Y)$ satisfy $UTV = I_Y$ and $\lVert U\rVert\, \lVert V\rVert\le C$, as desired. 
\end{remark}

\begin{remark}\label{R:AHandFriends} The closed ideals of~$\mathscr{B}(X)$ have been classified for several other Banach spaces~$X$ than those specified in \Cref{mainthm}. All of these Banach spaces originate from the famous Argyros--Haydon space~$X_{\normalfont{\text{AH}}}$, which was created to solve the scalar-plus-compact problem~\cite{ah}. We do not know whether~$\mathscr{B}(X_{\normalfont{\text{AH}}})$ has a unique algebra norm because we do not know whether  every homomorphism from~$\mathscr{B}(X_{\normalfont{\text{AH}}})$ into a Banach algebra is continuous. This question is also open for the variants of~$X_{\normalfont{\text{AH}}}$ whose closed operator ideals have been classified in \cite[Theorem~2.1]{Tarb} and \cite[Theorem~1.4]{KLindiana}, respectively. (We shall consider the latter Banach space in more detail in \Cref{ex:nonuniformincompress} below.) 

 The situation is even more intriguing for the family of Banach spaces~$X_{K}$, where~$K$ is an infinite compact metric space, having the property that the Calkin algebra $\mathscr{B}(X_K)/\mathscr{K}(X_K)$ is isometrically isomorphic to the Banach algebra~$C(K)$ of continuous, scalar-valued functions on~$K$. A Banach space~$X_K$ with this property was originally constructed by Motakis, Puglisi and Zi\-zi\-mo\-pou\-lou~\cite{mpz} in the case where~$K$ is countably infinite,  while Motakis~\cite{Mo} has subsequently found a new approach which handles the general case.
 
 It is undecidable in \textsf{ZFC} whether every homomorphism from~$C(K)$ into a Banach alge\-bra is continuous;  more precisely, Dales~\cite{Da} and Esterle~\cite{Es} independently proved that discontinuous homomorphisms from~$C(K)$  exist under the Continuum Hypothesis, whereas Solovay and Woodin constructed a different model of \textsf{ZFC} in which all such homomorphisms are  continuous; see~\cite{DalesWoodin} for an exposition of this result.   Consequently  it is also undecidable  in \textsf{ZFC} whether every homomorphism from~$\mathscr{B}(X_K)$ into a Banach algebra is continuous. Note that we have a complete classification of the closed ideals of~$\mathscr{B}(X_K)$, as described in \cite[Remark~1.5(vi)]{KLindiana} and \cite[Proposition~8.6]{Mo}. 
\end{remark}

We conclude this section with three examples which use results from the literature to illustrate how quotient algebras of $\mathscr{B}(X)$ may fail to possess certain combinations of the properties we consider. The first two examples concern the \emph{weak Calkin algebra,} that is, the quotient $\mathscr{B}(X)/\mathscr{W}(X)$, where $\mathscr{W}(X)$ denotes the ideal of weakly compact operators on~$X$. 

\begin{example}\label{GST:Thm2.6}
  Let $X$ be a Banach space, identified with its canonical image in the bi\-dual~$X^{**}$. Since an operator $T\in\mathscr{B}(X)$ is weakly compact if and only if $T^{**}[X^{**}]\subseteq X$, the formula
  \[ \varphi(T+\mathscr{W}(X))(x^{**}+X) = T^{**}(x^{**}) + X\qquad (x^{**}\in X^{**},\, T\in\mathscr{B}(X)) \]
  defines an injective and contractive homomorphism $\varphi\colon\mathscr{B}(X)/\mathscr{W}(X)\to\mathscr{B}(X^{**}/X)$, which is non-zero if and only if~$X$ is non-reflexive. Gonz\'{a}lez, Saksman and Tylli studied this homomorphism in~\cite{GST}, showing in particular that when $X = (J_2\oplus J_2\oplus\cdots)_{\ell_2}$ is the $\ell_2$\nobreakdash-sum of countably many copies of James' quasi-reflexive Banach space~$J_2$, the range of~$\varphi$ is not closed \cite[Theorem~2.6]{GST}. Hence the weak Calkin algebra $\mathscr{B}(X)/\mathscr{W}(X)$ fails to be incompressible in this case. Clearly \mbox{$X\cong X\oplus X$}, so \Cref{maximalnorm} and \Cref{T:BEJ} imply that the quotient norm on~$\mathscr{B}(X)/\mathscr{W}(X)$ is maximal.  
\end{example}

Our second example involves the following piece of notation. For a Banach space~$E$, we write $E^{\sim}$ for the Banach algebra which is the unitization of~$E$ equipped with the trivial product; that is, $E^{\sim} = E\times\K$ as a vector space, with the norm and product defined by
\[ \lVert (x,s)\rVert = \lVert x \rVert  +
\lvert s\rvert\qquad \text{and}\qquad (x,s)(y,t) = (sy + tx, st)  \qquad (x,y\in E,\, s,t\in\mathbb{K}), \]
respectively. Given a linear map $\theta\colon E\to F$ into some Banach space~$F$, we define its unitization $\widetilde{\theta}\colon E^{\sim}\to F^{\sim}$ by $\widetilde{\theta}(x,s) = (\theta(x),s)$ for $x\in E$ and $s\in\K$. This is a unital homomorphism which is continuous if and only if~$\theta$ is continuous. 

\begin{example}\label{Ex:Readspace} Let~$X_{\text{R}}$ denote the Banach space which Read constructed in~\cite{read} with the following remarkable property: the weak Calkin algebra of~$X_{\text{R}}$ is isomorphic as a Banach algebra to the unitization~$\ell_2^{\sim}$ of the Hilbert space~$\ell_2$ equipped with the trivial product. (In fact, $X_{\normalfont{\text{R}}}$ has a slightly stronger property, namely: there exists a continuous, surjective homo\-mor\-phism $\psi\colon\mathscr{B}(X_{\normalfont{\text{R}}})\to\ell_2^{\sim}$
with $\ker\psi = \mathscr{W}(X_{\normalfont{\text{R}}})$ such that the extension
\begin{equation*}
 \spreaddiagramcolumns{2ex}\xymatrix{\{0\}\ar[r] &
   \mathscr{W}(X_{\normalfont{\text{R}}})\ar[r] &
   \mathscr{B}(X_{\normalfont{\text{R}}})\ar^-{\displaystyle{\psi}}[r]
   & \ell_2^{\sim}\ar[r] & \{0\}}
\end{equation*} 
splits in the category of Banach algebras, as shown in \cite[Theorem~1.2]{LS2}.) 

We claim that the norm on~$\ell_2^{\sim}$ is neither maximal nor incompressible, and hence the same is true for the weak Calkin algebra $\mathscr{B}(X_{\normalfont{\text{R}}})/\mathscr{W}(X_{\normalfont{\text{R}}})$. 

To see that the norm on~$\ell_2^{\sim}$ fails to be maximal, we must show that there exists a discontinuous homomorphism from~$\ell_2^{\sim}$ into a Banach algebra. This is  well known and not difficult to verify. Indeed, since~$\ell_2$ is infinite-dimensional, it admits a discontinuous linear functional $\theta\colon\ell_2\to\K$, so its unitization $\widetilde{\theta}\colon\ell_2^{\sim}\to\K^{\sim}$ is a discontinuous homomorphism. 

To verify that~$\ell_2^{\sim}$ fails to be incompressible, take $p\in(2,\infty)$ and consider the unitization $\widetilde{\varphi}\colon\ell_2^{\sim}\to\ell_p^{\sim}$ of the formal inclusion map~$\varphi$ from~$\ell_2$ into~$\ell_p$. This is a continuous, injective homomorphism with dense range, but it is not surjective, so it cannot be bounded below. (In fact, it is strictly singular, as observed in \cite[page~76]{LT1}, for instance.) 
\end{example}

Our third and final example displays a  Calkin algebra which is incompressible without being uniformly incompressible.

\begin{example}\label{ex:nonuniformincompress} This example is based on the Banach space~$Z$ studied in~\cite{KLindiana}. It is given by
\begin{equation}\label{D:Z}
Z = X_{\normalfont{\text{AH}}}\oplus_\infty Y, 
\end{equation}
where~$X_{\normalfont{\text{AH}}}$ denotes the Banach space of Argyros and Haydon already mentioned in \Cref{R:AHandFriends}, $Y$~is a certain closed, infinite-dimensional subspace of~$X_{\normalfont{\text{AH}}}$ constructed in \cite[Theorem~1.2]{KLindiana}, and the subscript~$\infty$ indicates that we equip~$Z$ with the norm \[ \lVert(x,y)\rVert = \max\{\lVert x\rVert,\lVert y\rVert\}\qquad (x\in X_{\normalfont{\text{AH}}},\,y\in Y). \]
The key property of~$Z$ that we require is that, 
according to \cite[Equation~(1.2)]{KLindiana},  every operator $T\in\mathscr{B}(Z)$ has the form
\begin{equation}\label{ex:nonuniformincompress:eq1} T = \begin{pmatrix} \alpha_{1,1}I_{X_{\normalfont{\text{AH}}}} & \alpha_{1,2}J\\ 0 & \alpha_{2,2}I_Y \end{pmatrix} + S, \end{equation}
  where  $\alpha_{1,1}, \alpha_{1,2}, \alpha_{2,2}\in\K$, 
  $J\colon Y\to X_{\normalfont{\text{AH}}}$ denotes the inclusion map and $S\in\mathscr{K}(Z)$. (We note in passing that~$Z$ has a basis, so the ideal of finite-rank operators is dense in~$\mathscr{K}(Z)$.)

  Johnson and Phillips~\cite{J-IHABB} observed that the subalgebra
  \[ \left\{ \begin{pmatrix} \alpha & \beta\\ 0 & \alpha \end{pmatrix} : \alpha,\beta\in\K\right\} \]
  of the algebra~$M_2(\K)$  of scalar-valued  $2\times 2$ matrices, equipped with the spectral norm,  is incompressible, but not uniformly incompressible. Building on this example,   we shall show that the same conclusion holds true for the Calkin algebra~$\mathscr{B}(Z)/\mathscr{K}(Z)$.

  First, \eqref{ex:nonuniformincompress:eq1} shows that~$\mathscr{B}(Z)/\mathscr{K}(Z)$ is finite-dimensional and therefore incompressible.
  
Second, we observe that the scalars~$\alpha_{1,1}$, $\alpha_{1,2}$ and~$\alpha_{2,2}$ in~\eqref{ex:nonuniformincompress:eq1} are uniquely determined by~$T$ because the operators~$I_{X_{\normalfont{\text{AH}}}}$, $J$ and~$I_Y$ are non-compact, so for each $\delta\in (0,1)$, we can define a map $\phi_\delta\colon\mathscr{B}(Z)\to M_2(\K)$ by 
  \[ \phi_\delta(T) = \begin{pmatrix} \alpha_{1,1} & \delta\alpha_{1,2}\\ 0 & \alpha_{2,2} \end{pmatrix}. \]
Straightforward calculations show that~$\phi_\delta$ is a unital homomorphism. Furthermore, $\phi_\delta$ is continuous with norm~$1$ provided that we equip~$M_2(\K)$ with the norm induced by identifying it with the Banach algebra~$\mathscr{B}(\ell_\infty^2)$, that is,
  \[ \left\lVert  \begin{pmatrix} \alpha_{1,1} & \alpha_{1,2}\\ \alpha_{2,1} & \alpha_{2,2} \end{pmatrix}\right\rVert = \max\bigl\{\lvert\alpha_{1,1}\rvert +\lvert\alpha_{1,2}\rvert, \lvert\alpha_{2,1}\rvert +\lvert\alpha_{2,2}\rvert\bigr\}. \]   
 Since $\ker\phi_\delta =\mathscr{K}(Z)$, the Fundamental Isomorphism Theorem implies that~$\phi_\delta$ induces a continuous, injective, unital  homomorphism $\widetilde{\phi}_\delta\colon\mathscr{B}(Z)/\mathscr{K}(Z)\to M_2(\K)$, also of norm~$1$, by the formula  $\widetilde{\phi}_\delta(T+\mathscr{K}(Z)) = \phi_\delta(T)$. 

 Consequently, if~$\mathscr{B}(Z)/\mathscr{K}(Z)$ were uniformly incompressible, there would be a constant~$\eta>0$ such that~$\widetilde{\phi}_\delta$  is bounded below by~$\eta$ for every $\delta\in(0,1)$. However, we have
\[ \left\lVert  \begin{pmatrix} 0 & J\\ 0 & 0\end{pmatrix}+\mathscr{K}(Z)\right\rVert =\lVert J+\mathscr{K}(Y,X_{\normalfont{\text{AH}}})\rVert = 1 \] because~$Y$ is infinite-dimensional, and therefore
\[ \eta\le \left\lVert  \widetilde{\phi}_\delta\left(\begin{pmatrix} 0 & J\\ 0 & 0\end{pmatrix}+\mathscr{K}(Z)\right)\right\rVert =   \left\lVert  \begin{pmatrix} 0 & \delta\\ 0 & 0\end{pmatrix} \right\rVert =  \delta\qquad (\delta\in(0,1)),  \]
contradicting that the same constant~$\eta>0$ should work for every~$\delta\in(0,1)$. This contradiction proves that  the Calkin algebra~$\mathscr{B}(Z)/\mathscr{K}(Z)$ fails to be uniformly incompressible. 

\Cref{P:JPSlemma0.2} implies that~$\mathscr{B}(Z)/\mathscr{K}(Z)$ cannot have the $C$-IFP for any $C\ge 1$. This is also clear from the fact that the inclusion operator~$J$ is inessential, as observed in~\cite{KLindiana}, because every inessential, idempotent operator has finite rank. 

A similar conclusion can be drawn for the Banach space $X=\ell_p\oplus\ell_q$ for $1\le p<q<\infty$. As already mentioned on page~\pageref{Ware}, Ware \cite[Corollary~5.2.3]{GW} showed that the Calkin algebra of~$X$ has a unique algebra norm and is therefore incompressible. However, $\mathscr{B}(X)$ contains many non-compact operators which no idempotent operator of infinite rank factors through because \cite[Corollaries~9 and~12]{FSZ} shows that there are $2^{\mathfrak{c}}$ many closed ideals between the ideals of compact and strictly singular operators on~$X$, and every strictly singular, idempotent operator has finite rank. Hence the Calkin algebra of~$X$ cannot have the $C$-IFP for any $C\ge 1$.
\end{example}

\section{A quantitative factorization result for the identity on $c_0$}\label{Section3}
\noindent  
The aim of this section is to establish the following characterization of when the identity operator on~$c_0$ factors through an operator defined on a Banach space of the form~$C_0(K)$ for a locally compact Hausdorff space~$K$.

\begin{theorem}\label{P:AequivB}
    Let $T\in\mathscr{B}(C_0(K),Y)$, where~$K$ is a locally compact Hausdorff space and~$Y$ is a Banach space for which the unit ball of~$Y^*$ is weak* sequentially compact, and let $C>0$. Then $C_0(K)$~con\-tains a sequence $(f_n)_{n\in\N}$ such that 
    \begin{equation}\label{P:AequivB:eq1} 
    \sup_{k\in K} \sum_{n=1}^\infty\lvert f_n(k)\rvert\le 1\qquad\text{and}\qquad \inf_{n\in\N}\lVert Tf_n\rVert >\frac1C
\end{equation} 
if and only if there are operators $U\in\mcb(Y,c_0)$ and $V\in\mathscr{B}(c_0,C_0(K))$ such that  
\begin{equation}\label{P:AequivB:eq2} 
UTV=I_{c_0}\qquad \text{and}\qquad \lVert U\rVert\,\lVert V\rVert<C.
\end{equation}
\end{theorem}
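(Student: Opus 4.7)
The equivalence splits into two directions. The forward implication (``$U, V$ exist $\Rightarrow (f_n)$ exists'') reduces to an adjoint computation. Assuming $UTV = I_{c_0}$ and $\|U\|\,\|V\| < C$, set $f_n := Ve_n/\|V\|$. The adjoint $V^* \colon C_0(K)^* \to \ell_1$ has norm $\|V\|$, and evaluating it at the Dirac functional $\delta_k \in B_{C_0(K)^*}$ yields the sequence $(Ve_n(k))_n \in \ell_1$ of $\ell_1$-norm at most $\|V\|$. Dividing by $\|V\|$ gives $\sup_k \sum_n |f_n(k)| \le 1$, while $UTVe_n = e_n$ forces $\|TVe_n\| \ge 1/\|U\|$ and hence $\|Tf_n\| \ge 1/(\|U\|\,\|V\|) > 1/C$.

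For the converse, given $(f_n)$ satisfying \eqref{P:AequivB:eq1}, the summability bound ensures that $V_0 a := \sum_n a_n f_n$ defines a contraction $V_0 \in \mathscr{B}(c_0, C_0(K))$ with $V_0 e_n = f_n$; indeed the series converges uniformly on $K$ and the pointwise limit vanishes at infinity because each $f_n$ does. Set $y_n := Tf_n$, so that $\|y_n\| > 1/C$. The plan is to extract a subsequence $(n_k)$ and construct a weak*-null sequence $(\psi_k) \subset Y^*$ that is biorthogonal to $(y_{n_k})$ and satisfies $\sup_k \|\psi_k\| < C$. Once this is in hand, $Ve_k := f_{n_k}$ inherits the contractive bound (the summability condition passes to subsequences), and $Uy := (\psi_k(y))_k$ defines an operator $U \in \mathscr{B}(Y, c_0)$ with $\|U\| \le \sup_k \|\psi_k\| < C$; biorthogonality then gives $UTVe_k = (\psi_j(y_{n_k}))_j = e_k$, so $UTV = I_{c_0}$ and $\|U\|\,\|V\| < C$ as required.

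The hard step is the construction of $(\psi_k)$, and this is where the Rosenthal-Dowling-Randrianantoanina-Turett machinery enters. By Hahn-Banach one can pick $\phi_n \in Y^*$ with $\phi_n(y_n) = 1$ and $\|\phi_n\| = 1/\|y_n\|$; the strict inequality $\inf_n \|y_n\| > 1/C$ produces a uniform bound $\|\phi_n\| \le C'$ for some $C' < C$. Weak*-sequential compactness of $B_{Y^*}$ yields a subsequence with $\phi_{n_k} \to \phi$ weak*, and the key observation that $V_0^* T^* \phi \in c_0^* = \ell_1$ forces $\phi(y_n) \to 0$, so that after further thinning the off-diagonal entries $(\phi_{n_k} - \phi)(y_{n_j})$ for $j \neq k$ can be made arbitrarily small. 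A Bessaga-Pelczynski-style Neumann-series correction $\psi_k = (\phi_{n_k} - \phi) + \sum_{j \neq k} c_{k,j}(\phi_{n_j} - \phi)$, with coefficients $c_{k,j}$ chosen to kill the off-diagonal values, then produces a weak*-null, genuinely biorthogonal sequence. The main obstacle is executing this perturbation while preserving the \emph{strict} bound $\sup_k \|\psi_k\| < C$; this is precisely the content of the quantitative refinement of Rosenthal's subsequence theorem due to Dowling, Randrianantoanina and Turett that the section invokes, and it is the place where the sharp passage from $C'$ back up to (but not reaching) $C$ must be controlled by choosing the off-diagonal tolerances small enough.
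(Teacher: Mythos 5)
Your easy direction (existence of $U,V$ implies existence of $(f_n)$) is correct and is essentially the paper's argument. The hard direction, however, contains a genuine gap: the intermediate goal you set yourself --- a weak*-null sequence $(\psi_k)$ in $Y^*$ biorthogonal to a subsequence $(y_{n_k})$ of $(Tf_n)$ with $\sup_k\lVert\psi_k\rVert<C$ --- is not attainable in general, and no amount of thinning or Neumann correction will repair it. The obstruction is the factor of $2$ you lose when subtracting the weak* limit $\phi$: you only control $\lVert\phi_{n_k}-\phi\rVert\le 2C'$ with $C'=1/\inf_n\lVert y_n\rVert$, and $2C'$ genuinely need not be below $C$. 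Concretely, take $K=\N$, $Y=c$, $T\colon c_0\to c$ the inclusion and $f_n=\delta e_n$ with $\delta\in(1/C,\min\{2/C,1\})$; then \eqref{P:AequivB:eq1} holds, but writing $\psi_k=\mu^{(k)}+\lambda_k\operatorname{Lim}$ with $\mu^{(k)}\in\ell_1$, biorthogonality forces $\mu^{(k)}_{n_k}=1/\delta$ while weak*-nullity forces $\sum_i\mu^{(k)}_i+\lambda_k\to0$, whence $\lVert\psi_k\rVert\ge 1/\delta+\lvert 1/\delta-\psi_k(\mathbf{1})\rvert\to 2/\delta>C$. So for this $T$ your $U$ cannot satisfy $\lVert U\rVert\,\lVert V\rVert<C$, even though the conclusion of the theorem holds (take $Ve_j=e_{2j}-e_{2j+1}$ and $Uz=\bigl(\tfrac12(z_{2j}-z_{2j+1})\bigr)_{j}$, which give $\lVert U\rVert\,\lVert V\rVert\le1$).

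The fix is exactly what this example suggests and what the paper does: $V$ must be allowed to send $e_j$ to a block combination of the $f_n$'s rather than to a single $f_{n_j}$. The paper first applies Rosenthal's subsequence theorem to obtain an infinite set $N$ on which $TV_0$ is bounded below by some $\eta\in(1/C,\inf_n\lVert Tf_n\rVert)$, and then applies the Dowling--Randrianantoanina--Turett theorem (via \Cref{L:BA195}) to the image of $\overline{\operatorname{span}}\{e_n:n\in N\}$ under $TV_0$; the DRT proof evades the factor of $2$ precisely by passing to differences/blocks of the basis vectors, whose biorthogonal functionals are averages of differences of the Hahn--Banach extensions and are therefore weak*-null with norm close to $1$ rather than $2$. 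Attributing the rescue of your construction to DRT is thus a mischaracterization: DRT does not make $\phi_{n_k}-\phi$ small in norm; it changes the vectors. Note also that your sketch omits the Rosenthal step altogether, and without the resulting lower bound on $\lVert\sum_n a_ny_n\rVert$ over the subsequence you have no quantitative control on any biorthogonal system, weak*-null or not.
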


Our main application of \Cref{P:AequivB} will be in \Cref{c0KA}, where it will provide the foundation for the proof of \Cref{mainthm}\ref{mainthm2}. However, a simple special case of it will also help us establish \Cref{mainthm}\ref{mainthm1.5}, which is why we state and prove it at this point.

To aid the presentation of the proof of \Cref{P:AequivB}, we have split it into a number of simpler statements, beginning with one which explains the significance of the left-hand part of~\eqref{P:AequivB:eq1}. For an index set~$\Gamma$, we write $(e_\gamma)_{\gamma\in\Gamma}$ for the unit vector basis of~$c_0(\Gamma)$.

\begin{lemma}\label{L:operatorc0toCK} Let $(f_n)_{n\in\N}$ be a sequence in~$C_0(K)$ for some locally compact Hausdorff space~$K$.  There is an operator $V\in\mathscr{B}(c_0,C_0(K))$ such that $Ve_n = f_n$ for every $n\in\N$ if and only if
\begin{equation}\label{L:operatorc0toCK:eq1} 
   \sup_{k\in K} \sum_{n=1}^\infty\lvert f_n(k)\rvert <\infty. \end{equation}
If one, and hence both, of these conditions are satisfied, the norm of the operator~$V$ is equal to the supremum~\eqref{L:operatorc0toCK:eq1}. 
\end{lemma}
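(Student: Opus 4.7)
The plan is to establish both directions of the equivalence by exploiting the duality $c_0^* = \ell_1$ together with the observation that every point evaluation functional $\delta_k\colon f\mapsto f(k)$ on~$C_0(K)$ has norm at most~$1$.

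For the forward implication, I would consider the adjoint $V^*\colon C_0(K)^*\to\ell_1$ of a purported~$V$ satisfying $Ve_n=f_n$ and evaluate it on~$\delta_k$ for arbitrary $k\in K$. Pairing $V^*\delta_k$ against the unit vector basis shows that $V^*\delta_k\in\ell_1$ is precisely the scalar sequence $(f_n(k))_{n\in\N}$, so its $\ell_1$-norm equals $\sum_n \lvert f_n(k)\rvert$. The bound $\lVert V^*\delta_k\rVert_{\ell_1}\le \lVert V\rVert\,\lVert\delta_k\rVert\le\lVert V\rVert$ then delivers both the finiteness of~\eqref{L:operatorc0toCK:eq1} and the inequality $\sup_{k\in K}\sum_n\lvert f_n(k)\rvert\le\lVert V\rVert$ after taking the supremum over~$k\in K$.

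For the converse, write $M$ for the (finite) supremum in~\eqref{L:operatorc0toCK:eq1} and define~$V$ on $a=(a_n)\in c_0$ by the formal series $Va=\sum_n a_n f_n$. The key step is to verify that the partial sums $s_N=\sum_{n=1}^N a_n f_n$ form a Cauchy sequence in~$C_0(K)$ under the uniform norm; the tail estimate
\[
\lVert s_N-s_M\rVert_\infty \le \sup_{k\in K}\sum_{n=M+1}^N\lvert a_n\rvert\,\lvert f_n(k)\rvert \le M\sup_{n>M}\lvert a_n\rvert
\]
does the job, since $a\in c_0$ forces the right-hand side to tend to~$0$ as $M\to\infty$. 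Because $C_0(K)$ is complete under $\lVert\,\cdot\,\rVert_\infty$ and each~$s_N$ already lies in~$C_0(K)$, the limit~$Va$ lies there too. Applying the same estimate with $M=0$ and passing to the limit gives $\lVert Va\rVert_\infty\le M\lVert a\rVert_\infty$, hence $\lVert V\rVert\le M$, and combining with the reverse bound from the forward direction yields the claimed equality $\lVert V\rVert=M$; linearity of~$V$ and the identity $Ve_n=f_n$ are immediate from the definition.

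There is no serious obstacle in the argument: the only point requiring genuine care is ensuring that the limit of the partial sums lies in $C_0(K)$ rather than merely in the bounded continuous functions on~$K$, and this is automatic from the closedness of~$C_0(K)$ inside~$C_b(K)$ under the uniform norm.
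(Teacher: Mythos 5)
Your proof is correct and follows essentially the same route as the paper's: the converse direction is the paper's argument (the uniform bound $\lVert\sum_n\alpha_nf_n\rVert_\infty\le M\lVert a\rVert_\infty$ obtained from the supremum, combined with density of the finitely supported sequences, here phrased via Cauchy partial sums), and your forward direction, though dressed in the language of the adjoint $V^*\delta_k\in\ell_1$, computes exactly the same quantity that the paper extracts by choosing unimodular signs $\sigma_n$ with $\sigma_nf_n(k)\ge0$. The only blemish is the notational clash of $M$ serving both as the supremum and as a summation index in your tail estimate; the mathematics is unaffected.
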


\begin{proof} Let~$C\in[0,\infty]$ denote the supremum in~\eqref{L:operatorc0toCK:eq1}, and observe that 
  \[ C =\sup\biggl\{\sum_{n=1}^m\lvert f_n(k)\rvert : k\in K,\,m\in\N\biggr\}. \]
  
  $\Rightarrow$. Suppose that  $V\in\mathscr{B}(c_0,C_0(K))$ is an operator with $Ve_n = f_n$ for every $n\in\N$, and take $m\in\N$ and $k\in K$. For each $n\in\{1,\ldots,m\}$, choose $\sigma_n\in\K$ such that $\lvert\sigma_n\rvert =1$ and $\sigma_nf_n(k)\ge 0$. Then $x=\sum_{n=1}^m\sigma_n e_n\in c_0$ has norm~$1$, so
  \[ \lVert V\rVert\ge \lVert Vx\rVert_\infty = \biggl\lVert \sum_{n=1}^m\sigma_n f_n\biggr\rVert_\infty\ge \biggl\lvert \sum_{n=1}^m\sigma_n f_n(k)\biggr\rvert  = \sum_{n=1}^m\sigma_n f_n(k) = \sum_{n=1}^m\lvert f_n(k)\rvert. \]
  This proves that $C\le\lVert V\rVert<\infty$.

  $\Leftarrow$. Suppose that the supremum~$C$ is finite.  Since~$c_{00}$ is dense in~$c_0$, it suffices to show that the linear map  $V\colon c_{00}\to C_0(K)$  given by $Ve_n = f_n$ for every $n\in\N$  is bounded. To verify this, we observe that, for $m\in\N$ and $\alpha_1,\ldots,\alpha_m\in\K$ with $\max_{1\le n\le m}\lvert\alpha_n\rvert\le 1$,
  \[ \biggl\lVert V\Bigl(\sum_{n=1}^m\alpha_n e_n\Bigr)\biggr\rVert_\infty =  \biggl\lVert \sum_{n=1}^m\alpha_n f_n\biggr\rVert_\infty = \sup_{k\in K}\biggl\lvert \sum_{n=1}^m\alpha_n f_n(k)\biggr\rvert\le \sup_{k\in K}\sum_{n=1}^m\lvert\alpha_n\rvert\,\lvert f_n(k)\rvert\le C. \]
Hence $V$ is bounded with  $\lVert V\rVert\le C$.   The final clause follows by combining the estimates obtained in the two parts of the proof. 
\end{proof}

We now come to the two main ingredients in the proof of \Cref{P:AequivB}. Both are generalizations of known results and follow from careful examination of the published proofs, as we shall explain in more detail below. The first is a  quantitative version of a celebrated  theorem of  Rosen\-thal, originally stated as the first remark following  \cite[Theorem~3.4]{ROS}. 

\begin{theorem}[Rosenthal]\label{Rosenthal}   
Let $T\in \mcb(c_0(\Gamma),X)$, where~$\Gamma$ is an infinite set and~$X$ is a Banach space, and  suppose that $\delta := \inf_{\gamma \in \Gamma}\|Te_\gamma\| > 0$. Then, for every $\eta\in(0,\delta)$, there is a subset $\Gamma'$ of $\Gamma$ of the same cardinality as $\Gamma$ for which the restriction of $T$ to the sub\-space $\overline{\operatorname{span}}\{e_\gamma: \gamma \in \Gamma'\}$ is bounded below by~$\eta$.
\end{theorem}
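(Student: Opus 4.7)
The plan is to follow Rosenthal's original strategy: pass to the dual via norming functionals, reducing the question to a combinatorial extraction in $\ell_1(\Gamma)$, and then apply a $\Delta$-system (sunflower) argument to achieve approximately disjoint supports.

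For the setup, I would use Hahn--Banach to choose, for each $\gamma\in\Gamma$, a functional $\phi_\gamma\in X^*$ with $\|\phi_\gamma\|=1$ and $\phi_\gamma(Te_\gamma)=\|Te_\gamma\|\geq\delta$, and set $f_\gamma := T^*\phi_\gamma \in c_0(\Gamma)^* = \ell_1(\Gamma)$. Then $f_\gamma(\gamma)\geq\delta$, $\|f_\gamma\|_1\leq\|T\|$, and each $f_\gamma$ has (at most) countable support. Writing $\epsilon := \delta - \eta > 0$, the claim is that it suffices to find $\Gamma' \subseteq \Gamma$ with $|\Gamma'| = |\Gamma|$ satisfying
\[
\sum_{\gamma'\in\Gamma'\setminus\{\gamma\}} |f_\gamma(\gamma')| \leq \epsilon \qquad (\gamma\in\Gamma'). \qquad(\ast)
\]
Indeed, for a finitely supported $x = \sum_\gamma a_\gamma e_\gamma \in c_0(\Gamma')$ and any $\gamma_0\in\Gamma'$ with $|a_{\gamma_0}| = \|x\|_\infty$, the reverse triangle inequality yields $\|Tx\| \geq |\phi_{\gamma_0}(Tx)| = |\langle f_{\gamma_0}, x\rangle| \geq |a_{\gamma_0}|\,\delta - \|x\|_\infty\,\epsilon = \eta\,\|x\|_\infty$, and density then extends this bound to $\overline{\mathrm{span}}\{e_\gamma : \gamma\in\Gamma'\}$.

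To produce such a $\Gamma'$, I would use that each $f_\gamma\in\ell_1(\Gamma)$ to choose a finite set $F_\gamma \ni \gamma$ with $\sum_{\gamma'\notin F_\gamma} |f_\gamma(\gamma')| < \epsilon/2$, and then apply the $\Delta$-system (sunflower) lemma to the family $\{F_\gamma\}_{\gamma\in\Gamma}$: there exist $\Gamma_1\subseteq\Gamma$ of the same cardinality as $\Gamma$ and a finite ``root'' $R$ such that $F_\gamma \cap F_{\gamma'} = R$ for all distinct $\gamma, \gamma'\in\Gamma_1$. Setting $\Gamma' := \Gamma_1 \setminus R$ (still of cardinality $|\Gamma|$ as $R$ is finite), for distinct $\gamma,\gamma'\in\Gamma'$ the fact that $\gamma' \in F_{\gamma'}\setminus R$ forces $\gamma' \notin F_\gamma$ (otherwise $\gamma' \in F_\gamma \cap F_{\gamma'} = R$); hence $\Gamma'\setminus\{\gamma\}\subseteq\Gamma\setminus F_\gamma$ and $(\ast)$ follows from the tail bound.

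The main technical obstacle is the combinatorial step for uncountable $\Gamma$: one must invoke the appropriate form of the $\Delta$-system lemma (the infinite sunflower lemma suffices for countable $\Gamma$, and the classical $\Delta$-system lemma for regular uncountable $\Gamma$), and singular cardinals would require extra care that is, however, unnecessary for the applications in this paper.
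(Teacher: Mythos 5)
Your functional-analytic reduction is sound: the choice of norming functionals, the identity $f_\gamma(\gamma)=\lVert Te_\gamma\rVert\ge\delta$, and the deduction of ``bounded below by $\eta$'' from condition $(\ast)$ are all correct, and this is indeed how Rosenthal's argument begins. (For context, the paper does not reprove the theorem at all; it cites \cite[Theorem~3.4]{ROS} and merely records that inspecting Rosenthal's proof yields the bound for every $\eta<\delta$.) The genuine gap is in the combinatorial extraction, which is where the real content of the theorem lies. The infinite sunflower ($\Delta$-system) lemma for a countably infinite family of finite sets requires a uniform bound on the cardinalities of the sets; without such a bound it is false: the family $F_n=\{1,\dots,n\}$ contains no sunflower with three petals, let alone an infinite one. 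Your sets $F_\gamma$, being $\epsilon/2$-supports of the vectors $f_\gamma\in\ell_1(\Gamma)$, admit no uniform cardinality bound --- only the masses $\lVert f_\gamma\rVert_1\le\lVert T\rVert$ are uniformly bounded --- so the countable case of your argument does not go through. That is precisely the case the paper needs, since Theorem~\ref{Rosenthal} is applied in the proofs of Theorem~\ref{P:AequivB} and Lemma~\ref{L:c0iso} only with a countable index set. What is required there is Rosenthal's lemma on relatively disjoint families of measures (the main result of~\cite{ROS}), whose proof is an iterated extraction exploiting the uniform bound on the total variations (roughly: each $f_\gamma$ can place mass greater than $\epsilon$ on at most $\lVert T\rVert/\epsilon$ members of any disjoint family), not a sunflower argument.

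For uncountable $\Gamma$ the argument is also incomplete as stated. The $\Delta$-system lemma returns a subfamily of full cardinality only when $\lvert\Gamma\rvert$ is regular; for singular $\lvert\Gamma\rvert$ one cannot in general extract a $\Delta$-subsystem of size $\lvert\Gamma\rvert$ from $\lvert\Gamma\rvert$ finite sets (partition a set of size $\aleph_\omega$ into blocks of sizes $\aleph_n$ and give all two-element sets in the $n^{\text{th}}$ block a common extra point: every $\Delta$-subsystem then has size at most some $\aleph_n$). Since the theorem is stated for arbitrary infinite $\Gamma$, dismissing the singular case as unnecessary for the applications does not discharge it. The regular uncountable case, by contrast, is correctly handled by your argument.
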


\begin{proof}
Rosenthal stated the result without specifying for which values of~$\eta>0$ we can find a subset~$\Gamma'$  of the same cardinality as $\Gamma$ such that the restriction of~$T$ to  $\overline{\operatorname{span}}\{e_\gamma: \gamma \in \Gamma'\}$ is bounded below by~$\eta$.  However, inspection of his proof shows that this is possible for every $\eta<\delta$. 
\end{proof}

The second result generalizes a theorem of Dowling, Randrianantoanina and Turett \cite[Theorem~6]{DoRaTu}. To state it concisely, we require the following notion.

\begin{definition}\label{Cisom}
Let $C\ge 1$. Two Banach spaces~$X$ and~$Y$ are $C$\emph{-isomorphic} if there exists an isomorphism  $T\in\mathscr{B}(X,Y)$ with $\|T\|\,\|T^{-1}\| \leq C$.
\end{definition}

\begin{theorem}[Dowling, Randrianantoanina and Turett]\label{DRT} 
Let $Z$ be a closed subspace of a Banach space~$Y$ for which the unit ball of~$Y^*$ is weak* sequentially compact, and suppose that $Z$ contains a closed sub\-space which is isomorphic to $c_0$. Then, for every $C > 1$, there is a projection $P \in \mcb(Y)$ with $\|P\| \leq C$ such that~$P[Y]$ is contained in~$Z$ and $C$\nobreakdash-iso\-mor\-phic to~$c_0$.
\end{theorem}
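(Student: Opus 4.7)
The plan is a three-stage argument, with constants chosen by fixing $\delta>0$ so small that $(1+\delta)^2 < C$.

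First, exploit the hypothesis on~$Z$: since~$Z$ contains an isomorph of~$c_0$, James's $c_0$-distortion theorem produces a normalized basic sequence $(x_n)_{n \in \N}$ in~$Z$ which is $(1+\delta)$-equivalent to the unit vector basis of~$c_0$ in the sense that
\[ (1+\delta)^{-1} \sup_n |a_n| \leq \Bigl\|\sum_n a_n x_n\Bigr\| \leq (1+\delta) \sup_n |a_n| \]
for every finitely supported scalar sequence $(a_n)$. The biorthogonal functionals $x_n^*$ on $\overline{\operatorname{span}}\{x_n : n \in \N\}$ then have norm at most $1+\delta$, so the Hahn--Banach theorem extends each to $f_n \in Y^*$ with $\|f_n\| \leq 1+\delta$.

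Second, apply the weak* sequential compactness of~$B_{Y^*}$ to the bounded sequence $(f_n)$ to extract a subsequence $(f_{n_k})_{k\in\N}$ which converges weak* to some $f \in Y^*$. Then block both the basis vectors and the functionals:
\[ z_k = x_{n_{2k}} - x_{n_{2k-1}}\qquad\text{and}\qquad \phi_k = \tfrac{1}{2}\bigl(f_{n_{2k}} - f_{n_{2k-1}}\bigr)\qquad (k\in\N). \]
A direct calculation yields exact biorthogonality $\phi_k(z_j) = \delta_{kj}$, while $(\phi_k)$ is weak*-null and uniformly bounded by $1+\delta$. Expressing $\sum_k a_k z_k$ as a linear combination of the $x_{n_j}$'s and reading off the upper and lower bounds from~$(x_n)$, one sees that $(z_k)$ is also $(1+\delta)$-equivalent to the $c_0$-basis, so the closed span $W := \overline{\operatorname{span}}\{z_k : k \in \N\} \subseteq Z$ is $(1+\delta)^2$-isomorphic, hence $C$-isomorphic, to~$c_0$.

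Third, define $Py := \sum_{k=1}^\infty \phi_k(y)\, z_k$ for $y \in Y$. The series converges in norm because the weak*-nullity of~$(\phi_k)$ gives $\phi_k(y) \to 0$ for every $y \in Y$, while $(z_k)$ is $c_0$-equivalent. By biorthogonality, $Pz_j = z_j$ for every $j$, so $P$ is idempotent with $P[Y] = W$, and the estimate
\[ \|Py\| \leq (1+\delta) \sup_k |\phi_k(y)| \leq (1+\delta)^2 \|y\| < C\,\|y\| \]
completes the verification.

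The main obstacle is the first stage---extracting an almost-isometric $c_0$-subsequence from the abstract hypothesis that $c_0$ embeds into~$Z$. This is the substantive content of the original Dowling--Randrianantoanina--Turett theorem and rests on James's $c_0$-distortion theorem. Once such a basis is in hand, the blocking trick in the second stage converts bare Hahn--Banach extensions into \emph{exactly} biorthogonal, weak*-null functionals; it is precisely here that the weak* sequential compactness of~$B_{Y^*}$ is indispensable, since without it one cannot produce the weak*-cluster point~$f$ whose differences become the weak*-null functionals~$\phi_k$.
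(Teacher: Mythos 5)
Your proof is correct and follows the same route the paper takes: the paper simply cites Dowling--Randrianantoanina--Turett \cite[Theorem~6]{DoRaTu} and notes that one adapts their argument by Hahn--Banach extending the coordinate functionals of an almost-isometric copy of $c_0$ inside~$Z$ to all of~$Y$, which is exactly what you do. Your write-up supplies the full details (James's distortion theorem, the weak*-convergent subsequence, and the differencing trick yielding weak*-null biorthogonal functionals) that the paper leaves to the cited reference, and all the constants check out.
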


\begin{proof}
For  $Z=Y$, this is precisely  the result which Dowling, Randrianantoanina and Turett proved in  \cite[Theorem~6]{DoRaTu}. In order to adapt their proof to the setting where $Z\subsetneq Y$, it suffices to observe that they define the projection~$P$ using Hahn--Banach extensions of the coordinate functionals corresponding to a $(1-\delta)^{-1}$-isomorphic copy of~$c_0$ inside~$Z$, for a suitably defined $\delta\in(0,1)$. By extending these coordinate functionals to all of~$Y$, rather than only to~$Z$, we can follow the rest of their proof verbatim to obtain the stated result. 
\end{proof}

\begin{remark}\label{remark_DRT}
In the case of real scalars, Galego and Plichko \cite[Theorem~4.3]{GP} have proved a result similar to  \Cref{DRT} under the weaker hypothesis that~$Y$ does not contain a copy of~$\ell_1$. However,  it is not clear to us whether this result carries over to complex scalars. Therefore we have opted for the above version, which will suffice for our purposes. 

We note in passing that Galego and Plichko do not state explicitly that the $C$-com\-ple\-mented copy of~$c_0$ they construct inside~$Z$ is $C$-isomorphic to~$c_0$. However, a close inspection of their proof reveals that it is.
\end{remark}

\begin{lemma}\label{L:BA195}
Let $X$ and $Y$ be Banach spaces, where~$X$ contains a subspace which is iso\-morphic to~$c_0$ and the unit ball of~$Y^*$ is weak* sequentially compact, and let $R\in\mathscr{B}(X,Y)$ be an operator which is bounded below by $\eta>0$. Then, for every $C>1$, $R[X]$~contains a closed subspace~$Z$ which is $C$-complemented in~$Y$ and $C$-isomorphic to~$c_0$, and there are operators $U\in\mathscr{B}(Y,c_0)$ and $V\in\mathscr{B}(c_0,X)$ such that $URV = I_{c_0}$ and $\lVert U\rVert\,\lVert V\rVert\le C^2/\eta$. \end{lemma}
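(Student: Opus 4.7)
The plan is to reduce the lemma to a direct application of Theorem \ref{DRT} combined with a bounded inverse for $R$. Since $R$ is bounded below, it restricts to an isomorphism of $X$ onto a closed subspace $R[X]$ of $Y$, and the image under $R$ of any copy of $c_0$ inside $X$ is a copy of $c_0$ inside $R[X]$. This places us in exactly the setting of Theorem \ref{DRT} applied to the closed subspace $R[X]\subseteq Y$, after which $U$ and $V$ can be written down explicitly using the projection it produces.

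First I would verify via the Cauchy-sequence estimate $\lVert R(x_n-x_m)\rVert\ge\eta\lVert x_n-x_m\rVert$ that $R[X]$ is closed in $Y$ and that the bijection $R\colon X\to R[X]$ has a bounded inverse $R^{-1}\colon R[X]\to X$ with $\lVert R^{-1}\rVert\le 1/\eta$. Given the subspace $W\subseteq X$ isomorphic to $c_0$, the restriction $R|_W$ is an isomorphism, so $R[W]$ is a subspace of $R[X]$ isomorphic to $c_0$. I would then apply Theorem \ref{DRT} to the closed subspace $R[X]$ of $Y$ to obtain, for each $C>1$, a projection $P\in\mathscr{B}(Y)$ with $\lVert P\rVert\le C$ such that $Z:=P[Y]\subseteq R[X]$ is $C$-isomorphic to $c_0$. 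This already yields the first assertion of the lemma.

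For the factorization, I would fix an isomorphism $J\colon Z\to c_0$, rescaled so that $\lVert J\rVert=1$ and $\lVert J^{-1}\rVert\le C$, and define
\[ U := JP\colon Y\to c_0 \qquad\text{and}\qquad V := R^{-1}J^{-1}\colon c_0\to X. \]
Then $\lVert U\rVert\le C$ and $\lVert V\rVert\le C/\eta$, so $\lVert U\rVert\,\lVert V\rVert\le C^2/\eta$. Moreover,
\[ URV = JP\,R\,R^{-1}J^{-1} = JPJ^{-1}, \]
and because $Z=P[Y]$ and $P^2=P$, the projection $P$ acts as the identity on $Z$; hence $PJ^{-1}=J^{-1}$ and $URV=I_{c_0}$.

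I do not expect any substantive obstacle here: the hard analytic work has already been absorbed into Theorem \ref{DRT}. The only points meriting a little care are checking that $R[X]$ is closed (so that $R^{-1}$ makes sense as a bounded operator) and normalizing $J$ so that the constants multiply to precisely $C^2/\eta$ rather than a larger multiple.
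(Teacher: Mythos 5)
Your proposal is correct and follows essentially the same route as the paper: both apply \Cref{DRT} to the closed subspace $R[X]$ of $Y$ to obtain the projection $P$ onto a $C$-isomorphic copy of $c_0$, and then assemble $U$ and $V$ from $P$, the isomorphism with $c_0$, and the inverse $R^{-1}\colon R[X]\to X$ (which the paper phrases via the restriction $R_0$ of $R$ to $R^{-1}[P[Y]]$). The norm bookkeeping in your version matches the paper's bound $C^2/\eta$ exactly.
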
  

\begin{proof}
  Since~$R$ is bounded below and~$X$ contains a subspace which is isomorphic to~$c_0$, we can apply \Cref{DRT} to the closed subspace~$Z=R[X]$ of~$Y$ to find a projection~$P$ of~$Y$ onto a closed subspace~$Y_0$ of~$R[X]$ such that $\lVert P\rVert\leq C$ and there is an isomorphism $S\in\mathscr{B}(c_0,Y_0)$ with $\lVert S\rVert\,\lVert S^{-1}\rVert\le C$. Set  $X_0 = R^{-1}[Y_0]$, and let $R_0\in\mathscr{B}(X_0,Y_0)$ be the restriction of~$R$, which is an isomorphism with $\lVert R_0^{-1}\rVert\le 1/{\eta}$ because~$R$ is bounded below by~$\eta$. Then we have a commutative diagram
\begin{equation*}
    \begin{gathered}
    \spreaddiagramrows{2ex}\spreaddiagramcolumns{5ex}%
    \xymatrix{%
    c_0\ar^-{\displaystyle{I_{c_0}}}[rr] \ar_-{\displaystyle{S}}[d] && c_0\\ 
    Y_0 \ar^-{\displaystyle{R_0^{-1}}}[r] & X_0\ar^-{\displaystyle{R_0}}[r] \ar_-{\displaystyle{J}}[d] & Y_0\ar_-{\displaystyle{S^{-1}}}[u]\\ 
    & X\ar^-{\displaystyle{R}}[r] & Y\ar_-{\displaystyle{P}}[u]\smashw{,}}
    \end{gathered}
    \end{equation*}
    where $J\colon X_0\to X$ denotes the inclusion map. Hence the operators $U = S^{-1}P\in\mathscr{B}(Y,c_0)$ and $V = J R_0^{-1}S\in\mathscr{B}(c_0,X)$ satisfy $URV = I_{c_0}$, and 
    \begin{equation*}
      \lVert U\rVert\,\lVert V\rVert\le \lVert S^{-1}\rVert\,\lVert P\rVert\,\lVert J\rVert\,\lVert R_0^{-1}\rVert\,\lVert S\rVert\le\frac{C^2}{\eta}. \qedhere
    \end{equation*}
\end{proof}  

\begin{proof}[Proof of Theorem~{\normalfont{\ref{P:AequivB}}}] $\Rightarrow$. Suppose that~$(f_n)_{n\in\N}$ is a sequence in~$C_0(K)$ which satisfies~\eqref{P:AequivB:eq1}.  \Cref{L:operatorc0toCK} shows that we can define an operator $V_1\in\mathscr{B}(c_0,C_0(K))$ with $\lVert V_1\rVert\le 1$ by $V_1e_n = f_n$ for every $n\in\N$.  Choose $\eta\in (C^{-1},\inf_{n\in\N}\lVert Tf_n\rVert)$. Then \Cref{Rosenthal} implies that~$\N$ contains an infinite subset~$N$ such that the restriction~$R$ of the operator~$TV_1$ to the sub\-space $X=\overline{\operatorname{span}}\{ e_n : n\in N\}$ is bounded below by~$\eta$. 
  
Take $\xi\in(1,\sqrt{C\eta})$.  Since~$X$ is (isometrically) isomorphic to~$c_0$, we can apply \Cref{L:BA195} to obtain opera\-tors $U\in\mathscr{B}(Y,c_0)$ and $V_2\in\mathscr{B}(c_0,X)$ such that \[ URV_2 = I_{c_0}\qquad\text{and}\qquad \lVert U\rVert\,\lVert V_2\rVert\le \frac{\xi^2}{\eta}<C. \] It follows that~\eqref{P:AequivB:eq2} is satisfied provided that we define $V=V_1JV_2\in\mathscr{B}(c_0,C_0(K))$, where $J\colon X\to c_0$ denotes the inclusion map.

Conversely, suppose that $U\in\mcb(Y,c_0)$ and $V\in\mathscr{B}(c_0,C_0(K))$ are operators satisfying~\eqref{P:AequivB:eq2}, and set $f_n = \frac1{\lVert V\rVert} Ve_n\in C_0(K)$ for every $n\in\N$. 
\Cref{L:operatorc0toCK} shows that $\sup_{k\in K} \sum_{n=1}^\infty\lvert f_n(k)\rvert =1$ because the operator $V/\lVert V\rVert$ has norm~$1$. Furthermore, since $UTf_n=\frac1{\lVert V\rVert} e_n$  for every $n\in\N$, we have 
\[ \inf_{n\in\N}\lVert Tf_n\rVert\ge \frac1{\lVert U\rVert\,\lVert V\rVert} >\frac1C, \] 
so the sequence $(f_n)$ satisfies~\eqref{P:AequivB:eq1}.
\end{proof}

\section{The proof of Theorem~\ref{mainthm}\ref{mainthm1}}\label{c0sum}
\noindent 
For Banach spaces~$E$, $X$ and~$Y$, let~$\overline{\mcg}_E(X,Y)$ denote the closure of the set of operators $T\in\mathscr{B}(X,Y)$ which factor through~$E$ in the sense that $T=UV$ for some operators $U\in\mcb(E,Y)$ and $V\in\mcb(X,E)$. This set is closed under addition provided that~$E$ contains a complemented subspace isomorphic to~$E\oplus E$, and therefore~$\overline{\mcg}_E$ defines a closed operator ideal in this case. 

In the remainder of this section, set $E=c_0$ or $E=\ell_1$ (so that~$\overline{\mcg}_E$ is indeed a closed operator ideal), and let~$X$ denote either of the Banach spaces $X=\bigl(\bigoplus_{n \in \mathbb{N}} \ell_2^n\bigr)_E$, the latter being the dual space of the former. The lattice of closed ideals of~$\mathscr{B}(X)$, originally given as \cite[Corollary~5.6]{LLR} for $E=c_0$ and as \cite[Theorem~2.12]{LSZ} for $E=\ell_1$, is
\begin{equation}\label{c0sum:eq}
  \{0\}\subsetneq \mck(X)\subsetneq \overline{\mcg}_E(X) \subsetneq \mcb(X). \end{equation}
A result of Ware \cite[Theorem~5.3.1]{GW}, already mentioned in the second bullet point of clause~\ref{Ware} on page~\pageref{Ware},  tells us that  the Calkin algebra $\mcb(X)/\mck(X)$ has a unique algebra norm for both of these Banach spaces~$X$. (Alternatively, applying \Cref{calkindirectsumincompressible} below with $\Gamma = \emptyset$, we obtain the stronger conclusion that  $\mcb(X)/\mck(X)$ has the $C$-IFP for every $C>1$.) Hence, in view of~\eqref{Proof:todo}, \Cref{mainthm}\ref{mainthm1} will be established once we prove the following  result. 

\begin{theorem}\label{T:LLR_LSZ}
 Let $X=\bigl(\bigoplus_{n \in \mathbb{N}} \ell_2^n\bigr)_E$, where   $E=c_0$ or $E=\ell_1$. 
 For every  constant $C>1$ and every operator $T\in\mathscr{B}(X)\setminus\overline{\mathscr{G}}_E(X)$ with $\lVert T+ \overline{\mathscr{G}}_E(X)\rVert = 1$, there are operators $U,V\in\mathscr{B}(X)$ such that $UTV = I_X$ and $\lVert U\rVert\,\lVert V\rVert\le C$. 
 
 Consequently the quotient alge\-bra $\mathscr{B}(X)\big/\,\overline{\mathscr{G}}_E(X)$  has the $C$-IFP and is uniform\-ly in\-com\-pressible.
\end{theorem}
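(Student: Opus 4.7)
The plan is to construct operators $U, V \in \mathscr{B}(X)$ with $UTV = I_X$ and $\lVert U\rVert\,\lVert V\rVert \le C$; by \Cref{Factorisation} and \Cref{R:factoringID} this yields the $C$-IFP for $\mathscr{B}(X)/\overline{\mcg}_E(X)$, and \Cref{P:JPSlemma0.2} then provides uniform incompressibility. The strategy is to upgrade the qualitative factorization of $I_X$ through $T$ underpinning the classification of closed ideals in \cite{LLR} for $E = c_0$ and \cite{LSZ} for $E = \ell_1$ to a quantitative one. Fix $\delta \in (0,1)$ small enough that $(1+\delta)/(1-\delta) \le C$; the aim is to produce a sequence of finite-dimensional Hilbert subspaces $H_k \subset X$ on which $T$ acts almost isometrically and whose images $T[H_k]$ are almost contained in single coordinate blocks of $X$, with constants uniform in $k$.

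The central construction is a gliding-hump / blocking argument producing $H_k \subset \ell_2^{n_k} \subset X$ and target indices $m_k \in \N$ with $(n_k)$, $(m_k)$ strictly increasing and the dimensions $(\dim H_k)$ exhausting $\N$, such that $T|_{H_k}$ is bounded below by $1-\delta$ and $T[H_k]$ is $(1+\delta)$-close to a subspace of the coordinate block $\ell_2^{m_k}$ of $X$. The inductive step rests on the quotient-norm hypothesis: if at some stage no suitable pair $(H_k, m_k)$ with indices exceeding those already used could be found, then one could build an operator supported on the already-chosen coordinate blocks -- which factors through a finite-dimensional subspace of $X$ and therefore lies in $\overline{\mcg}_E(X)$ -- whose difference from $T$ has norm strictly less than $1$, contradicting $\lVert T + \overline{\mcg}_E(X)\rVert = 1$. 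Extracting the Hilbert block $H_k$ of the desired dimension from an initial witness vector uses a finite-dimensional perturbation / singular-value argument on the restriction of $T$ to $\ell_2^{n_k}$, for which $n_k$ needs to be chosen sufficiently large relative to $\dim H_k$.

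With $(H_k)_{k \in \N}$ in hand, assembly is immediate. Since $E \in \{c_0, \ell_1\}$ and $\dim H_k$ exhausts $\N$, the canonical identification $X \cong \bigl(\bigoplus_{k \in \N} H_k\bigr)_E$ gives an isometry $V : X \to X$ with image the closed span of $\bigcup_k H_k$. The operator $U : X \to X$ is defined as the composition of the norm-one block projection of $X$ onto $\bigoplus_k \ell_2^{m_k}$, a block-diagonal operator applying $(T|_{H_k})^{-1}$ (of norm at most $(1-\delta)^{-1}$) after the approximating projection onto $T[H_k]$ (of norm at most $1+\delta$), and the inverse isometry $V^{-1}$ restricted to $\bigoplus_k H_k$. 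A routine calculation yields $UTV = I_X$ and $\lVert U\rVert\,\lVert V\rVert \le (1+\delta)/(1-\delta) \le C$. The principal obstacle is the blocking construction: ensuring simultaneously that $T|_{H_k}$ is near-isometric \emph{and} $T[H_k]$ sits near a single coordinate block, with constants uniform in $k$. The $E = c_0$ case is handled using the weak-null behaviour of the coordinate functionals of $X$, while $E = \ell_1$ can be approached by duality via $X^* \cong \bigl(\bigoplus_{n \in \N} \ell_2^n\bigr)_{c_0}$, or directly by exploiting the $1$-unconditionality of the $\ell_1$-decomposition.
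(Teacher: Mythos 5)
Your overall architecture (a quantitative blocking construction, assembly of $U$ and $V$ from almost-isometric finite-dimensional Hilbert blocks, and the reduction of the $C$-IFP claim to exact factorization of $I_X$ via \Cref{R:factoringID}) is in the same spirit as the paper's proof, which quantifies the factorization machinery behind the ideal classification (\Cref{L:LLR5.5iii} and \Cref{LSZ2.10iii}). However, the justification of your inductive step contains a genuine error that breaks the argument. You claim that if no further suitable pair $(H_k,m_k)$ can be found, then $T$ lies within distance strictly less than $1$ of an operator ``supported on the already-chosen coordinate blocks'', hence of a finite-rank operator, contradicting $\lVert T+\overline{\mcg}_E(X)\rVert=1$. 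That implication is false. Consider $T=\bigoplus_{n\in\N}P_n^{(1)}$, where $P_n^{(1)}$ is the rank-one orthogonal projection of~$\ell_2^n$ onto its first coordinate: no two-dimensional Hilbert block on which $T$ is bounded below by $1-\delta$ exists, so the blocking stalls, yet $T$ is at distance exactly $1$ from every finite-rank (indeed every compact) operator. This $T$ is of course excluded by the hypothesis, since it lies in $\overline{\mcg}_{c_0}(X)$, but it shows that your dichotomy ``either the blocking continues or $T$ is close to a finitely-block-supported operator'' is not valid. The correct dichotomy --- the real technical content, supplied by \Cref{m.epsilon} and \Cref{n.epsilon} --- is: either the blocking continues, or one can delete a \emph{uniformly bounded} number of dimensions from \emph{every} coordinate block so that the resulting compression of $T$ has norm at most $\epsilon$. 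The deleted part is a block-diagonal of uniformly-bounded-rank projections whose image embeds into $(\ell_2^{m+1}\oplus\ell_2^{m+1}\oplus\cdots)_E\cong E$, so the approximant factors through~$E$ (plus a finite-rank correction) while being nowhere near compact. Without this step your argument would equally ``prove'' that $I_X$ factors through every non-compact $T$ with $\lVert T\rVert_e=1$, which is false because $\overline{\mcg}_E(X)$ is a proper closed ideal strictly containing $\mck(X)$.

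Two smaller points. Since $T[H_k]$ is only approximately contained in $\ell_2^{m_k}$, your assembly yields $\lVert UTV-I_X\rVert$ small rather than $UTV=I_X$; a Neumann-series correction as in \Cref{L:perturb} and \Cref{R:perturb} is needed and slightly worsens the constant, so $\delta$ must be chosen accordingly. Also, for $E=\ell_1$ the duality is not painless: a compact perturbation can only arrange finite columns, not a locally finite matrix, which is why the paper runs a separate dual index $n_\epsilon$ rather than literally dualizing the $c_0$ case.
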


\begin{notation}\label{N:dirsums} Direct sums and operators between them will play a key role in this section and the next, so to avoid any possible confusion, let us explain the notation we use, all of which is fairly standard. 

  Throughout this paragraph, $\Gamma$ denotes a non-empty set, $(X_\gamma)_{\gamma\in\Gamma}$
  is a family of Banach spaces indexed by~$\Gamma$, and  $E=c_0$ or $E=\ell_1$ as above. We write $\bigl(\bigoplus_{\gamma\in\Gamma} X_\gamma\bigr)_E$ for the \emph{$E$-direct sum} of the family~$(X_\gamma)_{\gamma\in\Gamma}$; that is, $\bigl(\bigoplus_{\gamma\in\Gamma} X_\gamma\bigr)_E$ is the Banach space consisting of those functions $x\colon\Gamma\to\bigcup_{\gamma\in\Gamma}X_\gamma$ such that $x(\gamma)\in X_\gamma$ for each $\gamma\in\Gamma$ and
 \begin{equation}\label{N:dirsums:Eq1}  \begin{cases} \text{the set}\ \{\gamma\in\Gamma : \lvert x(\gamma)\rvert\ge\epsilon\}\ \text{is finite for each}\ \epsilon>0\ &\text{for}\quad E= c_0,\\
     \sum_{\gamma\in\Gamma}\lVert x(\gamma)\rVert <\infty\quad &\text{for}\quad E=\ell_1. \end{cases}
\end{equation}   
The vector-space operations are defined pointwise,  and the norm is given by
\[  \lVert x\rVert = \begin{cases} \sup_{\gamma\in\Gamma}\lVert x(\gamma)\rVert\quad &\text{for}\quad E=c_0,\\ \sum_{\gamma\in\Gamma} \lVert x(\gamma)\rVert\quad &\text{for}\quad E=\ell_1.  \end{cases} \]
(Of course,  \eqref{N:dirsums:Eq1} is trivially satisfied  when~$\Gamma$ is finite; in this case  we simply equip the full Cartesian product of~$(X_\gamma)_{\gamma\in\Gamma}$ with the norm specified above.) We often write elements of $\bigl(\bigoplus_{\gamma\in\Gamma} X_\gamma\bigr)_E$ in the form 
$(x_\gamma)_{\gamma\in\Gamma}$, where $x_\gamma = x(\gamma)$. 
Throughout this section and the next, for $\beta\in\Gamma$, we reserve the symbols $J_\beta\colon X_\beta\to \bigl(\bigoplus_{\gamma\in\Gamma} X_\gamma\bigr)_E$ and \mbox{$Q_\beta\colon \bigl(\bigoplus_{\gamma\in\Gamma} X_\gamma\bigr)_E\to X_\beta$} for the canonical $\beta^{\text{th}}$ coordinate embedding and projection, respectively.

Given an operator $T\in \mathscr{B}\bigl(\bigl(\bigoplus_{\gamma\in\Gamma} X_\gamma\bigr)_E,\bigl(\bigoplus_{\gamma\in\Gamma} Y_\gamma\bigr)_E\bigr)$, for some family $(Y_\gamma)_{\gamma\in\Gamma}$ of Banach spaces,  we associate with it the $\Gamma\times\Gamma$ matrix $(T_{\beta,\gamma})_{\beta,\gamma\in\Gamma}$  whose $(\beta,\gamma)^{\text{th}}$ coefficient is the operator defined by  $T_{\beta,\gamma} = Q_\beta T J_\gamma\in\mathscr{B}(X_\gamma,Y_\beta)$.
Suppose that~$\Gamma$ is infinite. Following \cite{LLR}, we say that the operator~$T$ has \emph{finite rows} if the set $\{ \gamma\in\Gamma : T_{\beta,\gamma}\ne 0\}$ is finite for each $\beta\in\Gamma$; and analogously $T$ has \emph{finite columns}  if the set $\{ \beta\in\Gamma : T_{\beta,\gamma}\ne 0\}$ is finite for each $\gamma\in\Gamma$. An operator which has both finite rows and finite columns has \emph{locally finite matrix.}

Given a family $\{ T_\gamma\in\mathscr{B}(X_\gamma,Y_\gamma) : \gamma\in\Gamma\}$ of operators such that $\sup_{\gamma\in\Gamma} \lVert T_\gamma\rVert<\infty$, we define the associated \emph{diagonal operator} by
\[ \bigoplus_{\gamma\in\Gamma} T_\gamma\colon\ (x_\gamma)_{\gamma\in\Gamma}\mapsto (T_\gamma x_\gamma)_{\gamma\in\Gamma},\quad \Bigl(\bigoplus_{\gamma\in\Gamma} X_\gamma\Bigr)_E\to \Bigl(\bigoplus_{\gamma\in\Gamma} Y_\gamma\Bigr)_E. \] 
This is clearly a bounded operator with norm $\sup_{\gamma\in\Gamma} \lVert T_\gamma\rVert$. 
\end{notation}

\subsection*{The case {\mathversion{bold}$E=c_0$}.} Throughout this subsection, we consider the Banach space \begin{equation*}
X=\Bigl(\bigoplus_{n \in \mathbb{N}} \ell_2^n\Bigr)_{c_0}. \end{equation*}
The  following index~$ m_\epsilon(T)$, which was originally introduced in \cite[Defini\-tion~5.2(ii)]{LLR}, played a key role  in the classification~\eqref{c0sum:eq} of the closed ideals of~$\mcb(X)$, and it will also be an essential ingredient in our proof of \Cref{T:LLR_LSZ}. 

\begin{definition}\label{defnmepsilon}
For $\epsilon>0$ and an operator $T\in\mathscr{B}\bigl(\bigl(\bigoplus_{n\in N}H_n\bigr)_{c_0},Y)$, where the index set~$N$ is  finite, $H_n$ is a Hilbert space for each $n\in N$ and~$Y$ can be any Banach space, we define
\begin{multline*}
  m_\epsilon(T) = \sup\biggl\{ m \in \mathbb{N}_0 : \Bigl\lVert T\circ \bigoplus_{n\in N} (I_{H_n}-P_{G_n})\Bigr\rVert > \epsilon\ \text{for every subspace}\  G_n\ \text{of}\ H_n\biggr.\\[-4mm] \biggl. \text{with}\ \dim G_n\leq m\ \text{for each}\ n\in N\biggr\}\in\N_0\cup\{\pm\infty\},
\end{multline*}
where $P_{G_n}$ denotes the orthogonal projection of~$H_n$ onto the subspace~$G_n$.
\end{definition}

Loosely speaking, $m_\epsilon(T)$ is the largest number of dimensions that we can remove from each of the Hilbert spaces in the domain of~$T$ and still obtain an operator with norm  greater than~$\epsilon$.

Suppose that $T\in\mcb(X)$ is an operator with finite rows. As described in~\cite[Remark~5.4]{LLR},  the above definition of~$m_\epsilon$ can be applied to define $m_\epsilon(Q_jT)$ for every $j\in\mathbb{N}$ by ignoring the cofinite number of Hilbert spaces in the domain of~$Q_jT$ on which it acts trivially.

The following lemma explains why the quantities $m_\epsilon(Q_jT)$ for $j\in\mathbb{N}$ are relevant in our investigation. It is proved in \cite[Theorem~5.5(iii)]{LLR}. (The bounds on the norms of the ope\-ra\-tors~$U$ and~$V$ are not stated explicitly in~\cite{LLR}, but follow from the proof of the result.) 

\begin{lemma}\label{L:LLR5.5iii} Let $T \in \mcb(X)$ be an operator with locally finite matrix, and suppose that the set $\{m_\epsilon (Q_jT) : j\in \mathbb{N}\}$ is unbounded for some $\epsilon>0$. Then there are operators $U,V\in \mcb(X)$ with $\lVert U\rVert\le 1/\epsilon$ and $\lVert V\rVert\le 1$ such that $UTV=I_X$.
\end{lemma}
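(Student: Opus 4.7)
The plan is to extract from $T$, using the hypothesis, a countable sequence of pairwise disjoint finite-dimensional ``blocks'' which each support a factorization of the identity on $\ell_2^k$, and then splice these blocks into the desired global factorization $UTV=I_X$.

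\emph{Selecting indices.} First I shall use unboundedness of $\{m_\epsilon(Q_jT):j\in\N\}$ together with local finiteness of the matrix of $T$ to extract strictly increasing indices $j_1<j_2<\cdots$ with $m_\epsilon(Q_{j_k}T)\geq k$ (and with some slack to spare), and such that the row-supports $F_k=\{n\in\N:T_{j_k,n}\neq 0\}$ are pairwise disjoint. These sets are finite by the finite-rows property, while finite columns imply that for any finite $S\subseteq\N$ only finitely many $j$ satisfy $F_j\cap S\neq\emptyset$. So at the $k^{\text{th}}$ stage of the construction I discard the finitely many bad candidates inside $\{j:F_j\cap(F_1\cup\cdots\cup F_{k-1})\neq\emptyset\}$ and still find a $j_k$ achieving the required lower bound on $m_\epsilon(Q_{j_k}T)$.

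\emph{Building blocks.} For each $k$ I construct a norm-one operator $V_k\colon\ell_2^k\to X$ whose range is supported on coordinates from $F_k$, together with $U_k\colon\ell_2^{j_k}\to\ell_2^k$ of norm at most $1/\epsilon$, such that $U_kQ_{j_k}TV_k=I_{\ell_2^k}$. Inside level $k$ I inductively produce unit vectors $y_1,\ldots,y_k\in X$ supported on $F_k$ such that the $y_i(n)$ are pairwise orthogonal in $\ell_2^n$ for each $n\in F_k$, and the images $Q_{j_k}Ty_1,\ldots,Q_{j_k}Ty_k$ are pairwise orthogonal in $\ell_2^{j_k}$ with norms strictly greater than $\epsilon$. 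Setting $V_ke_i=y_i$, the first property yields $\lVert V_k\rVert\leq 1$, while the second identifies $Q_{j_k}TV_k$ as a $k$-to-$1$ injection of lower bound $\epsilon$ onto a $k$-dimensional subspace of $\ell_2^{j_k}$; composing its inverse on that subspace with the orthogonal projection of $\ell_2^{j_k}$ onto it yields the desired $U_k$. At the inductive step $i\leq k$ I impose that $y_i$ be coordinate-wise orthogonal to $y_1,\ldots,y_{i-1}$ (a constraint that fits directly into \Cref{defnmepsilon}) and that its image lie in the orthogonal complement in $\ell_2^{j_k}$ of $\operatorname{span}\{Q_{j_k}Ty_\ell:\ell<i\}$. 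The image-side constraint is absorbed by replacing $Q_{j_k}T$ with the rank-$(i-1)$ perturbation $(I-P_{W_{i-1}})Q_{j_k}T$, whose $m_\epsilon$-index is still at least $k-(i-1)\geq 1$ provided the slack in the initial choice of $j_k$ was taken large enough.

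\emph{Splicing.} With the blocks in hand, I define $V,U\in\mcb(X)$ by
\[ Vx=\sum_kV_kx(k)\qquad\text{and}\qquad (Uy)(k)=U_kQ_{j_k}y\qquad(x,y\in X,\,k\in\N). \]
The sums are well-defined in $X$ because the image of $V_kx(k)$ is supported on $F_k$, the $F_k$'s are pairwise disjoint, and coordinates of vectors in $X$ shrink to zero; the bounds $\lVert V\rVert\leq 1$ and $\lVert U\rVert\leq 1/\epsilon$ are then immediate from $\lVert V_k\rVert\leq 1$ and $\lVert U_k\rVert\leq 1/\epsilon$. Disjointness of the $F_k$'s combined with the finite-rows property yields $Q_{j_k}TV_\ell=0$ for $\ell\neq k$, so the $k^{\text{th}}$ coordinate of $UTVx$ is $U_kQ_{j_k}TV_kx(k)=x(k)$, proving $UTV=I_X$.

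\emph{Main obstacle.} The delicate step is the inductive block construction, where the coordinate-wise orthogonality built into \Cref{defnmepsilon} must be reconciled with the image-side orthogonality in the Hilbert space $\ell_2^{j_k}$, while losing only a vanishing amount in the constants so as to attain the sharp norm bounds $\lVert V\rVert\leq 1$ and $\lVert U\rVert\leq 1/\epsilon$. Tracking numerical constants precisely may require performing the construction first for some $\epsilon'<\epsilon$ and then driving $\epsilon'\nearrow\epsilon$, exploiting the freedom to make $m_\epsilon(Q_{j_k}T)$ arbitrarily large along the sequence.
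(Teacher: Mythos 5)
First, note that the paper does not actually prove this lemma: it quotes it from \cite[Theorem~5.5(iii)]{LLR}, so you are reconstructing that argument from scratch. Your architecture is the right one --- the selection of indices $j_k$ with pairwise disjoint row-supports via the finite-columns property, the reduction to block factorizations $U_kQ_{j_k}TV_k=I_{\ell_2^k}$, and the diagonal splicing (including the verification that $Vx$ lies in $X$ and that $Q_{j_k}TV_\ell=0$ for $\ell\ne k$) are all correct, and coordinate-wise orthogonality of the $y_i$ does give $\lVert V_k\rVert\le 1$. The gap is in the inductive block construction. You assert that the requirement $Q_{j_k}Ty_i\perp W_{i-1}:=\operatorname{span}\{Q_{j_k}Ty_\ell:\ell<i\}$ is ``absorbed by replacing $Q_{j_k}T$ with $(I-P_{W_{i-1}})Q_{j_k}T$''. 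Applying \Cref{defnmepsilon} to that perturbed operator yields a vector $y_i$ with $\lVert(I-P_{W_{i-1}})Q_{j_k}Ty_i\rVert>\epsilon$; it does \emph{not} make $Q_{j_k}Ty_i$ itself orthogonal to $W_{i-1}$. What you actually obtain is only a triangular system: $z_i=Q_{j_k}Ty_i$ equals a vector of norm $>\epsilon$ orthogonal to $W_{i-1}$ plus an uncontrolled component in $W_{i-1}$ of norm up to $\lVert T\rVert$. Such a system need not admit a left inverse of norm $\le 1/\epsilon$: already for $k=2$, taking $z_1=\epsilon u_1$ and $z_2=\lVert T\rVert u_1+\epsilon u_2$ with $u_1,u_2$ orthonormal, the smallest singular value of $(\alpha_1,\alpha_2)\mapsto\alpha_1z_1+\alpha_2z_2$ is of order $\epsilon^2/\lVert T\rVert$, and in general it can decay like $\epsilon^k/\lVert T\rVert^{k-1}$. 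So the bound $\lVert U_k\rVert\le 1/\epsilon$, which must hold uniformly in $k$, fails, and this is a multiplicative loss unbounded in $k$ that the $\epsilon'\nearrow\epsilon$ manoeuvre you propose at the end cannot repair.

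The fix is to impose the image-side constraint \emph{exactly}, and it does fit into \Cref{defnmepsilon} after all. The operator $P_{W_{i-1}}Q_{j_k}T$ has rank at most $i-1$, and since the dual of $\bigl(\bigoplus_{n\in F_k}\ell_2^n\bigr)_{c_0}$ is the $\ell_1$-sum of the spaces $\ell_2^n$, the (at most $(i-1)$-dimensional) range of $(P_{W_{i-1}}Q_{j_k}T)^*$ determines subspaces $G_n^0\subseteq\ell_2^n$ with $\dim G_n^0\le i-1$ such that $Q_{j_k}Tx\perp W_{i-1}$ whenever $x(n)\perp G_n^0$ for every $n\in F_k$. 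Adding $G_n^0$ to $\operatorname{span}\{y_1(n),\ldots,y_{i-1}(n)\}$ gives subspaces of dimension at most $2(i-1)$, and applying \Cref{defnmepsilon} to the \emph{original} operator $Q_{j_k}T$ with these enlarged subspaces produces a $y_i$ that is coordinate-wise orthogonal to its predecessors, satisfies $Q_{j_k}Ty_i\perp W_{i-1}$ exactly, and has $\lVert Q_{j_k}Ty_i\rVert>\epsilon$; choosing $j_k$ with $m_\epsilon(Q_{j_k}T)\ge 2k$ provides the necessary slack. (The same duality argument is what proves your claim, also left unproved, that a rank-$r$ perturbation of the codomain lowers $m_\epsilon$ by at most $r$.) With this change the rest of your proof goes through and yields the stated bounds.
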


In order to apply this result, we require specific values of $\epsilon>0$  for which the set $\{m_\epsilon(Q_jT) : j\in \mathbb{N}\}$ is unbounded. Our next lemma addresses this point.

\begin{lemma}\label{m.epsilon}
Let $T \in \mcb(X)\setminus \overline{\mcg}_{c_0}(X)$ be an operator with locally finite matrix, and suppose that the set $\{ m_\epsilon (Q_j T) : j\in\N\}$ is bounded for some $\epsilon>0$. Then $\epsilon\ge\lVert T+\overline{\mcg}_{c_0}(X)\rVert$.

Consequently the set $\{m_\epsilon (Q_j T) : j\in\mathbb{N}\}$ is unbounded whenever  $0<\epsilon<\lVert T+\overline{\mcg}_{c_0}(X)\rVert$.   
\end{lemma}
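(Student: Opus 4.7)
The ``consequently'' clause is the contrapositive of the first assertion, so the task reduces to proving: if $\sup_j m_\epsilon(Q_j T)\le M$ for some $M\in\N_0$, then $\epsilon\ge\lVert T+\overline{\mcg}_{c_0}(X)\rVert$. The plan is to exhibit an operator $S\in\overline{\mcg}_{c_0}(X)$ with $\lVert T-S\rVert\le\epsilon$, from which the desired inequality is immediate.

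By definition of $m_\epsilon$, for every $j\in\N$ we may select subspaces $G_{j,n}\subseteq\ell_2^n$ with $\dim G_{j,n}\le M+1$ and $\lVert Q_j T\circ\bigoplus_n(I-P_{G_{j,n}})\rVert\le\epsilon$. The locally-finite-matrix hypothesis makes $F_n:=\{j\in\N:T_{j,n}\ne 0\}$ finite for each~$n$, and the elementary identity $(I-P_{G_{j,n}})(I-P_{G_n})=I-P_{G_n}$ for orthogonal projections with $G_{j,n}\subseteq G_n$ shows that the amalgamated subspaces $G_n^{(0)}:=\sum_{j\in F_n}G_{j,n}$ satisfy $\lVert T\circ\bigoplus_n(I-P_{G_n^{(0)}})\rVert=\sup_j\lVert Q_j T\circ\bigoplus_n(I-P_{G_n^{(0)}})\rVert\le\epsilon$. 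Hence the candidate $S_0:=T\circ\bigoplus_n P_{G_n^{(0)}}$ does approximate $T$ to within~$\epsilon$, and factors through the Hilbert--$c_0$ space $(\bigoplus_n G_n^{(0)})_{c_0}$.

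The obstacle is that $\dim G_n^{(0)}\le(M+1)\lvert F_n\rvert$ need not be uniform in~$n$: the space $(\bigoplus_n H_n)_{c_0}$ with finite-dimensional Hilbert blocks~$H_n$ is isomorphic to~$c_0$ precisely when $\dim H_n$ is uniformly bounded (in which case one exploits the uniform isomorphism between $\ell_2^{M+1}$ and~$\ell_\infty^{M+1}$), and without this $S_0$ need not lie in~$\overline{\mcg}_{c_0}(X)$. The main technical step, which I expect to be the principal obstacle, is therefore to refine $(G_n^{(0)})$ to a uniformly bounded-dimensional family $(G_n^\infty)$ with $\dim G_n^\infty\le M+1$ still satisfying $\lVert Q_jT\circ\bigoplus_n(I-P_{G_n^\infty})\rVert\le\epsilon$ for \emph{every}~$j\in\N$. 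A natural route is compactness: since each $G_{j,n}$ lies in the compact Grassmannian of subspaces of $\ell_2^n$ of dimension at most $M+1$, a Cantor--diagonal extraction produces a subsequence $(j_k)$ along which $G_{j_k,n}$ converges to some $G_n^\infty$ for every~$n$, and the delicate point is to transfer the $\epsilon$-bound from the subsequence to all of~$\N$. Carrying this out carefully is likely to require the quantitative analysis of $m_\epsilon$ already present in the proof of \cite[Theorem~5.5]{LLR}, of which the present lemma is in effect one direction. Once a uniform family is in hand, $S:=T\circ\bigoplus_n P_{G_n^\infty}$ factors through $(\bigoplus_n G_n^\infty)_{c_0}\cong c_0$, placing $S\in\mcg_{c_0}(X)\subseteq\overline{\mcg}_{c_0}(X)$ with $\lVert T-S\rVert\le\epsilon$.
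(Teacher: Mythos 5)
Your reduction of the lemma to exhibiting $S\in\overline{\mcg}_{c_0}(X)$ with $\lVert T-S\rVert\le\epsilon$ is correct, and you have correctly diagnosed why the naive candidate $S_0$ fails. But the ``main technical step'' you then defer --- replacing the row-dependent subspaces $G_{j,n}$ by a single family $(G_n^\infty)$ with $\dim G_n^\infty\le M+1$ that works for \emph{every} row $j$ --- is a genuine gap, and not one that can be filled: the hypothesis $\sup_j m_\epsilon(Q_jT)\le M$ only provides, for each row separately, subspaces of dimension at most $M+1$ killing that row down to~$\epsilon$, and nothing forces these choices to be compatible across rows. Two rows $j\ne j'$ with $n\in N_j\cap N_{j'}$ may require mutually orthogonal subspaces of~$\ell_2^n$ (e.g.\ when $T_{j,n}$ and $T_{j',n}$ are rank-one operators $\langle\,\cdot\,,u_{j,n}\rangle v_j$ with $(u_{j,n})_{j\in F_n}$ orthonormal), so the dimension that must be removed from~$\ell_2^n$ to serve all rows simultaneously grows like~$\lvert F_n\rvert$, which is finite but unbounded. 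Your compactness proposal does not address this: for a fixed~$n$ only the finitely many $j\in F_n$ are relevant, so a Cantor diagonal over~$j$ produces no meaningful limit in the Grassmannian, and in any case controlling a subsequence of rows says nothing about the remaining ones.

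The idea you are missing is the duplication device of \cite[Construction~4.2]{LLR}, which is how the paper's proof sidesteps exactly this obstruction. One first factors $T=\widetilde{T}V$ with $\lVert V\rVert=1$ through $B=\bigl(\bigoplus_{j\in\N}B_j\bigr)_{c_0}$, where $B_j=\bigl(\bigoplus_{k\in N_j}\ell_2^k\bigr)_{c_0}$ and $\widetilde{T}=\bigoplus_j Q_jTL_j$; that is, each row is given its own private copy of its (finitely many) relevant columns. On~$B$ the projections are allowed to depend on the row as well as the column, so one sets $R=\bigoplus_j\bigoplus_{k\in N_j}P_{j,k}$ with every block of rank at most $m+1$. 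The image of~$R$ then embeds isometrically into $(\ell_2^{m+1}\oplus\ell_2^{m+1}\oplus\cdots)_{c_0}\cong c_0$, so $S=\widetilde{T}RV$ factors through~$c_0$, while $\lVert T-S\rVert=\lVert\widetilde{T}(I_B-R)V\rVert\le\sup_j\lVert Q_jTL_j(I_{B_j}-R_j)\rVert\le\epsilon$. In short, the correct resolution is not to make the subspaces row-independent but to change the domain so that row-dependence becomes harmless.
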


\begin{proof}
  Set $m=\sup\{ m_\epsilon (Q_j T) : j\in\N\}\in\N_0$. Our proof uses the machinery from \cite[Construction~4.2]{LLR}, so we begin by introducing the necessary  notation.    For every $j\in\mathbb{N}$,  define $N_j = \{k\in\mathbb{N} : T_{j,k}\neq 0\}$, which is finite, and $B_j = \bigl(\bigoplus_{k\in N_j}\ell_2^k\bigr)_{c_0}$, and 
  let $L_j\colon B_j\to X$ be the natural inclusion map. Furthermore, set $B =\bigl(\bigoplus_{j\in\N} B_j\bigr)_{c_0}$ and  $\widetilde{T} = \bigoplus_{j\in\N} Q_jTL_j\in\mathscr{B}(B,X)$. Then, as shown in \cite[Con\-struc\-tion~4.2]{LLR}, there exists an operator $V\in\mathscr{B}(X,B)$ with $\lVert V\rVert =1$ such that $T= \widetilde{T}V$. 

The convention described after \Cref{defnmepsilon} means that $m_\epsilon(Q_jTL_j) = m_\epsilon(Q_jT)\le m$ for every $j\in\N$. Therefore we can find orthogonal projections $P_{j,k}\in \mcb(\ell_2^k)$  for $k\in N_j$, each having rank at most $m+1$, such that 
\begin{equation}\label{m.epsilon:eq1} \Bigl\lVert Q_jTL_j\circ\bigoplus_{k\in N_j}(I_{\ell_2^k}-P_{j,k})\Bigr\rVert\leq \epsilon. \end{equation}
Set $R_j = \bigoplus_{k\in N_j}P_{j,k}\in \mcb(B_j)$ and $R = \bigoplus_{j\in\N}R_j\in\mcb(B)$. Since each of the projections~$P_{j,k}$ has rank at most $m+1$, the image of~$R$ embeds isometrically into a direct sum of the form $(\ell_2^{m+1}\oplus\ell_2^{m+1}\oplus\cdots)_{c_0}$, which in turn is isomorphic to $(\ell_\infty^{m+1}\oplus\ell_\infty^{m+1}\oplus\cdots)_{c_0}\equiv c_0$. This shows that~$R$ factors through~$c_0$, and therefore the same is true for the composite operator~$\widetilde{T}RV$. Hence we have
\begin{align*}
    \lVert T + \overline{\mcg}_{c_0}(X)\lVert&\leq \lVert T - \widetilde{T}RV \rVert = \lVert\widetilde{T}(I_B-R)V\rVert\le \Bigl\lVert  \bigoplus_{j\in\N} Q_jTL_j(I_{B_j}-R_j)\Bigr\rVert\,\lVert V\rVert\le\epsilon,
\end{align*}
where the final estimate follows because $\lVert V\rVert =1$ and $\lVert Q_jTL_j(I_{B_j}-R_j)\rVert\le\epsilon$ for every $j\in\N$ by~\eqref{m.epsilon:eq1}. The final clause is immediate.
\end{proof}

\begin{proof}[Proof of Theorem~{\normalfont{\ref{T:LLR_LSZ}}} for $E=c_0$] Take a constant $C>1$ and an operator $T\in\mcb(X)$ with $\lVert T + \overline{\mcg}_{c_0}(X)\rVert = 1$. By \cite[Lemma~2.7(iii)]{LLR}, we can find \mbox{$S\in\mck(X)$} such that $T-S$ has locally finite matrix. Since\[ C^{-\frac12}<1=\lVert T + \overline{\mcg}_{c_0}(X)\rVert = \lVert T - S + \overline{\mcg}_{c_0}(X)\rVert, \] \Cref{m.epsilon} shows that the set $\{ m_{C^{-\frac12}}(Q_j(T-S)) : j\in\N\}$ is unbounded. Consequently, applying \Cref{L:LLR5.5iii}, we can find operators $U_1,V_1\in\mcb(X)$ with $\lVert U_1\rVert\leq C^{\frac12}$ and $\lVert V_1\rVert\leq 1$  such that \mbox{$U_1(T-S)V_1 =I_X$}. Since $U_1SV_1$ is compact, \Cref{R:perturb} implies that there are operators $U_2,V_2\in\mathscr{B}(X)$ with $\lVert U_2\rVert\,\lVert V_2\rVert\leq C^{\frac12}$ such that $U_2(U_1TV_1)V_2 = I_X$. Now the con\-clu\-sion follows by taking $U = U_2U_1$ and $V = V_1V_2$.  
\end{proof}

\subsection*{The case {\mathversion{bold}$E=\ell_1$}.} We now turn our attention to the Banach space $X=\bigl(\bigoplus_{n \in \mathbb{N}} \ell_2^n\bigr)_{\ell_1}$. It  identifies naturally with the dual space of the Banach space $\bigl(\bigoplus_{n \in \mathbb{N}} \ell_2^n\bigr)_{c_0}$ that we considered in the previous subsection, and our proof is essentially a dual version of the proof we have just completed. As observed in~\cite[Remark~2.13]{LSZ},  the dualization is not entirely straightforward because we can no longer perturb operators to arrange that they have locally finite matrix; only finite columns can be achieved. Fortunately, that will suffice to carry out the necessary steps, beginning with the following index originally introduced in  \cite[Defi\-ni\-tion~2.4]{LSZ}, which is the dual version  of \Cref{defnmepsilon} above.

\begin{definition}
For $\epsilon>0$ and an operator $T\in\mathscr{B}\bigl(Y,\bigl(\bigoplus_{j\in M}H_j\bigr)_{\ell_1})$, where~$Y$ can be any Banach space, the index set~$M$ is  finite and $H_j$ is a Hilbert space for each $j\in M$, we define
\begin{multline*}
  n_\epsilon(T) = \sup\biggl\{ n\in \mathbb{N}_0 : \biggl\lVert \Bigl(\bigoplus_{j\in M} (I_{H_j}-P_{G_j})\Bigr) T\biggr\rVert > \epsilon\ \text{for every subspace}\ G_j\ \text{of}\ H_j\biggr.\\[-4mm] \biggl.  \text{with}\ \dim G_j\leq n\ \text{for each}\ j\in M\biggr\}\in\N_0\cup\{\pm\infty\},
\end{multline*}
where $P_{G_j}$ denotes the orthogonal projection of~$H_j$ onto the subspace~$G_j$, as before.
\end{definition}

Hence $n_\epsilon(T)$ is the largest number of dimensions that we can remove from each of the Hilbert spaces in the codomain of~$T$ and still obtain an operator with norm greater than~$\epsilon$.

When $T\in \mcb(X)$ has finite columns, we can apply this definition to the operator~$TJ_k$ for every $k\in \mathbb{N}$ by simply ignoring the Hilbert spaces~$\ell_2^j$ in the co\-domain of~$TJ_k$ corresponding to the  co\-finite set of indices~$j\in\N$ for which $Q_jTJ_k=0$. Then, as one would hope, the quantities $n_\epsilon(TJ_k)$ for $k\in\N$ measure how close the identity operator on~$X$ is to factoring through~$T$. More precisely, we have the following counterpart of \Cref{L:LLR5.5iii}, originally proved as part of \cite[Prop\-o\-si\-tion~2.10]{LSZ}. (The norm bounds on~$U$ and~$V$ are not stated explicitly in~\cite{LSZ}, but follow easily from the proof of the implication (ii)$\Rightarrow$(iii).)

\begin{lemma}\label{LSZ2.10iii} Let $T \in \mcb(X)$ be an operator with finite columns, and suppose that the set $\{n_\epsilon (TJ_k) : k\in \mathbb{N}\}$ is unbounded for some $\epsilon>0$. Then there are operators $U,V\in\mcb(X)$ with $\lVert U\rVert\le 1/\epsilon$ and $\lVert V\rVert\le 1$ such that $VTU=I_X$.
\end{lemma}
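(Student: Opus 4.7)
The plan is to adapt the approach of \Cref{L:LLR5.5iii} to the dual $\ell_1$-setting, using $n_\epsilon$ (controlling how much of the codomain one may delete) in place of $m_\epsilon$. First extract an increasing sequence $k_1<k_2<\cdots$ with $n_\epsilon(TJ_{k_n})$ exceeding a growing threshold such as $n(n+1)/2$; this is possible by the unboundedness hypothesis. The aim is then to build, by induction on $n$, isometric embeddings $U_n\colon\ell_2^n\hookrightarrow\ell_2^{k_n}$ and operators $V_n\colon X\to\ell_2^n$ with $\lVert V_n\rVert\le 1/\epsilon$ satisfying the Kronecker-type relation
\[ V_m\circ T\circ J_{k_n}\circ U_n=\delta_{m,n}\,I_{\ell_2^n}\qquad(m,n\in\N). \]
Once these are in place, setting $U(y)=\sum_n J_{k_n}U_n y_n$ for $y=(y_n)\in X$ and $V(x)=(V_nx)_{n\in\N}$ will yield $VTU=I_X$; the bound $\lVert U\rVert\le 1$ is immediate because the summands $\ell_2^{k_n}$ are pairwise disjoint inside the $\ell_1$-sum, so the focus falls on bounding $\lVert V\rVert$.

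For the inductive step, at stage $n$ the images $TJ_{k_i}U_i[\ell_2^i]$ for $i<n$ occupy a finite-dimensional subspace of $X$ whose support touches only finitely many summands $\ell_2^j$; let $M_n$ be an upper bound on the indices of those summands, and let $H_j^{(n-1)}\subseteq\ell_2^j$ be the finite-dimensional subspaces collecting the cumulative contributions. Because each codomain summand $\ell_2^j$ is only $j$-dimensional, the forbidden subspaces $G_j^{(n)}$ in the definition of $n_\epsilon(TJ_{k_n})$ can be taken equal to $\ell_2^j$ for all $j\le M_n$ (absorbing the ``used'' coordinates entirely) and still fit within the dimensional budget, provided that $n_\epsilon(TJ_{k_n})>M_n$. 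Iterating the defining property of $n_\epsilon$ with successive enlargement of the $G_j^{(n)}$'s to absorb the components of previously chosen $x_l$'s within step~$n$ produces $n$ unit vectors $x_1,\dots,x_n\in\ell_2^{k_n}$ whose images have $\ell_1$-mass exceeding $\epsilon$ in summands $\ell_2^j$ with $j>M_n$, mutually orthogonal within each such summand. A mild Gram--Schmidt adjustment (absorbed into $\epsilon$) yields the isometry $U_n$, and Hahn--Banach produces functionals $\phi_1,\dots,\phi_n\in X^*$ of norm at most $1/\epsilon$, supported in summands $\ell_2^j$ with $j>M_n$ and dual to the vectors $TJ_{k_n}x_l$, which assemble into~$V_n$.

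The main obstacle is that $T$ has only finite columns, not locally finite matrix as in \Cref{L:LLR5.5iii}. In the $c_0$-case one first perturbs $T$ by a compact operator to reduce to the locally-finite-matrix situation, after which disjointness of codomain supports across different $n$'s is essentially automatic. No such perturbation is available here, so one must instead exploit two features specific to the present setting: the summands $\ell_2^j$ of the codomain are finite-dimensional, so the $n_\epsilon$-mechanism permits complete removal of all used early summands once its value outgrows~$M_n$; and the $\ell_\infty$-dual structure of~$X^*$ combined with Hilbertian orthogonality inside each $\ell_2^j$ supplies functionals whose supports are disjoint across $n$. The estimate $\sum_n\lVert V_nx\rVert\le(1/\epsilon)\lVert x\rVert$ then follows from the disjointness of these supports in the $\ell_1$-sum structure of $X$, closing the argument.
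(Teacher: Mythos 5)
The paper itself does not prove this lemma: it is quoted from \cite[Proposition~2.10]{LSZ}, with the remark that the norm bounds follow from the proof given there. So your argument has to stand on its own. Its skeleton --- choose $k_1<k_2<\cdots$ with $n_\epsilon(TJ_{k_n})$ growing fast enough, take $G_j=\ell_2^j$ for all $j\le M_n$ so that the supports of the $V_n$ are pairwise disjoint, and exploit orthogonality inside each remaining summand $\ell_2^j$ --- is the right one, but the central step fails as written.

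The gap is the off-diagonal identity $V_m TJ_{k_n}U_n=0$ in the direction $m<n$, that is, an \emph{earlier} functional applied to a \emph{later} image. The index $n_\epsilon$ only controls the compressed operator $\bigl(\bigoplus_j(I_{\ell_2^j}-P_{G_j})\bigr)TJ_{k_n}$: the unit vectors it produces have \emph{compressed} images orthogonal to the removed subspaces $G_j$, whereas the functionals making up $V_m$ pair with the \emph{actual} images $TJ_{k_n}U_ne$, whose components inside the $G_j$ are uncontrolled --- and the $G_j$ at stage $n$ are precisely where the components of the earlier functionals sit. Disjointness of the supports of the $V_m$ does not help here, because nothing prevents $M_{k_n}$ from meeting the summands supporting $V_m$; support disjointness only kills the reverse direction $m>n$. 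To repair this one must impose the finitely many linear conditions $x\perp (TJ_{k_n})^*\phi^{(m)}_l$ ($m<n$) on the new domain vectors, and justify this with an auxiliary fact such as: restricting $TJ_{k_n}$ to a subspace of $\ell_2^{k_n}$ of codimension $c$ lowers $n_\epsilon$ by at most $c$. (That fact, which uses the Hilbert structure of the domain, is also what legitimises choosing the $x_l$ orthonormal; your ``Gram--Schmidt adjustment absorbed into $\epsilon$'' would destroy the exact constant $1/\epsilon$ that the lemma asserts.) Your proposal contains no such device. A secondary but genuine issue: Hahn--Banach bounds $\lVert\phi_l\rVert\le 1/\epsilon$ on the individual functionals only yield $\lVert V_n\rVert\le\sqrt{n}/\epsilon$ for the assembled map $V_n\colon X\to\ell_2^n$; the bound $1/\epsilon$ requires the components $\phi_{l,j}$ to form an orthonormal system in each $\ell_2^j$ together with a Bessel-plus-Minkowski estimate, which is one of the places where the $\ell_2$-structure of the summands is indispensable and should be spelled out.
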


Our next lemma serves the same purpose as \Cref{m.epsilon}, and its proof is similar.

\begin{lemma}\label{n.epsilon}
  Let $T \in \mcb(X) \setminus \overline{\mcg}_{\ell_1}(X)$ be an operator with finite columns, and suppose that  $\sup\{n_\epsilon(TJ_k) : k\in\mathbb{N}\}$ is finite for some $\epsilon>0$.  Then $\epsilon\ge\lVert T+ \overline{\mcg}_{\ell_1}(X)\rVert$.

  Consequently the set $\{n_\epsilon(TJ_k) : k\in\mathbb{N}\}$ is unbounded whenever $0<\epsilon< \lVert T+ \overline{\mcg}_{\ell_1}(X)\rVert$.
\end{lemma}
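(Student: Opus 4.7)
The plan is to dualize the proof of \Cref{m.epsilon}, swapping the roles of rows and columns and replacing $c_0$-direct sums by $\ell_1$-direct sums. Set $n = \sup\{n_\epsilon(TJ_k) : k\in\mathbb{N}\}$, which is a finite non-negative integer by hypothesis, and then aim to show that $T$ lies within distance $\epsilon$ of an operator in $\overline{\mcg}_{\ell_1}(X)$.

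First I would construct a factorization $T = U\widetilde{T}$ adapted to the finite-columns structure of $T$, playing the role of the decomposition $T=\widetilde{T}V$ in the $c_0$ case. For each $k\in\mathbb{N}$, the set $M_k = \{j\in\mathbb{N} : T_{j,k}\neq 0\}$ is finite, so $A_k = \bigl(\bigoplus_{j\in M_k}\ell_2^j\bigr)_{\ell_1}$ is a finite direct sum with canonical isometric inclusion $L_k\colon A_k\hookrightarrow X$. Setting $A = \bigl(\bigoplus_{k\in\mathbb{N}}A_k\bigr)_{\ell_1}$, I would define $U\in\mcb(A,X)$ by $U((a_k)_{k\in\mathbb{N}}) = \sum_{k\in\mathbb{N}}L_ka_k$ and $\widetilde{T}\in\mcb(X,A)$ by $(\widetilde{T}x)_k = Q_{M_k}TJ_kQ_kx$. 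Routine calculations using the $\ell_1$-structure give $\lVert U\rVert\le 1$, $\lVert\widetilde{T}\rVert\le\lVert T\rVert$ and $U\widetilde{T}=T$.

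Next, the definition of $n_\epsilon$ provides, for each $k\in\mathbb{N}$, subspaces $G_{k,j}\subseteq\ell_2^j$ (for $j\in M_k$) of dimension at most $n+1$ such that $R_k = \bigoplus_{j\in M_k}P_{G_{k,j}}\in\mcb(A_k)$ satisfies $\bigl\lVert(I_{A_k}-R_k)Q_{M_k}TJ_k\bigr\rVert\le\epsilon$. Putting $R = \bigoplus_{k\in\mathbb{N}}R_k\in\mcb(A)$, the $\ell_1$-sum structure of $A$ combined with the fact that the $k$th coordinate of $\widetilde{T}x$ depends only on $Q_kx$ yields $\lVert(I_A-R)\widetilde{T}\rVert\le\epsilon$, and hence $\lVert T-UR\widetilde{T}\rVert=\lVert U(I_A-R)\widetilde{T}\rVert\le\epsilon$. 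It remains to identify $UR\widetilde{T}$ as an element of $\overline{\mcg}_{\ell_1}(X)$: the range of $R$ is, after re-indexing by the countable set $\{(k,j):j\in M_k\}$, an $\ell_1$-direct sum of the finite-dimensional Hilbert spaces $G_{k,j}$, each of dimension at most $n+1$, which embeds isometrically into $\bigl(\bigoplus_{m\in\mathbb{N}}\ell_2^{n+1}\bigr)_{\ell_1}\cong\ell_1$. Thus $R$ factors through $\ell_1$, and so therefore does $UR\widetilde{T}$, giving $\lVert T+\overline{\mcg}_{\ell_1}(X)\rVert\le\epsilon$ as desired; the final clause is an immediate contrapositive.

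I expect the main technical issue to be the verification that the factorization $T=U\widetilde{T}$ exists with the required norm bounds, since, as flagged in \cite[Remark~2.13]{LSZ}, there is no analogue here of the compact perturbation to a locally finite matrix used in the $c_0$ setting. The finite-columns hypothesis is however enough to make the construction of $\widetilde{T}$ boundedly defined, because distinct values of $k$ correspond to disjoint coordinate blocks of $A$; this disjointness is precisely what forces the coordinate-wise estimate on $(I_A-R)\widetilde{T}$ and so prevents the cumulative loss of control that would otherwise occur across the infinite sum.
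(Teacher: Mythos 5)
Your proposal is correct and follows essentially the same route as the paper's proof: the same factorization $T=W\widetilde{T}$ through the $\ell_1$-sum of the finite blocks $\bigl(\bigoplus_{j\in M_k}\ell_2^j\bigr)_{\ell_1}$, the same choice of rank-$(n+1)$ projections from the definition of $n_\epsilon$, the same diagonal estimate $\lVert(I-R)\widetilde{T}\rVert\le\epsilon$, and the same identification of the image of $R$ with a subspace of $(\ell_2^{n+1}\oplus\ell_2^{n+1}\oplus\cdots)_{\ell_1}\cong\ell_1$. The absolute convergence of $\sum_k L_k a_k$ that you flag as the main technical point is exactly how the paper justifies the map $U$ (called $W$ there), so no gap remains.
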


\begin{proof} Set $n=\sup\{n_\epsilon(TJ_k) : k\in\mathbb{N}\}\in\N_0$. The set \mbox{$M_k = \{j\in\mathbb{N} : T_{j,k}\neq 0\}$} is finite for each $k\in\mathbb{N}$ because~$T$ has finite columns. Define $Y_k = \bigl(\bigoplus_{j\in M_k}\ell_2^j\bigr)_{\ell_1}$, and
let $L_k\colon Y_k\to X$ and $S_k\colon X\to Y_k$ denote the natural in\-clu\-sion map and projection, respectively. Further\-more, define $Y= \bigl(\bigoplus_{k\in\N} Y_k\bigr)_{\ell_1}$ and $\widetilde{T} = \bigoplus_{k\in\N} S_kTJ_k\in\mathscr{B}(X,Y)$. Then, for each element $y=(y_k)_{k\in\N}\in Y$, the series $\sum_{k\in\N}L_ky_k$ converges absolutely in~$X$, and the operator $W\in\mathscr{B}(Y,X)$ given by $Wy=\sum_{k\in\N}L_ky_k$ satisfies $\lVert W\rVert = 1$ and  $T=W\widetilde{T}$. 

For every $k\in\N$, we have $n\ge n_\epsilon(TJ_k)=n_\epsilon(S_kTJ_k)$, so we can find orthogonal projections $P_{j,k}\in\mcb(\ell_2^j)$ for  $j\in M_k$, each having rank at most $n+1$, such that
\begin{equation}\label{n.epsilon:eq1} \biggl\lVert\Bigl(\bigoplus_{j\in M_k}(I_{\ell_2^j} - P_{j,k})\Bigr)S_kTJ_k\biggr\rVert\le\epsilon. \end{equation}
  Set $R_k=\bigoplus_{j\in M_k}P_{j,k}\in\mathscr{B}(Y_k)$ and $R=\bigoplus_{k\in\N}R_k\in\mathscr{B}(Y)$, and observe that~$R$ factors through~$\ell_1$ because the image of~$R$ embeds isometrically into $(\ell_2^{n+1}\oplus\ell_2^{n+1}\oplus\cdots)_{\ell_1}$, which is isomorphic to $(\ell_1^{n+1}\oplus\ell_1^{n+1}\oplus\cdots)_{\ell_1}\equiv\ell_1$. Consequently we have
  \[ \lVert T+\overline{\mathscr{G}}_{\ell_1}(X)\rVert\le \lVert T - WR\widetilde{T}\rVert = \lVert W(I_Y-R)\widetilde{T}\rVert\le \lVert W\rVert\,\Bigl\lVert\bigoplus_{k\in\N}(I_{Y_k}-R_k)S_kTJ_k\Bigr\rVert\le\epsilon \]
by~\eqref{n.epsilon:eq1}, as required. The final clause is immediate.
\end{proof}

\begin{proof}[Proof of Theorem~{\normalfont{\ref{T:LLR_LSZ}}} for $E=\ell_1$]  Take $C>1$ and $T\in  \mcb(X)$ with $\lVert T+\overline{\mcg}_{\ell_1}(X)\rVert=1$, and apply \cite[Lemma~2.7(i)]{LLR} to find $S\in \mck(X)$ such that $T-S$ has finite columns. Then $C^{-\frac12}<1 = \lVert  T-S+\overline{\mcg}_{\ell_1}(X)\rVert$, and using  Lemmas~\ref{n.epsilon} and~\ref{LSZ2.10iii} as well as \Cref{R:perturb}, we can complete the proof in the same way as we did for $E=c_0$ above.
\end{proof}

\section{The proof of Theorem~\ref{mainthm}\ref{mainthm1.5}}\label{directsum}
\noindent The aim of this section is to show that every quotient of~$\mathscr{B}(X)$ by one of its closed ideals has a unique algebra norm for each of the Banach spaces 
\begin{equation}\label{directsum:eq0}
   X  = \Bigl(\bigoplus_{n\in\mathbb{N}}\ell_2^n\Bigr)_{c_0}\oplus c_0(\Gamma)\qquad\text{and}\qquad X = \Bigl(\bigoplus_{n\in\mathbb{N}}\ell_2^n\Bigr)_{\ell_1}\oplus\ell_1(\Gamma), 
\end{equation}
where the in\-dex set~$\Gamma$ is an uncountable cardinal number. As before, it suffices to show that the quotient of~$\mathscr{B}(X)$ by each of its non-trivial closed ideals is incompressible, and in line with our previous approach, we shall in fact prove the stronger result that each of these quotients has the $C$-IFP for some $C>1$; see \Cref{P:IFPlargeideals} and \Cref{calkindirectsumincompressible} for details. 

The closed ideals of~$\mathscr{B}(X)$ were classified in~\cite{AL}. Unsurprisingly, this result relies heavily on the ideal classifications for each of the two summands of~$X$. The first of these played the main role in \Cref{c0sum} and was stated explicitly in~\eqref{c0sum:eq}.  
Let us now review the second. The following definition is at the heart of it.

\begin{definition}
  Let $\kappa$ be an infinite cardinal number.
  An operator $T\in\mcb(X,Y)$ between  Banach spaces~$X$ and~$Y$ is \emph{$\kappa$-compact} if, for every $\epsilon > 0$, the  closed unit ball~$B_X$ of~$X$ contains a subset $X_\epsilon$ with $\lvert X_\epsilon\rvert<\kappa$ such that
  \[ \inf\bigl\{ \lVert T(x-y)\rVert :  y\in X_\epsilon \bigr\}\leq \epsilon\qquad (x\in B_X). \]
\end{definition}

We write $\mck_\kappa(X,Y)$ for the subset of $\mcb(X,Y)$ consisting of $\kappa$-compact operators. This is a closed operator ideal. The name ``$\kappa$-compact operator'' refers to the fact that it extends the usual notion of compactness for  operators (which corresponds to $\kappa=\aleph_0$) to uncountable cardinal numbers.  We call the quotient algebra $\mathscr{B}(X)/\mathscr{K}_\kappa(X)$ the \emph{$\kappa$-Calkin algebra}.

Generalizing a theorem of Gramsch~\cite{Gr} and Luft~\cite{Luft} in the non-separable Hilbertian case, Daws \cite[Theorem~7.4]{DAWS} showed that  the lattice of closed ideals of $\mcb(X)$ for each of the long sequence spaces $X = c_0(\Gamma)$ and $X=\ell_p(\Gamma)$, where $1\le p<\infty$ and the index set~$\Gamma$ is an uncountable cardinal number, is given by
\begin{equation}\label{Eq:ClosedIdealsofc0Gamma}
  \begin{split}
  \{0\} \subsetneq \mck(X) = \mck_{\aleph_0}(X)\subsetneq \mck_{\aleph_1}(X) \subsetneq &\cdots  \subsetneq \mck_\kappa(X) \subsetneq \mck_{\kappa^+}(X)\subsetneq\cdots\\ &\cdots\subsetneq \mck_{\Gamma}(X) \subsetneq \mck_{\Gamma^+}(X)=\mcb(X),
  \end{split}
\end{equation}  
where $\kappa^+$ denotes the cardinal successor of a cardinal number~$\kappa$. Johnson, Kania and Schecht\-man \cite[Theorem~1.5]{JKS} gave an alternative statement and proof of this result, with additional details provided in \cite[Theorem~3.7]{HK}.
\smallskip

Let~$X_1$ and~$X_2$ be Banach spaces. As in Nota\-tion~\ref{N:dirsums}, we equip their direct sum~$X_1\oplus X_2$ with the $E$-norm, where $E=c_0$ or $E=\ell_1$, and  for $m\in\{1,2\}$, $J_m\colon X_m\to X_1\oplus X_2$ and $Q_m\colon X_1\oplus X_2\to X_m$ denote the $m^{\text{th}}$ coordinate embedding and projection, respectively. 

For a subset $\mci$ of $\mcb(X_1\oplus X_2)$ and $j,k\in \{1,2\}$,  we define the $(j,k)^{\text{th}}$ \textit{quadrant} of~$\mci$ by
\begin{equation*}
  \mci_{j,k} = \{ Q_jTJ_k : T\in\mci\}\subseteq\mcb(X_k,X_j).
\end{equation*}
Conversely, given subsets $\mci_{j,k}$ of $\mcb(X_k,X_j)$ for  $j,k\in\{1,2\}$, we set
\begin{equation*}
\begin{pmatrix} \mci_{1,1} & \mci_{1,2}\\ \mci_{2,1} & \mci_{2,2}  \end{pmatrix} = \left\{ \begin{pmatrix} T_{1,1} & T_{1,2}\\ T_{2,1} & T_{2,2}
\end{pmatrix} : T_{j,k} \in \mci_{j,k}\ \bigl(j,k\in \{1,2\}\bigr)\right\}\subseteq\mcb(X_1 \oplus X_2).   \end{equation*}
The significance of this notation is due to \cite[Lemma~2.1]{AL}, which states that 
\begin{equation}\label{LemmaDiagonalIdealEq1}  \mci =
  \begin{pmatrix} \mci_{1,1} & \mci_{1,2}\\ \mci_{2,1} & \mci_{2,2}  \end{pmatrix}
\end{equation}
for every ideal~$\mci$ of $\mcb(X_1\oplus X_2)$, and~$\mci_{j,j}$ is an ideal of  $\mcb(X_j)$ for each $j\in\{1,2\}$. Furthermore,   $\mci_{j,k}$ is closed in $\mcb(X_k,X_j)$ for each $j,k\in\{1,2\}$ if and only if~$\mci$ is closed in~$\mcb(X_1\oplus X_2)$.  
 
Now, to state the classification of the closed ideals of~$\mathscr{B}(X)$ for each of the Banach spaces~$X$ given by~\eqref{directsum:eq0} in a unified way and align our notation with that used in the previous paragraph, set $(E,E_\Gamma) = (c_0,c_0(\Gamma))$ or $(E,E_\Gamma) = (\ell_1,\ell_1(\Gamma))$, where~$\Gamma$ is an  uncountable cardinal number, and  observe that
\begin{equation}\label{directsum:eq1}
X = X_1\oplus X_2\qquad\text{for}\qquad X_1 = \Bigl(\bigoplus_{n\in\mathbb{N}}\ell_2^n\Bigr)_E\qquad\text{and}\qquad X_2= E_\Gamma.
\end{equation}
In this notation, the main result of~\cite{AL} states that the  non-trivial closed ideals of~$\mcb(X)$ are 
\begin{equation}\label{directsum:eq3} \mck_{\kappa}(X)\quad (\aleph_0\le\kappa\leq \Gamma)\quad\text{and}\quad
\mcj_\kappa(X)= \begin{pmatrix} \overline{\mcg}_{E}(X_1) & \mcb(X_2,X_1)\\ \mcb(X_1,X_2) & \mck_\kappa(X_2)\end{pmatrix}\quad (\aleph_1\le\kappa\le\Gamma^+). \end{equation}

We split the proof that the quotient of~$\mathscr{B}(X)$ by each of these ideals has the $C$-IFP for some $C>1$ in two cases, beginning with those ideals that are strictly larger than the ideal of compact operators (which we recall is labelled~$\mathscr{K}_{\aleph_0}(X)$ in the list above). This relies on the following elementary general result.

\begin{lemma}\label{IFPlemma2}
Let  $X_1$ and~$X_2$ be Banach spaces, and suppose that~$\mci$ is a closed ideal of~$\mathscr{B}(X_1\oplus X_2)$ which satisfies:
\begin{enumerate}[label={\normalfont{(\roman*)}}]
\item\label{DSjj}  There are constants $C_1,C_2\ge 1$ such that $\mcb(X_j)/\mci_{j,j}$ has the $C_j$-IFP  for $j\in \{1,2\}$.
\item\label{DSjk} $\mci_{j,k} = \mcb(X_k,X_j)$ whenever $j,k \in \{1,2\}$ are distinct. 
\end{enumerate}
Then $\mcb(X_1\oplus X_2)/\mci$ has the $C$-IFP for $C=\max\{C_1,C_2\}$.
\end{lemma}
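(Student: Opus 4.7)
The crux of the argument is that the hypothesis $\mci_{j,k} = \mcb(X_k, X_j)$ for $j \neq k$ forces the quotient $\mcb(X_1\oplus X_2)/\mci$ to ``see'' only the two diagonal corners: by~\eqref{LemmaDiagonalIdealEq1}, every $T \in \mcb(X_1 \oplus X_2)$ satisfies $T \equiv J_1 T_{1,1} Q_1 + J_2 T_{2,2} Q_2 \pmod{\mci}$. My plan is to embed the two quotients $\mcb(X_j)/\mci_{j,j}$ as ``corner'' subalgebras of $\mcb(X_1\oplus X_2)/\mci$ and then reduce the $C$-IFP to the hypothesised $C_j$-IFPs on the diagonal.

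For $j\in\{1,2\}$ I would show that the formula $\psi_j(a + \mci_{j,j}) = J_j a Q_j + \mci$ defines an injective, contractive algebra homomorphism $\psi_j\colon \mcb(X_j)/\mci_{j,j} \to \mcb(X_1\oplus X_2)/\mci$. It is well-defined since $J_j\, \mci_{j,j}\, Q_j \subseteq \mci$, multiplicative since $Q_j J_j = I_{X_j}$ and $Q_k J_j = 0$ for $k \neq j$, and injective since $Q_j (J_j a Q_j) J_j = a$. Contractivity is obtained by computing $\lVert J_j a Q_j + \mci\rVert$ using only corrections of the form $J_j k Q_j$ with $k \in \mci_{j,j}$, which is legitimate because $\mci$ contains every matrix with zero $(j,j)$-block. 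A parallel block-optimisation argument shows that the quotient norm depends only on the diagonal cosets and in fact satisfies $\lVert T + \mci\rVert = \max\bigl(\lVert T_{1,1}+\mci_{1,1}\rVert, \lVert T_{2,2}+\mci_{2,2}\rVert\bigr)$ for the direct sum norms at hand (where $\lVert J_j\rVert = \lVert Q_j\rVert = 1$).

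With these preliminaries settled, given $T$ with $\lVert T + \mci\rVert = 1$, I may assume without loss of generality that $\lVert T_{1,1}+\mci_{1,1}\rVert \geq 1$. Applying the $C_1$-IFP (in its equivalent formulation for non-zero elements) to $T_{1,1}+\mci_{1,1}$ produces $b_1, c_1 \in \mcb(X_1)$ with $\lVert b_1 + \mci_{1,1}\rVert\,\lVert c_1 + \mci_{1,1}\rVert \leq C_1/\lVert T_{1,1}+\mci_{1,1}\rVert \leq C_1 \leq C$ and $b_1 T_{1,1} c_1 + \mci_{1,1}$ a non-zero idempotent. Taking $R = J_1 b_1 Q_1$ and $S = J_1 c_1 Q_1$, a direct computation gives $RTS + \mci = \psi_1(b_1 T_{1,1} c_1 + \mci_{1,1})$, a non-zero idempotent by injectivity of $\psi_1$, while contractivity of $\psi_1$ yields $\lVert R + \mci\rVert\,\lVert S + \mci\rVert \leq C$. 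The case $\lVert T_{2,2}+\mci_{2,2}\rVert \geq 1$ is symmetric. I expect no real obstacle: the argument reduces to block-matrix bookkeeping, with the only delicate point being that the specific block structure of $\mci$ (all off-diagonal blocks being full) is exactly what makes the corner maps $\psi_j$ contractive and forces the quotient norm to collapse to the max of the diagonal norms.
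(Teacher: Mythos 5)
Your proposal is correct and follows essentially the same route as the paper: both use the identity $\lVert T+\mci\rVert=\max\bigl\{\lVert T_{1,1}+\mci_{1,1}\rVert,\lVert T_{2,2}+\mci_{2,2}\rVert\bigr\}$ (a consequence of hypothesis~(ii) and the fact that diagonal operators on a $c_0$- or $\ell_1$-sum have max norm), apply the $C_j$-IFP to the relevant diagonal block, and transfer the resulting idempotent via $a\mapsto J_jaQ_j$. Your extra care in formalising the corner embeddings $\psi_j$ and their contractivity is a slightly more explicit rendering of exactly the bookkeeping the paper performs.
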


\begin{proof}
  Take $T = (T_{j,k})_{j,k=1}^2\in\mathscr{B}(X_1\oplus X_2)$ with $\lVert T+\mci\rVert=1$.  Condition~\ref{DSjk} implies that $\lVert T + \mci\rVert  = \max\bigl\{\lVert T_{1,1} + \mci_{1,1} \rVert, \lVert T_{2,2} + \mci_{2,2} \rVert\bigr\}$, so we can choose $j \in \{1,2\}$ such that $\lVert T_{j,j}+\mci_{j,j}\rVert = 1$. Now condition~\ref{DSjj} shows that there are operators $R,S \in \mcb(X_j)$ with $\lVert R+\mci_{j,j}\rVert\, \lVert S+\mci_{j,j}\rVert\leq C_j$ such that $RT_{j,j}S + \mci_{j,j}$ is a non-zero idempotent.
  
  Set $U=J_j R Q_j$ and $V = J_j SQ_j$. Then $U,V\in \mcb(X_1\oplus X_2)$ satisfy \[ \lVert U + \mci \rVert\, \lVert V+\mci\rVert = \lVert R+ \mci_{j,j}\rVert\, \lVert S+\mci_{j,j}\rVert \leq C_j\le \max\{C_1,C_2\}, \] and
$(U+\mci)(T+\mci)(V+\mci) = J_j RT_{j,j}S Q_j + \mci$ is an idempotent, which is non-zero because  $RT_{j,j}S \notin \mci_{j,j}$.
\end{proof}

In order to apply this lemma, we require the following result, which Ware \cite[Propositions~6.3.1--6.3.2]{GW} observed was established in Daws' proofs of \cite[Theorems~6.2 and~7.3]{DAWS}. (We note in passing that we do not know whether the constant~$4$ in~\ref{kappacalkinIFP:1} is optimal.)

\begin{theorem}\label{kappacalkinIFP} Let~$\Gamma$ and~$\kappa$ be uncountable cardinal numbers with $\kappa\le\Gamma$. Then:
\begin{enumerate}[label={\normalfont{(\roman*)}}]
\item\label{kappacalkinIFP:1}   The $\kappa$-Calkin algebras of~$c_0(\Gamma)$ and~$\ell_p(\Gamma)$ for $1<p<\infty$ have the $4$-IFP.
\item\label{kappacalkinIFP:2}  The $\kappa$-Calkin algebra of~$\ell_1(\Gamma)$ has the $C$-IFP for every constant $C>1$.
\end{enumerate}
\end{theorem}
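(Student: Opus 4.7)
The plan is to deduce the idempotent factorization property by extracting, for each operator that is not $\kappa$-compact, a long family of disjointly supported vectors on which $T$ is bounded below, and realising the resulting near-copy of $X(\Lambda)$ as a well-complemented subspace of $X$. This follows the approach implicit in Daws' proofs of the ideal classification; our task is to track the relevant norms, as already observed by Ware.

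Given $T \in \mcb(X)$ with $\|T+\mck_\kappa(X)\| = 1$, the failure of $\kappa$-compactness provides, for any preassigned $\epsilon > 0$, a family $(x_\alpha)_{\alpha<\kappa}$ of unit vectors in $X$ such that the images $(Tx_\alpha)$ are pairwise bounded away from each other by a constant exceeding $1-\epsilon$. Since each element of $X$ is countably supported and finitely-supported vectors are dense, a transfinite $\Delta$-system / gliding-hump extraction produces a subset $\Lambda \subseteq \Gamma$ of cardinality $\kappa$ and norm-$1$ vectors $(f_\gamma)_{\gamma\in\Lambda}$ with pairwise disjoint finite supports such that $\|Tf_\gamma\| > 1-2\epsilon$. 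A further Bessaga--Pelczy\'nski perturbation, together with a Rosenthal-type extraction (compare \Cref{Rosenthal}), refines $\Lambda$ so that $(Tf_\gamma)_{\gamma\in\Lambda}$ is equivalent to the unit vector basis of $X(\Lambda) \in \{c_0(\Lambda), \ell_p(\Lambda)\}$.

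The closed span $Y = \overline{\operatorname{span}}\{f_\gamma : \gamma\in\Lambda\}$ is then isometric to $X(\Lambda)$ and $1$-complemented in $X$ by the canonical coordinate projection $P_Y$. The restriction $T|_Y : Y \to T[Y]$ is an isomorphism with $\|(T|_Y)^{-1}\|$ close to~$1$, and a Hahn--Banach extension argument in the style of \Cref{DRT} yields a projection $P_{T[Y]} : X \to T[Y]$ of norm at most $2+\epsilon$ when $X \in \{c_0(\Gamma), \ell_p(\Gamma)\}$ with $1<p<\infty$, and at most $1+\epsilon$ when $X = \ell_1(\Gamma)$, exploiting the stability of $\ell_1$-sums under almost-isometric block extraction. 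Writing $J_Y : Y \hookrightarrow X$ for the inclusion, the operators $R = J_Y(T|_Y)^{-1}P_{T[Y]}$ and $S = J_Y P_Y$ in $\mcb(X)$ satisfy the direct computation $RTS = J_Y P_Y$, which is a non-zero idempotent whose range $Y$ has density character $\kappa$, hence lies outside $\mck_\kappa(X)$. Moreover,
\begin{equation*}
\|R\|\,\|S\| \le \|(T|_Y)^{-1}\|\,\|P_{T[Y]}\|\,\|P_Y\|,
\end{equation*}
which is bounded by $4$ in part~\ref{kappacalkinIFP:1} and by any preassigned $C>1$ in part~\ref{kappacalkinIFP:2}.

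The main obstacle is executing the triple extraction---disjoint supports for $(f_\gamma)$, basic-sequence structure for $(Tf_\gamma)$, and well-complementedness of $T[Y]$---while simultaneously keeping $\|(T|_Y)^{-1}\|$ close to~$1$. The $c_0$ case is the most delicate, since the loss of a factor of $2$ in the Hahn--Banach complementation step appears unavoidable by this approach, which is what prevents the method from yielding a constant smaller than~$4$ in part~\ref{kappacalkinIFP:1}.
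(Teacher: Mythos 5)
The paper does not actually prove this theorem: it is quoted from Ware's thesis, which in turn observes that the bounds can be read off from Daws' ideal-classification proofs. Your proposal follows the same broad strategy as those cited proofs (extract a $\kappa$-sized disjointly supported family on which $T$ is bounded below, complement the family and its image, and assemble the idempotent), and the final assembly $RTS=J_YP_Y$ is correct as far as it goes. The gap is that the quantitative core --- which is the entire content of the theorem --- is asserted rather than proved, and your own constants do not add up to the claimed bound.

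Concretely, you assert that $\lVert T+\mck_\kappa(X)\rVert=1$ yields images pairwise separated by more than $1-\epsilon$, that the subsequent extractions preserve $\lVert Tf_\gamma\rVert>1-2\epsilon$ together with near-isometric equivalence of $(Tf_\gamma)$ to the unit vector basis, and that $\lVert(T|_Y)^{-1}\rVert$ is close to~$1$; you yourself flag the simultaneous achievability of these as ``the main obstacle'' without resolving it. If they were all achievable, your factorization would give $\lVert R\rVert\,\lVert S\rVert\le(1+\epsilon)(2+\epsilon)$ for $c_0(\Gamma)$ --- the constant $2$, not $4$ --- and for $\ell_p(\Gamma)$ with $1<p<\infty$ it would give $1+\epsilon$, because an (almost) disjointly supported normalized family in $\ell_p(\Gamma)$ spans an (almost) $1$-complemented subspace via a coordinate-type norm-one projection, so no Hahn--Banach factor of~$2$ arises there at all. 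Your accounting would therefore place $\ell_p(\Gamma)$, $1<p<\infty$, in part~\ref{kappacalkinIFP:2} rather than part~\ref{kappacalkinIFP:1}, and would improve the constant~$4$, which the paper explicitly says it does not know how to improve. This tells you that the genuine loss occurs in the step you have skipped: relating the quotient norm modulo $\mck_\kappa$ to the lower bound achievable for $\inf_\gamma\lVert Tf_\gamma\rVert$, and hence for $\lVert(T|_Y)^{-1}\rVert^{-1}$, over a disjointly supported family whose \emph{images} are simultaneously brought into almost disjointly supported (block) position. That step is lossless for $\ell_1(\Gamma)$, where $\lVert S\rVert=\sup_\gamma\lVert Se_\gamma\rVert$ for every operator~$S$, and lossy for $c_0(\Gamma)$ and $\ell_p(\Gamma)$, $1<p<\infty$; this, and not the complementation of~$T[Y]$, is the true source of the split between the two parts. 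A secondary issue: \Cref{DRT} and \Cref{Rosenthal} as stated in the paper concern separable~$c_0$, whereas here the idempotent must have range of density character~$\kappa$ to survive in the $\kappa$-Calkin algebra, so you need the long (nonseparable) versions of these complementation results, which are not the statements you invoke.
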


\begin{theorem}\label{P:IFPlargeideals}
    Let~$X$ be given  by~\eqref{directsum:eq0}. Then, for every proper closed ideal~$\mci$ of~$\mcb(X)$ such that~$\mck(X)\subsetneq \mci$, the quotient algebra $\mcb(X)/\mci$ has the $4$-IFP and is therefore uniformly incompressible.
\end{theorem}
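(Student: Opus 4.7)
The plan is to enumerate the ideals $\mci$ satisfying the hypotheses by means of the classification in~\eqref{directsum:eq3}, verify the $4$-IFP in each case, and then invoke Proposition~\ref{P:JPSlemma0.2} to deduce uniform incompressibility. The closed ideals strictly between $\mck(X)$ and $\mcb(X)$ fall into two families: $\mck_\kappa(X)$ for $\aleph_1\le\kappa\le\Gamma$, and $\mcj_\kappa(X)$ for $\aleph_1\le\kappa\le\Gamma^+$, so the proof splits into cases accordingly.

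I would first handle the family $\mci = \mck_\kappa(X)$. Since $X_1$ is separable and $\kappa\ge\aleph_1$, a countable-dense-subset argument shows that every operator defined on $X_1$ is $\kappa$-compact, and every operator into $X_1$ is $\kappa$-compact because its range is separable. Consequently $\mci_{1,1} = \mcb(X_1)$, $\mci_{j,k} = \mcb(X_k,X_j)$ for $j\ne k$, and $\mci_{2,2} = \mck_\kappa(X_2)$. The homomorphism $T\mapsto T_{2,2} + \mck_\kappa(X_2)$ is therefore a surjection $\mcb(X)\to \mcb(X_2)/\mck_\kappa(X_2)$ with kernel $\mck_\kappa(X)$. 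Using the identity $\lVert A\oplus B\rVert = \max\{\lVert A\rVert,\lVert B\rVert\}$, valid for block-diagonal operators with respect to both the $c_0$- and the $\ell_1$-direct sum norm, one checks that the induced algebra isomorphism is isometric, and hence the $4$-IFP on the codomain supplied by Theorem~\ref{kappacalkinIFP} transfers back to $\mcb(X)/\mck_\kappa(X)$.

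For the family $\mci = \mcj_\kappa(X)$ the treatment splits further. When $\aleph_1\le\kappa\le\Gamma$, both diagonal quadrants $\mci_{1,1} = \overline{\mcg}_E(X_1)$ and $\mci_{2,2} = \mck_\kappa(X_2)$ are proper ideals, while the off-diagonal quadrants are full by~\eqref{directsum:eq3}. Theorem~\ref{T:LLR_LSZ} supplies the $C$-IFP for $\mcb(X_1)/\overline{\mcg}_E(X_1)$ for every $C>1$ (take $C=4$), and Theorem~\ref{kappacalkinIFP} supplies the $4$-IFP for $\mcb(X_2)/\mck_\kappa(X_2)$, so Lemma~\ref{IFPlemma2} furnishes the $4$-IFP for $\mcb(X)/\mcj_\kappa(X)$. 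For the remaining case $\kappa = \Gamma^+$ we have $\mci_{2,2} = \mcb(X_2)$, and the same ``project onto quadrant $(1,1)$'' argument used in the $\mck_\kappa$ case produces an isometric algebra isomorphism $\mcb(X)/\mcj_{\Gamma^+}(X)\cong \mcb(X_1)/\overline{\mcg}_E(X_1)$, from which the $4$-IFP follows via Theorem~\ref{T:LLR_LSZ}.

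The analytic content is entirely absorbed by Theorems~\ref{T:LLR_LSZ} and~\ref{kappacalkinIFP}, so the remaining work is essentially bookkeeping around these ``projection'' reductions and verification that the induced quotient maps are isometric. The only mild subtlety I anticipate is that Lemma~\ref{IFPlemma2} as stated requires both diagonal quotients to be non-trivial, a hypothesis which fails in the two reduction cases; this is sidestepped here by the isomorphism argument, but one could equally record a minor generalization of the lemma that handles the case where only one diagonal quadrant of $\mci$ is proper.
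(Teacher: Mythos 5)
Your proposal is correct and follows essentially the same route as the paper: compute the quadrants of each ideal from the classification~\eqref{directsum:eq3}, feed the diagonal quotients into \Cref{IFPlemma2} via Theorems~\ref{T:LLR_LSZ} and~\ref{kappacalkinIFP} when both are non-trivial, and handle the degenerate cases ($\mci_{1,1}=\mcb(X_1)$ or $\mci_{2,2}=\mcb(X_2)$) by passing to the induced isomorphism $\mcb(X)/\mci\cong\mcb(X_k)/\mci_{k,k}$. The only difference is presentational: you make the isometry of that induced isomorphism explicit via the block-diagonal norm identity, whereas the paper simply asserts the isomorphism.
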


\begin{proof}
Inspecting the list~\eqref{directsum:eq3} of proper closed ideals of~$\mathscr{B}(X)$ which are strictly larger than~$\mck(X)$, we see that
  \begin{align}
  \mci_{1,1} &= \begin{cases} \overline{\mathscr{G}}_E(X_1)\ &\text{for}\ \mci =  \mcj_\kappa(X),\ \text{where}\ \aleph_1\le\kappa\le\Gamma^+,\\ \mathscr{B}(X_1)\ &\text{for}\  \mci = \mathscr{K}_\kappa(X),\ \text{where}\ \aleph_1\le\kappa\le\Gamma, \end{cases}\notag\\
   \mci_{2,2} &= \begin{cases} 
    \mathscr{K}_\kappa(X_2)\ &\text{for}\ \mci =  \mck_\kappa(X)\ \text{or}\ \mci =  \mcj_\kappa(X),\ \text{where}\ \aleph_1\le\kappa\le\Gamma,\\
    \mathscr{B}(X_2)\ &\text{for}\  \mci =  \mcj_{\Gamma^+}(X),\end{cases}\notag\\
  \intertext{and}  
  \mci_{j,k}&=\mathscr{B}(X_k,X_j)\qquad (j,k\in\{1,2\},\, j\ne k).\label{P:IFPlargeideals:eq1}    
  \end{align}
  Indeed, this is clear from the definition if $\mci=\mathscr{J}_\kappa(X)$ for some $\aleph_1\le\kappa\le\Gamma^+$. Other\-wise $\mci=\mathscr{K}_\kappa(X)$ for some $\aleph_1\le\kappa\le\Gamma$, so $\mci_{j,k}=\mathscr{K}_\kappa(X_k,X_j)$, and the above identities follow from the fact that every operator with separable range is $\aleph_1$-compact. 
  
  Hence Theorems~\ref{T:LLR_LSZ}  and~\ref{kappacalkinIFP} imply that the quotient algebras $\mathscr{B}(X_j)/\mci_{j,j}$ for $j\in\{1,2\}$ have the 4-IFP whenever they are non-zero, and~\eqref{P:IFPlargeideals:eq1} ensures that \Cref{IFPlemma2} applies. It follows that~$\mathscr{B}(X)/\mci$ has the 4-IFP if $\mathscr{B}(X_1)/\mci_{1,1}$ and $\mathscr{B}(X_2)/\mci_{2,2}$ are both non-zero. Otherwise we have $\mci_{j,j}=\mathscr{B}(X_j)$ for $j=1$ or $j=2$, and $\mathscr{B}(X)/\mci\cong\mathscr{B}(X_k)/\mci_{k,k}$, where $k=2$ if $j=1$ and $k=1$ if $j=2$, so~$\mathscr{B}(X)/\mci$ also has the 4-IFP in this case.  
  \end{proof}

It remains to tackle the Calkin algebra of~$X$. For brevity, we write $\lVert T\rVert_e$ for the essential norm of an operator $T\in\mathscr{B}(X,Y)$; that is, 
\begin{equation}\label{Eq:essentialnorm}
     \lVert T\rVert_e = \lVert T + \mathscr{K}(X,Y)\rVert.
\end{equation}  
Furthermore, for a set~$\Gamma$,  $[\Gamma]^{<\omega}$ denotes the collection of finite subsets of~$\Gamma$. 

\begin{lemma}\label{L:Calkin_c0_ell1_FinDim}
  Let $X = \bigl(\bigoplus_{\gamma\in\Gamma} F_\gamma\bigr)_{E}$ and  $Y = \bigl(\bigoplus_{\xi\in\Xi} G_\xi\bigr)_{E}$, where $E=c_0$ or $E=\ell_1$, the index sets~$\Gamma$ and~$\Xi$ are infinite and~$F_\gamma$ and $G_\xi$ are non-zero, finite-dimensional Banach spaces for every $\gamma\in\Gamma$ and $\xi\in\Xi$. 
Then, for every non-compact operator $T\in\mathscr{B}(X,Y)$ with $\lVert T\rVert_e=1$ and every constant $C>1$, there are operators $U\in\mathscr{B}(Y,E)$ and $V\in\mathscr{B}(E,X)$ such that 
    \begin{equation}\label{L:Calkin_c0_ell1_FinDim:eq0}
      UTV = I_{E}\qquad\text{and}\qquad \lVert U\rVert\,\lVert V\rVert\le C. 
    \end{equation}
\end{lemma}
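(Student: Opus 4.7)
The plan is to exploit $\lVert T\rVert_e = 1$ to construct a sequence of unit vectors $(x_n)$ in $X$ with pairwise disjoint finite supports $F_n \subset \Gamma$ and $\lVert Tx_n\rVert > 1 - \epsilon$ for any preassigned $\epsilon > 0$. The key is that for every finite $F \subset \Gamma$, the restriction of $T$ to the tail $\bigl(\bigoplus_{\gamma \notin F} F_\gamma\bigr)_E$ still has essential norm $1$: any compact approximation of this tail combines with the finite-rank operator $T P_F$ to give a compact approximation of $T$ of no greater norm, using that the $E$-norm of a block-diagonal operator on such a sum is the maximum of its block norms. Iterating yields the desired $(x_n)$, and their disjoint supports in the $E$-sum make $V_0 \colon E \to X$ given by $V_0 e_n = x_n$ an isometric embedding.

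In the case $E = c_0$, \Cref{Rosenthal} applied to $TV_0 \colon c_0 \to Y$ yields an infinite $N \subset \N$ such that the restriction $R := (TV_0)|_Z$ to $Z := \overline{\operatorname{span}}\{e_n : n \in N\} \cong c_0$ is bounded below by any $\eta < 1 - \epsilon$. Moreover $B_{Y^*}$ is weak* sequentially compact: each element of $Y^* \cong \bigl(\bigoplus_\xi G_\xi^*\bigr)_{\ell_1}$ has countable support, so a bounded sequence in $Y^*$ reduces to the separable case. \Cref{L:BA195} applied to $R$ therefore yields operators $U_1 \in \mathscr{B}(Y, c_0)$ and $V_1 \in \mathscr{B}(c_0, Z)$ with $U_1 R V_1 = I_{c_0}$ and $\lVert U_1\rVert\, \lVert V_1\rVert \le C_1^2/\eta$ for arbitrary $C_1 > 1$. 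Setting $V := V_0|_Z V_1 \colon c_0 \to X$, we obtain $U_1 T V = I_{c_0}$ with $\lVert U_1\rVert\, \lVert V\rVert \le C_1^2/\eta$, which falls below $C$ once $C_1$ is close to $1$ and $\epsilon$ is close to $0$.

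In the case $E = \ell_1$ we construct $U$ explicitly via Hahn--Banach, but first replace $T$ by a compact perturbation $T' = T - K_0$ with $\lVert T'\rVert \le 1 + \epsilon_0$ so as to control off-diagonal terms. Working with $(I - P_{S_{<n}}) T'$ (which still has essential norm $1$ because $P_{S_{<n}}$ has finite-dimensional range), the iteration yields unit vectors $x_n$ with disjoint finite supports in $\Gamma$ together with finite disjoint $S_n \subset \Xi$ satisfying $\lVert (I - P_{S_{<n}}) T' x_n\rVert > 1 - \epsilon_1$. Additivity of the $\ell_1$-norm over disjoint supports forces $\lVert P_{S_{<n}} T' x_n\rVert \le \epsilon_0 + \epsilon_1$, and a finite truncation inside $\Xi \setminus S_{<n}$ yields $\lVert T' x_n - P_{S_n} T' x_n\rVert \le \delta$ for any preassigned $\delta > \epsilon_0 + \epsilon_1$. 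Choose Hahn--Banach norming functionals $\phi_n \in Y^*$ of norm $\le 1$ supported on $S_n$ with $\phi_n(T'x_n) = \lVert P_{S_n} T'x_n\rVert$. The $\ell_1$-sum structure of $Y$ and disjointness of the $S_n$ ensure that $U_0 \colon y \mapsto (\phi_n(y))_n$ is a contraction $Y \to \ell_1$, and a direct calculation gives $U_0 T' V_0 = D + R_0$ where $D$ is diagonal with entries in $[1 - \epsilon_1 - \delta,\, 1 + \epsilon_0]$ and $\lVert R_0\rVert \le \delta$.

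For sufficiently small $\epsilon_0, \epsilon_1, \delta$, Neumann inversion produces $\tilde U := (U_0 T' V_0)^{-1} U_0$ with $\tilde U T' V_0 = I_{\ell_1}$ and $\lVert \tilde U\rVert$ arbitrarily close to $1$. Since $\tilde U T V_0 = I_{\ell_1} + \tilde U K_0 V_0$ is a compact perturbation of the identity on $\ell_1$, \Cref{L:perturb} applied to $\ell_1$ (whose standard basis is shift-invariant and bimonotone) provides $U_2, V_2 \in \mathscr{B}(\ell_1)$ of product norm arbitrarily close to $1$ such that $U_2 (\tilde U T V_0) V_2 = I_{\ell_1}$. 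Setting $U := U_2 \tilde U$ and $V := V_0 V_2$ yields $UTV = I_{\ell_1}$ with $\lVert U\rVert\, \lVert V\rVert < C$. The main obstacle is the $\ell_1$ case, where the compact preprocessing, Hahn--Banach construction, Neumann inversion and final application of \Cref{L:perturb} must all be chained together with a controlled product of small losses to ensure the final bound stays below the prescribed $C$.
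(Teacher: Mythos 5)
Your proof is correct, and its overall strategy is the paper's: a gliding hump produces disjointly supported unit vectors $x_n$ on which $T$ is essentially norm-attaining, giving an isometric embedding $V_0$ of $E$ into $X$, after which one complements or inverts. For $E=c_0$ your argument is essentially identical to the paper's — your direct appeal to \Cref{Rosenthal} and \Cref{L:BA195} simply unpacks the paper's single application of \Cref{P:AequivB}, whose relevant implication is proved by exactly those two results, and your countable-support argument for weak* sequential compactness of $B_{Y^*}$ is the one the paper uses. For $E=\ell_1$ the ingredients again coincide (disjoint blocks $S_n$ in the codomain, Hahn--Banach norming functionals supported on them yielding a contraction $U_0\colon Y\to\ell_1$, and a Neumann-series inversion of the near-diagonal operator $U_0T'V_0$), but you make one structurally different choice: you subtract a \emph{general} compact operator $K_0$ to achieve $\lVert T'\rVert\le 1+\epsilon_0$, which forces you to restore $T$ at the end by applying \Cref{L:perturb} to the compact perturbation $I_{\ell_1}+\tilde U K_0 V_0$. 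The paper instead cuts off with a specific finite-rank coordinate projection $I_Y-R_{A_0}$ of $Y$ (using that for $\ell_1$-sums of finite-dimensional spaces the essential norm is $\inf_A\lVert (I_Y-R_A)T\rVert$), and absorbs the factor $I_Y-R_{A_0}$ into $U$, so that $UTV=I_{\ell_1}$ holds exactly with no correction step and with the explicit constant coming from $\eta=\tfrac{C-1}{3C}$; it also picks each $x_n$ supported on a single coordinate of $X$ rather than a finite block. Both routes close the argument with the required bound; yours trades the careful choice of cutoff for one extra, harmless application of \Cref{L:perturb}.
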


\begin{proof}
For $A\in[\Gamma]^{<\omega}$, set $R_A = \sum_{\gamma\in A} J_\gamma Q_\gamma\in\mathscr{B}(X)$. 
This is a finite-rank projection, and both~$R_A$ and~$I_X-R_A$ have norm~$1$ (provided $A\ne\emptyset$). (In the second part of the proof, we shall also use this notation for the Banach space~$Y$; this should not cause any confusion.) 

In the case where $E=c_0$, take $\eta\in(C^{-1},1)$. Combining the density of the subspace $\operatorname{span}\{ J_\gamma(F_\gamma) : \gamma\in \Gamma\}$ in~$X$ with the fact that $\lVert T(I_X-R_A)\rVert\ge\lVert T\rVert_e=1$ for each $A\in[\Gamma]^{<\omega}$, we can recursively construct a sequence $(A_n)_{n\in\N}$ of non-empty disjoint sets in~$[\Gamma]^{<\omega}$ such that, for each $n\in\N$, there is a unit vector $x_n\in\operatorname{span}\{ J_\gamma(F_\gamma) : \gamma\in A_n\}$ for which $\lVert Tx_n\rVert > \eta$.
Since the sets~$A_n$ are disjoint and $\lVert x_n\rVert =1$, we can define a linear isometry $V_1\in\mathscr{B}(c_0,X)$ by $V_1e_n = x_n$ for each $n\in\N$. 

We can now apply \Cref{P:AequivB} to the operator \mbox{$TV_1\in\mathscr{B}(c_0,Y)$}, noting that the unit vector basis $(e_n)_{n\in\N}$ of~$c_0$ is a sequence which satisfies~\eqref{P:AequivB:eq1} because $\inf_{n\in\N}\lVert TV_1e_n\rVert\ge\eta > 1/C$, and that the unit ball of $Y^*\equiv  \bigl(\bigoplus_{\xi\in\Xi} G_\xi^*\bigr)_{\ell_1}$ is weak* sequentially compact, for instance because its elements have countable support, so every sequence is contained in a subspace of the form $\bigl(\bigoplus_{\xi\in\Xi_0} G_\xi^*\bigr)_{\ell_1}$ for some countable subset~$\Xi_0$ of~$\Xi$. It follows that there are operators $U\in\mathscr{B}(Y,c_0)$ and $V_2\in\mathscr{B}(c_0)$ such that $UTV_1V_2 = I_{c_0}$ and $\lVert U\rVert\,\lVert V_2\rVert<C$, so~\eqref{L:Calkin_c0_ell1_FinDim:eq0} is satisfied for the operator $V = V_1V_2$.

In the case where $E=\ell_1$, we no longer have a Rosenthal-style result such as \Cref{P:AequivB}, so we build the factorization from scratch. Set $\eta = \frac{C-1}{3C}\in (0,\frac{1}{3})$. We shall require two elementary facts about operators $S\in\mathscr{B}(X,Y)$ between $\ell_1$-direct sums of finite-dimensional spaces:
\begin{align}
\lVert S\rVert_e &= \inf\bigl\{\lVert(I_Y-R_A)S\rVert : A\in[\Xi]^{<\omega}\bigr\},\label{L:Calkin_c0_ell1_FinDim:eq1}\\
\lVert S\rVert &= \sup\bigl\{ \lVert SJ_\gamma w\rVert : \gamma\in\Gamma,\, w\in B_{F_\gamma}\bigr\}.\label{L:Calkin_c0_ell1_FinDim:eq2}
\end{align}
(In fact, \eqref{L:Calkin_c0_ell1_FinDim:eq1} holds true no matter what form the domain~$X$ of~$S$ has, while \eqref{L:Calkin_c0_ell1_FinDim:eq2} holds true irrespective of the codomain~$Y$.) 
Applying~\eqref{L:Calkin_c0_ell1_FinDim:eq1} to the given operator~$T$ with essential norm~$1$, we can find $A_0\in[\Xi]^{<\omega}$ such that  
\begin{equation}\label{L:Calkin_c0_ell1_FinDim:eq3}
\lVert (I_Y-R_{A_0})T\rVert\le 1+\eta.    
\end{equation}
Now, using~\eqref{L:Calkin_c0_ell1_FinDim:eq2} together with the fact that $\lVert(I_Y-R_A)T(I_X-R_B)\rVert\ge \lVert T\rVert_e>1-\eta$ whenever $A\in[\Xi]^{<\omega}$ and $B\in[\Gamma]^{<\omega}$, 
we can recursively choose a sequence $(A_n)_{n\in\N}$ of dis\-joint sets in~$[\Xi\setminus A_0]^{<\omega}$ and a sequence $(\gamma_n)_{n\in\N}$ of distinct elements in~$\Gamma$ such that, for each $n\in\N$, there is a unit vector $w_n\in F_{\gamma_n}$ for which the vector $y_n = R_{A_n}TJ_{\gamma_n}w_n\in Y$  satisfies 
\begin{equation}\label{L:Calkin_c0_ell1_FinDim:eq4}
\lVert y_n\rVert\ge 1-\eta. \end{equation}
Then $(J_{\gamma_n}w_n)$ is a sequence of disjointly supported unit vectors in~$X$, so we can define a linear isometry $V\in\mathscr{B}(\ell_1,X)$ by $Ve_n = J_{\gamma_n}w_n$ for each $n\in\N$. 

Take $n\in\N$, and choose $f_n\in Y^*$ of norm~$1$ such that $\langle y_n,f_n\rangle = \lVert y_n\rVert$. Since $y_n = R_{A_n}y_n$ and $\lVert R_{A_n}\rVert =1$, we may suppose that $R_{A_n}^*f_n=f_n$. This implies that $\langle y_m,f_n\rangle = 0$ for \mbox{$m\ne n$}, and we can define an operator $U_1\in\mathscr{B}(Y,\ell_1)$ by $U_1y = \bigl(\langle y,f_n\rangle/\lVert y_n\rVert\bigr)_{n\in\N}$; it satisfies $U_1y_n = e_n$ for $n\in\N$, and $\lVert U_1\rVert\le \frac{1}{1-\eta}$ by~\eqref{L:Calkin_c0_ell1_FinDim:eq4}.

The definition of~$y_n$ together with the fact that $A_n\subseteq\Xi\setminus A_0$ means that~$y_n$ is the component of the vector $(I_Y-R_{A_0})TJ_{\gamma_n}w_n$ supported on the set~$A_n$, so by the definition of the $\ell_1$-norm and \eqref{L:Calkin_c0_ell1_FinDim:eq3}--\eqref{L:Calkin_c0_ell1_FinDim:eq4}, we have
\[ \lVert (I_Y-R_{A_0})TJ_{\gamma_n}w_n - y_n\rVert = \lVert (I_Y-R_{A_0})TJ_{\gamma_n}w_n\rVert - \lVert y_n\rVert\le 1+\eta - (1-\eta) = 2\eta.  \]
Consequently the operator $U_2 = U_1(I_Y-R_{A_0})TV\in\mathscr{B}(\ell_1)$ satisfies
\[ \lVert (U_2 -I_{\ell_1})e_n\rVert = \lVert U_1((I_Y-R_{A_0})TJ_{\gamma_n}w_n - y_n)\rVert\le \frac{2\eta}{1-\eta}\qquad (n\in\N). \]
This implies that  $\lVert U_2 - I_{\ell_1}\rVert\le 2\eta/(1-\eta)<1$ by the well-known counterpart of~\eqref{L:Calkin_c0_ell1_FinDim:eq2} for operators defined on the space~$\ell_1$ itself. Therefore $U_2$ is invertible, and 
\[ \lVert U_2^{-1}\rVert\le \Bigl(1 - \frac{2\eta}{1-\eta}\Bigr)^{-1} = \frac{1-\eta}{1-3\eta}. \] 
Set $U = U_2^{-1}U_1(I_Y-R_{A_0})\in\mathscr{B}(Y,\ell_1)$. Then $UTV=I_{\ell_1}$ by the definition of~$U_2$,   
\[ \lVert U\rVert\le \frac{1-\eta}{1 - 3\eta}\frac{1}{1-\eta} = C\qquad\text{and}\qquad \lVert V\rVert =1. \qedhere \]
\end{proof}

\begin{corollary}\label{calkindirectsumincompressible} 
 Let  $X = \bigl(\bigoplus_{n \in \mathbb{N}}\ell_2^n \bigr)_E\oplus E_\Gamma$, where $(E,E_\Gamma) = (c_0,c_0(\Gamma))$ or $(E,E_\Gamma)  = (\ell_1,\ell_1(\Gamma))$ for some index set~$\Gamma$. 
For every non-compact operator $T\in\mathscr{B}(X)$ with $\lVert T\rVert_e=1$ and every constant $C>1$, there are operators $U\in\mathscr{B}(X,E)$ and $V\in\mathscr{B}(E,X)$ such that 
\begin{equation*}
  UTV = I_{E}\qquad\text{and}\qquad \lVert U\rVert\,\lVert V\rVert\le C. 
\end{equation*} 
Consequently the Calkin algebra of~$X$ has the $C$-IFP and is uniformly incompressible.
\end{corollary}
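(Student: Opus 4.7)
The plan is to combine the preceding lemma with the abstract idempotent-factorization machinery of \Cref{S:prelim}. The key observation is that~$X$ admits a natural description as an $E$-direct sum of finite-dimensional Banach spaces indexed by an infinite set, placing it in the scope of \Cref{L:Calkin_c0_ell1_FinDim} with both domain and codomain taken to be~$X$.

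First, I would rewrite $X\cong\bigl(\bigoplus_{i\in I}F_i\bigr)_E$ for some infinite index set~$I$ and finite-dimensional summands~$F_i$. When~$\Gamma$ is infinite, take $I=\N\sqcup\Gamma$, $F_n=\ell_2^n$ for $n\in\N$, and $F_\gamma=\K$ for $\gamma\in\Gamma$; when~$\Gamma$ is finite (possibly empty), the summand~$E_\Gamma\cong\K^{|\Gamma|}$ can be absorbed as a single additional finite-dimensional block without disturbing the infinite index set inherited from~$\N$. With this description in hand, \Cref{L:Calkin_c0_ell1_FinDim}, applied with the choices $\Gamma=\Xi=I$ and $F_\gamma=G_\xi=F_i$, delivers at once the operators $U\in\mcb(X,E)$ and $V\in\mcb(E,X)$ satisfying $UTV=I_E$ and $\lVert U\rVert\,\lVert V\rVert\le C$, establishing the first conclusion.

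For the consequence about the Calkin algebra, I would fix $C>1$ and let $t=T+\mck(X)\in\mcb(X)/\mck(X)$ have norm~$1$. This is equivalent to~$T$ being non-compact with $\lVert T\rVert_e=1$, so the first part supplies $U\in\mcb(X,E)$ and $V\in\mcb(E,X)$ with $UTV=I_E$ and $\lVert U\rVert\,\lVert V\rVert\le C$. The element $UTV+\mck(E)=I_E+\mck(E)$ is a non-zero idempotent in the Calkin algebra of~$E$ because $E\in\{c_0,\ell_1\}$ is infinite-dimensional. Hence the hypothesis of \Cref{Factorisation} is satisfied with $Y=E$, $\mathscr{J}=\mck$, $R=U$ and $S=V$; moreover, since the idempotent produced equals $I_Y+\mathscr{J}(Y)$ on the nose, the strengthening described in \Cref{R:factoringID} applies and yields that $\mcb(X)/\mck(X)$ has the $C$-IFP, not merely the $C^2$-IFP. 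Uniform incompressibility then follows immediately from \Cref{P:JPSlemma0.2}.

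Main obstacle: there is no substantial difficulty, since the corollary is essentially a packaging of \Cref{L:Calkin_c0_ell1_FinDim} through the IFP framework developed in \Cref{S:prelim}. The only points requiring care are the reindexing that brings~$X$ into the form $\bigl(\bigoplus_{i\in I}F_i\bigr)_E$ required by \Cref{L:Calkin_c0_ell1_FinDim}, and the observation that the middle Banach space~$Y$ in \Cref{Factorisation} can be taken to be~$E$ itself so that \Cref{R:factoringID} gives the optimal constant.
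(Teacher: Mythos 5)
Your proposal is correct and follows essentially the same route as the paper: rewrite $X$ as an $E$-direct sum of non-zero finite-dimensional blocks over an infinite index set (the paper uses $\Xi=(\N\times\{1\})\cup(\Gamma\times\{2\})$ with $F_{(n,1)}=\ell_2^n$ and $F_{(\gamma,2)}=\K$), apply \Cref{L:Calkin_c0_ell1_FinDim} with domain and codomain both equal to~$X$, and deduce the $C$-IFP via \Cref{Factorisation} together with \Cref{R:factoringID}. Your extra care over finite or empty~$\Gamma$ is sound but not needed, since the paper's indexing already handles those cases.
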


\begin{proof} This follows immediately from \Cref{{L:Calkin_c0_ell1_FinDim}} because we can write $X = \bigl(\bigoplus_{\xi\in\Xi} F_\xi\bigr)_E$, where $\Xi = (\N\times\{1\})\cup(\Gamma\times\{2\})$ and $F_\xi = \ell_2^n$ for $\xi =(n,1)$ and $F_\xi = \K$ for $\xi = (\gamma,2)$.
\end{proof}

\section{The proof of \Cref{mainthm}\ref{mainthm2}}\label{c0KA} 
\noindent Let $[\mathbb{N}]^\omega$ be the set of all infinite subsets of~$\mathbb{N}$, recall that  $[\Gamma]^{<\omega}$ denotes the set of all finite subsets of a general set~$\Gamma$, and consider a family $\mathcal{A}\subseteq[\mathbb{N}]^\omega$ which is \emph{almost disjoint} in the sense that $A\cap B\in[\mathbb{N}]^{<\omega}$ whenever $A,B\in\mathcal{A}$ are distinct. Then the closed subspace~$X_\A$  of~$\ell_\infty$ spanned by the indicator functions $\one_A$ for $A\in\A$ and $\one_{\{n\}}$ for $n\in\mathbb{N}$ is a self-adjoint subalgebra. Hence, assuming complex scalars, we see that~$X_\A$ is isometrically isomorphic to~$C_0(K_\A)$ for  a locally compact Hausdorff space~$K_\A$, which is the \emph{Mr\'{o}wka space} induced by the almost disjoint family~$\A$ (also known as the  Alexandroff--Urysohn space, the $\Psi$-space and the Isbell--Mr\'{o}wka space). Note that~$X_\A$ is non-unital whenever~$\A$ is infinite, so~$K_\A$ is not compact. Furthermore, $K_\A$ is metrizable if and only if $C_0(K_\A)$ is separable, if and only if~$\A$ is countable.

Alternatively, one can define~$K_\A$ explicitly as follows. As a set,
\begin{equation}\label{eq:K_A} K_\A = \{x_n : n \in \mathbb{N} \} \cup \{y_A : A\in \A\}, \end{equation} where $x_n$ is an isolated point for each $n \in \mathbb{N}$, while  a neighbourhood basis of~$y_A$, for each $A \in \A$, is given by the collection of sets of the form $U(A,F) = \{y_A\} \cup \{x \in A \setminus F\}$, where $F \in [\mathbb{N}]^{<\omega}$. It follows easily from this description that~$K_\A$ is \emph{scattered} in the sense that every non-empty subset of it contains a relatively isolated point.  

Our goal is to prove the following theorem.

\begin{theorem}\label{finalCKresult}
  Let $K_\A$ be the Mr\'{o}wka space induced by an uncountable, almost disjoint family $\A\subseteq[\mathbb{N}]^\omega$.
  For  every $C>4$ and every non-compact operator $T\in\mathscr{B}(C_0(K_\A))$ with $\lVert T\rVert_e=1$, there are operators $U\in\mathscr{B}(C_0(K_\A),c_0)$ and $V\in\mathscr{B}(c_0,C_0(K_\A))$ such that 
\begin{equation*}
  UTV = I_{c_0}\qquad\text{and}\qquad \lVert U\rVert\,\lVert V\rVert< C. 
\end{equation*} 
Consequently the Calkin algebra of~$C_0(K_\A)$ has the $C$-IFP and is uniformly incompressible.
\end{theorem}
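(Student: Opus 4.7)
The plan is to verify the hypothesis of Theorem~\ref{P:AequivB} with both domain and codomain equal to $Y = C_0(K_\A)$. The weak* sequential compactness of $B_{Y^*}$ holds by a standard argument: since $K_\A$ is scattered, $C_0(K_\A)^*$ is isometric to $\ell_1(K_\A)$, and any bounded sequence there has countable total support, reducing the problem to the separable case in which $B_{\ell_1}$ is weak* sequentially compact as the dual of~$c_0$. Hence, for the given $C > 4$ and any non-compact $T$ with $\lVert T\rVert_e = 1$, it suffices to produce a sequence $(f_n)_{n\in\N}$ in $C_0(K_\A)$ satisfying
\[ \sup_{k \in K_\A}\sum_{n=1}^{\infty} \lvert f_n(k)\rvert \le 1 \qquad\text{and}\qquad \inf_n \lVert Tf_n\rVert > 1/C. \]

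The natural building blocks are the norm-one continuous idempotents $\one_{\{y_A\}\cup(A\setminus F)} \in C_0(K_\A)$, one for every pair $(A,F)$ with $A \in \A$ and $F \in [A]^{<\omega}$: each set $\{y_A\}\cup(A\setminus F)$ is clopen and compact in~$K_\A$. Almost disjointness of~$\A$ ensures that, for any countable subfamily $(A_n) \subseteq \A$, the choice $F_n := A_n \cap \bigcup_{m<n} A_m$ is finite and makes the sets $U_n := \{y_{A_n}\}\cup(A_n\setminus F_n)$ pairwise disjoint. Fix now $\epsilon > 0$ with $(1-\epsilon)/4 > 1/C$. The argument proceeds in three steps. \emph{(i) Localisation:} using the ``few operators'' classification of $\mathscr{B}(C_0(K_\A))$ from~\cite{KL}, extract an uncountable $\A_0 \subseteq \A$ and, for each $A\in\A_0$, a norm-one vector $u_A \in C_0(K_\A)$ essentially supported on $\{y_A\}\cup A$ with $\lVert Tu_A\rVert > 1 - \epsilon$. \emph{(ii) Disjointification:} choose a countable sequence $(A_n) \subseteq \A_0$, replace each $u_{A_n}$ by its restriction to~$U_n$, and pass to a further subsequence in which the finite-rank trimming error is controlled by a pre-assigned summable sequence, producing norm-bounded $v_n$ with pairwise disjoint supports and $\lVert Tv_n\rVert > 1 - 2\epsilon$. \emph{(iii) Normalisation:} decompose each $v_n$ as a linear combination of up to four nonnegative real functions of modulus at most $\lvert v_n\rvert$; by the pigeonhole principle, one such part $f_n$ satisfies $\lVert Tf_n\rVert > (1-2\epsilon)/4 > 1/C$, while the pairwise disjointness of supports yields $\sup_k\sum_n\lvert f_n(k)\rvert \le \sup_n\lVert f_n\rVert_\infty \le 1$. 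The final assertion of the theorem follows via~\Cref{R:factoringID}.

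The main obstacle is Step~(i): quantitatively extracting from the ``few operators'' structure of~\cite{KL} the conclusion that the essential norm of a non-compact~$T$ must be witnessed on uncountably many individual branches $\{y_A\}\cup A$ of~$K_\A$, with a uniform lower bound $1 - \epsilon$ on $\lVert Tu_A\rVert$. The ``few operators'' hypothesis is indispensable here: without it, an arbitrary operator on a $C_0(K)$-space could realise its essential norm only via cancellations spread across infinitely many disjoint regions, obstructing the existence of the localised witnesses~$u_A$. Once Step~(i) is in place, Steps~(ii)--(iii) amount to standard combinatorial and scalar bookkeeping.
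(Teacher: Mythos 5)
Your reduction to \Cref{P:AequivB} is the right frame, and your normalisation step (losing a factor of $4$, hence $C>4$) matches the paper's bookkeeping, but Step~(i) is a genuine gap, and in fact the wrong turn. First, \Cref{finalCKresult} is stated for an \emph{arbitrary} uncountable almost disjoint family $\A$; the ``few operators'' property is not a hypothesis of the theorem and is not used in the paper's proof of it (it enters only afterwards, via the Kania--Kochanek ideal classification, to deduce \Cref{mainthm}\ref{mainthm2}). By declaring the few-operators structure ``indispensable'' for producing the witnesses $u_A$, you have at best set up a proof of a special case. Second, even granting that structure, the localisation claim you assert --- that the essential norm of every non-compact $T$ is witnessed, up to $\epsilon$, by unit vectors supported on uncountably many individual branches $\{y_A\}\cup A$ --- is not established and is false as a general principle: the non-compactness of $T$ may be carried entirely by the isolated points. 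The paper's proof confronts exactly this with a dichotomy on the subspace $X_{00}=\operatorname{span}\{\one_{\{x_n\}}\}$: either $\lVert T(I-\Proj(F,\emptyset))|_{X_{00}}\rVert>\delta$ for every finite $F$, in which case a disjoint witness sequence is built greedily inside $X_{00}$ with no branches involved at all; or this fails for some $F$, in which case one passes to $S=T(I-\Proj(F,\emptyset))$ and builds branch-supported witnesses for $S$ using the finite-rank projections $\Proj(G_n,\A_n)$ and the combinatorics of $D(\A')$. Your proposal has no analogue of the first horn and no mechanism for the second beyond an appeal to the unproved Step~(i), which you yourself flag as ``the main obstacle''.

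Two smaller points. Your disjointification in Step~(ii) silently assumes that restricting $u_{A_n}$ to the clopen set $U_n=\{y_{A_n}\}\cup(A_n\setminus F_n)$ changes $Tu_{A_n}$ only by a controllable ``finite-rank trimming error''; the discarded piece is supported on the finite set $F_n$, but $\lVert T\one_{\{x_m\}}\rVert$ need not be small, so controlling this error requires exactly the kind of recursive interplay between the sets $G_n$ and the operator (via $\lVert S|_{X_{00}}\rVert\le\delta$) that the paper's Case~2 sets up --- it is not free. On the positive side, your verification of weak* sequential compactness of $B_{C_0(K_\A)^*}$ by reduction to countable supports in $\ell_1(K_\A)$ is a legitimate alternative to the paper's route through Namioka--Phelps and Hagler--Johnson.
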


Before   embarking on the proof of \Cref{finalCKresult}, let us explain how \Cref{mainthm}\ref{mainthm2} follows from it. 

The setting is as follows: for an un\-count\-able, almost disjoint family $\A \subseteq[\mathbb{N}]^\omega$, we say that $C_0(K_\A)$ admits \emph{few operators} if every operator on~$C_0(K_\A)$ is the sum of a scalar multiple of the identity operator and an operator which factors through~$c_0$, so in other words the ideal~$\mathscr{G}_{c_0}(C_0(K_\A))$ of operators factoring through~$c_0$ has codimension~$1$ in~$\mathscr{B}(C_0(K_\A))$. We remark that the ideal~$\mathscr{G}_{c_0}(C_0(K_\A))$ is closed for every Mr\'{o}wka space~$K_\A$ because it coincides with the ideal of operators with separable range, which is a closed operator ideal.

Assuming the Continuum Hypothesis, Koszmider~\cite{K} 
 constructed an un\-count\-able, almost disjoint family $\A \subseteq[\mathbb{N}]^\omega$ for which $C_0(K_\A)$ admits few operators;    a construction within \textsf{ZFC} is given in~\cite{KL}. 
 Kania and Ko\-chanek \cite[Theorem~5.5]{KK} proved that when~$C_0(K_\A)$ admits few operators, $\mcb(C_0(K_\A))$ contains only four closed ideals, namely
\begin{equation}\label{eq:KK}
\{0\}\subsetneq \mck(C_0(K_\A))\subsetneq \mcg_{c_0}(C_0(K_\A)) \subsetneq \mcb(C_0(K_\A)). \end{equation}
This result was also obtained independently by Brooker (unpublished).

In view of~\eqref{Proof:todo} and~\eqref{eq:KK}, to prove \Cref{mainthm}\ref{mainthm2}, we need only establish incompressibility of the quotients $\mcb(C_0(K_\A))/\mcg_{c_0}(C_0(K_\A))\cong\mathbb{K}$ and $\mcb(C_0(K_\A))/\mck(C_0(K_\A))$. The former is finite-dimensional, so trivially has a unique algebra norm, while \Cref{finalCKresult} gives the conclusion for the latter.\smallskip 

It remains to prove \Cref{finalCKresult}. Our strategy  is to  verify that $C_0(K_\A)$ contains a sequence $(f_n)_{n\in\N}$ which satisfies~\eqref{P:AequivB:eq1} and then apply \Cref{P:AequivB}.  We begin by explaining why the latter result applies to the space $Y=C_0(K_\A)$. 

\begin{theorem}\label{weakstardualball:formerpart1} The unit ball of~$C_0(K)^*$ is weak* sequentially compact for every scattered, locally compact Hausdorff  space~$K$.
\end{theorem}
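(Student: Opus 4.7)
The plan is to exhibit $C_0(K)$ as an Asplund space and then invoke the classical fact that the dual unit ball of any Asplund space is weak* sequentially compact. First I would pass to the one-point compactification $K^+ = K \cup \{\infty\}$. Since $K$ is open in $K^+$, basic neighbourhoods of points $x \in K$ in $K^+$ coincide with basic neighbourhoods in $K$, so it is routine to verify that scatteredness transfers from $K$ to $K^+$: every non-empty subset of $K^+$ either lies inside $K$ and inherits an isolated point, or contains $\infty$ and otherwise contains a point of $K$ that remains isolated in the subspace topology. Thus $K^+$ is a scattered compact Hausdorff space.

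Next I would apply the classical theorem that $C(L)$ is Asplund for every compact Hausdorff scattered $L$. This can be seen via Rudin's theorem that every regular Borel measure on such an $L$ is purely atomic, identifying $C(L)^*$ isometrically with $\ell_1(L)$, which has the Radon--Nikod\'{y}m property; Asplundness of $C(L)$ then follows by duality (alternatively, one can cite the theorem of Namioka and Phelps directly). Since $C_0(K)$ embeds isometrically as the closed hyperplane $\{f \in C(K^+) : f(\infty) = 0\}$ inside $C(K^+)$, and closed subspaces of Asplund spaces are Asplund, $C_0(K)$ itself is Asplund.

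Finally I would invoke the classical result that the closed unit ball of the dual of any Asplund space is weak* sequentially compact, whence the conclusion follows. The standard route combines the characterisation of Asplundness in terms of norm-fragmentability of $(B_{X^*}, w^*)$ with the general topological fact that a compact space fragmented by a finer metric is sequentially compact. The main obstacle to a fully self-contained presentation would be this fragmentation-to-sequential-compactness step---one executes a Cantor-style iteration producing nested sections of arbitrarily small norm-diameter, extracts a norm-Cauchy subsequence from the original sequence, and identifies its weak* limit via Banach--Alaoglu---but for a paper of this style the cleanest route is to cite the Asplund theory as a black box. An alternative, more concrete attempt based solely on the atomic decomposition of measures (diagonalising the countable union of supports of a bounded sequence $(\mu_n)$) would fail to track mass that escapes to accumulation points of the support; the Asplund/fragmentation machinery is exactly what handles that escape uniformly.
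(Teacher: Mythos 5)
Your proposal is correct and follows essentially the same route as the paper: reduce to the compact case via the one-point compactification (which remains scattered), invoke Namioka--Phelps to see that $C(\alpha K)$ is Asplund, and conclude by the weak* sequential compactness of dual balls of Asplund spaces (the paper cites Hagler--Johnson for this last step rather than running the fragmentability argument). The only cosmetic difference is that you pass from $C(\alpha K)$ to $C_0(K)$ via heredity of Asplundness to closed subspaces, whereas the paper uses the isomorphism $C_0(K)\cong C(\alpha K)$ from \Cref{remark:CKhyperplanes:part1}; both are fine.
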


The following well-known observation will be useful in the proof.

\begin{remark}\label{remark:CKhyperplanes:part1}
Let~$K$ be  a scattered, compact Hausdorff space. Then~$C(K)$ contains a complemented subspace which is isomorphic to~$c_0$, so $C(K)$ is isomorphic to its hyperplanes. 

As an application of this fact, suppose that~$K$ is only \emph{locally} compact (as well as scattered and Hausdorff). Then its one-point compactification~$\alpha K$ is scattered. Since $C_0(K)$ is a hyperplane in~$C(\alpha K)$, these two Banach spaces are isomorphic.
\end{remark}

\begin{proof}[Proof of Theorem~{\normalfont{\ref{weakstardualball:formerpart1}}}] In view of \Cref{remark:CKhyperplanes:part1} and the isomorphic nature of this result, we may suppose that~$K$ is compact by replacing it with its one-point com\-pact\-i\-fi\-ca\-tion if necessary. Now the conclusion follows by combining two famous results: on the one hand, Hagler and John\-son \cite[Theorem~1(b)]{HJ} have shown that the unit ball of~$X^*$ is weak* sequentially compact for every Asp\-lund space~$X$, and on the other Namioka and Phelps  \cite[Theorem~18]{NP} have shown that~$C(K)$ is an Asplund space whenever~$K$ is a scattered, compact Hausdorff space. 
\end{proof}

To facilitate the presentation of our proof of \Cref{finalCKresult}, we introduce some more notation. Through\-out the following list, $\A\subseteq [\mathbb{N}]^\omega$ denotes an almost disjoint family.

\begin{itemize}
\item  A (finite or infinite) sequence $(f_n)_{n\in N}$ of scalar-valued functions, all defined on the same set~$K$, is \emph{disjoint} if, for every $k\in K$, there is at most one $n\in N$ such that $f_n(k)\ne 0$.  This is a slight weakening of the usual notion of disjointly supported functions in the case where $K$ is a topological space.
\item For a function $f \in C_0(K_\A)$, define \[    r(f) = \{n\in \mathbb{N} : f(x_n) \neq 0 \}\qquad\text{and}\qquad s(f) = \{A \in \A : f(y_\A) \neq 0\}.\]
  In view of~\eqref{eq:K_A}, these sets correspond to a partition of $\{ k\in K_\A : f(k)\ne 0\}$, and consequently a sequence $(f_n)$ in~$C_0(K_\A)$ is disjoint in the sense of the preceding bullet point if and only if \[r(f_m) \cap r(f_n) = \emptyset\quad\text{and}\quad s(f_m) \cap s(f_n) = \emptyset\quad\text{whenever}\quad m\ne n.\] 
 \item  For $\A'\in [\A]^{<\omega}$, define
\[D(\A') = \bigcup \{ A \cap B : A,B \in \A',\, A \neq B\}\in[\mathbb{N}]^{<\omega}. \] In words, $D(\A')$ is the set of those natural numbers which belong to more than one of the sets in $\A'$. The fact that $D(\A')$ is finite follows from the fact that the family~$\A$ is almost disjoint and the set~$\A'$ is finite.
  \item For $F\in [\mathbb{N}]^{<\omega}$ and $\A'\in[\A]^{<\omega}$, we define a finite-rank operator by
    \[ \Proj(F,\A') = \sum_{n\in F} \delta_{x_n} \otimes \mathbbm{1}_{\{x_n\}} + \sum_{A\in\A'} \delta_{y_A} \otimes \mathbbm{1}_{U(A,F)} \in \mcb(C_0(K_\A)), \]
    where $\delta_k\in C_0(K_\A)^*$ denotes the point evaluation at a point~$k\in K_\A$, that is, \[ \langle f, \delta_k\rangle = f(k)\qquad (f \in C_0(K_\A)); \]
    $\mathbbm{1}_L\colon K_\A\to \{0,1\}$ denotes the indicator function of a subset~$L$ of~$K_\A$, and we recall that the sets~$U(A,F)$ were defined in the text below~\eqref{eq:K_A}; and  as usual,  $\delta_k\otimes\mathbbm{1}_L$ denotes the rank-one operator given by $f\mapsto f(k)\mathbbm{1}_L$. 
\end{itemize} 

The following two lemmas explain the properties of the operators $\operatorname{Proj}(F,\A')$ that we require in our construction. The first is verified by direct computation, so we omit the details.

\begin{lemma}\label{L:projFA}
Let $F\in [\mathbb{N}]^{<\omega}$ and $\A'\in [\A]^{<\omega}$. Then: 
\begin{enumerate}[label={\normalfont{(\roman*)}}]
\item\label{4.4.19i} $\operatorname{Proj}(F,\A')$ is a projection.
\item\label{4.4.19ii} The adjoint $\operatorname{Proj}(F,\A')^*$ of $\operatorname{Proj}(F,\A')$ satisfies
  \begin{equation*}
    \Proj(F,\A')^*\delta_{y_A} = \begin{cases}  \delta_{y_A} &\quad \text{for}\ A\in \A',\\
    0 &\quad\text{for}\ A\in\A\setminus\A'. \end{cases} \end{equation*}
\item\label{4.4.19iii} Suppose that $D(\A')\subseteq F$. Then $\lVert\Proj(F,\A')\rVert = 1$ (except in the trivial case where~$\A'$ and~$F$ are both empty). 
\item\label{eq:projnorm1} $\lVert I - \Proj(F,\emptyset)\rVert =1$. 
    \end{enumerate}
\end{lemma}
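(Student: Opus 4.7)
The plan is to verify the four assertions by direct computation from the defining formula for $P = \Proj(F, \A')$, the main work being to extract and use two simple pointwise evaluation formulas. Unpacking the definition and recalling that $U(A,F) = \{y_A\} \cup \{x_n : n \in A \setminus F\}$, one obtains
\begin{align*}
(Pf)(x_n) &= \begin{cases} f(x_n) & \text{if } n \in F,\\ \sum_{A \in \A',\, n \in A} f(y_A) & \text{if } n \notin F,\end{cases}\\
(Pf)(y_A) &= \begin{cases} f(y_A) & \text{if } A \in \A',\\ 0 & \text{if } A \in \A \setminus \A'.\end{cases}
\end{align*}

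Assertion \ref{4.4.19ii} is immediate from the second formula, since $\langle f, P^*\delta_{y_A}\rangle = (Pf)(y_A)$. For assertion \ref{4.4.19i}, I feed $Pf$ back into the formulas: $(P(Pf))(x_n) = f(x_n)$ for $n \in F$ and $(P(Pf))(y_A) = f(y_A)$ for $A \in \A'$, so $P^2 f$ and $Pf$ have the same coefficients when written in the defining representation for $P$, hence coincide.

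The main technical step is assertion \ref{4.4.19iii}. The lower bound $\lVert P\rVert \geq 1$ is automatic because a non-zero idempotent always has norm at least~$1$; the excluded trivial case is precisely when $P = 0$. For the upper bound $\lVert P\rVert \leq 1$, I bound $\lvert(Pf)(k)\rvert$ at each $k \in K_\A$ using the formulas above. Three of the four cases yield $\leq \lVert f\rVert_\infty$ at once. The delicate case is $k = x_n$ with $n \notin F$, where $(Pf)(x_n) = \sum_{A \in \A',\, n \in A} f(y_A)$; here the hypothesis $D(\A') \subseteq F$ enters in contrapositive form: $n \notin F$ forces $n \notin D(\A')$, so $n$ lies in at most one $A \in \A'$, collapsing the sum to a single term bounded by $\lVert f\rVert_\infty$. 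This is the only place where the almost-disjointness hypothesis is used, and it is the one point that genuinely requires care.

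For assertion \ref{eq:projnorm1}, observe that $I - \Proj(F, \emptyset)$ is the complementary idempotent to the finite-rank projection $\Proj(F, \emptyset)$, so it is a non-zero projection (since $C_0(K_\A)$ is infinite-dimensional) and hence has norm at least~$1$. The upper bound $\leq 1$ is read directly off the evaluation formulas with $\A' = \emptyset$: the function $(I - \Proj(F, \emptyset))f$ agrees with $f$ away from the set $\{x_n : n \in F\}$ and vanishes on it, which does not increase the sup norm.
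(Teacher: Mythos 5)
Your proof is correct; the paper states only that this lemma ``is verified by direct computation'' and omits the details, and your direct computation via the two pointwise evaluation formulas is exactly the intended argument. In particular, you correctly isolate the one non-trivial point: for $n\notin F$ the hypothesis $D(\A')\subseteq F$ forces the sum $\sum_{A\in\A',\,n\in A} f(y_A)$ to have at most one term, which is what gives $\lVert\Proj(F,\A')\rVert\le 1$.
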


\begin{lemma}\label{CKlemma}
  Let $h\in \operatorname{span}\{\mathbbm{1}_{\{x_n\}},\, \mathbbm{1}_{U(A,\emptyset)} : n \in \mathbb{N},\, A \in \A\}$, where $\A\subseteq [\mathbb{N}]^\omega$ is an almost dis\-joint family. Then:
  \begin{enumerate}[label={\normalfont{(\roman*)}}]
    \item\label{CKlemmai} There exists a set $H\in [\mathbb{N}]^{<\omega}$ such that
\begin{equation*}
D(s(h)) \subseteq H\qquad\text{and}\qquad h \in \operatorname{span}\{\mathbbm{1}_{\{x_n\}},\, \mathbbm{1}_{U(A,H)} : n\in H,\, A \in s(h)\}.\end{equation*}
\item\label{CKlemmaii} Suppose that $H\in[\mathbb{N}]^{<\omega}$ is chosen as in~\ref{CKlemmai}. Then, for every $G\in[\mathbb{N}]^{<\omega}$ with $H\subseteq G$, the function $g = (I-\Proj(G,\emptyset))h\in C_0(K_\A)$ satisfies
  \begin{equation*} s(g)=s(h)\qquad\text{and}\qquad g=\sum_{A \in s(g)} g(y_A) \mathbbm{1}_{U(A,G)}. \end{equation*}
\end{enumerate}  
\end{lemma}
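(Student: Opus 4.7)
The whole statement is essentially a bookkeeping calculation, using almost-disjointness of~$\A$ to control ``overlaps''. Let me first write $h$ in its basic form:
\begin{equation*}
h = \sum_{n\in F_0} \alpha_n\mathbbm{1}_{\{x_n\}} + \sum_{A\in \A_0}\beta_A\mathbbm{1}_{U(A,\emptyset)},
\end{equation*}
for some $F_0\in[\N]^{<\omega}$ and $\A_0\in[\A]^{<\omega}$ with all $\beta_A$ non-zero. Evaluating at the points $y_B$ for $B\in\A$ shows that $s(h)=\A_0$, since distinct $y_B$'s are separated by the basic functions. With this in hand, \ref{CKlemmai} is proved by taking $H = F_0\cup D(\A_0)$ and observing the key identity $\mathbbm{1}_{U(A,\emptyset)} = \mathbbm{1}_{U(A,H)} + \sum_{n\in A\cap H}\mathbbm{1}_{\{x_n\}}$; substituting this into the expression above gives a representation of~$h$ in the requested span, and of course $D(s(h))=D(\A_0)\subseteq H$ by construction.

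For \ref{CKlemmaii}, fix $G\in[\N]^{<\omega}$ with $H\subseteq G$, and compute  pointwise. Since $\operatorname{Proj}(G,\emptyset)f = \sum_{n\in G}f(x_n)\mathbbm{1}_{\{x_n\}}$, the function $g=(I-\operatorname{Proj}(G,\emptyset))h$ satisfies $g(y_A)=h(y_A)$ for every $A\in\A$ (so $s(g)=s(h)$) and $g(x_n)=0$ for $n\in G$, while $g(x_n)=h(x_n)$ for $n\notin G$. Thus both sides of the claimed identity $g=\sum_{A\in s(g)}g(y_A)\mathbbm{1}_{U(A,G)}$ take the value $g(y_A)$ at each~$y_A$ and $0$ at each~$x_n$ with $n\in G$. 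The remaining case, $n\notin G$, is where almost-disjointness enters: using the expansion of~$h$ from~\ref{CKlemmai}, the value $h(x_n)$ is a sum $\sum_{A\in s(h),\,n\in A\setminus H}\beta_A$ (the terms $\gamma_n\mathbbm{1}_{\{x_n\}}$ with $n\in H$ contribute nothing here, as $n\notin G\supseteq H$), and because $D(s(h))\subseteq H\subseteq G$, any $n\notin G$ lies in at most one $A\in s(h)$. So the sum collapses either to $0$ or to $g(y_A)$ for that unique $A$, which is precisely the value of $\sum_{A\in s(g)}g(y_A)\mathbbm{1}_{U(A,G)}$ at $x_n$.

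The only delicate point in the plan is keeping the roles of $H$ and $G$ straight: $H$ must contain both $F_0$ and $D(s(h))$ so that the expansion in \ref{CKlemmai} makes sense, while for \ref{CKlemmaii} we need $G\supseteq H$ so that the inclusion $D(s(h))\subseteq G$ is still available when we invoke almost-disjointness. Beyond that, the argument is a routine verification at the points $x_n$ and $y_A$ of $K_\A$, and no further nontrivial input is required.
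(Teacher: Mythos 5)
Your proof is correct and follows essentially the same route as the paper: write $h$ in its basic form, set $H$ to be the union of the finite "isolated-point support" with $D(s(h))$, and verify the identity in (ii) pointwise at the $y_A$ and at the $x_n$ (with the case split on $n\in G$ versus $n\notin G$), using $D(s(h))\subseteq G$ to ensure each $n\notin G$ lies in at most one $A\in s(h)$. The only cosmetic difference is that you make explicit the identity $\mathbbm{1}_{U(A,\emptyset)} = \mathbbm{1}_{U(A,H)} + \sum_{n\in A\cap H}\mathbbm{1}_{\{x_n\}}$, which the paper leaves implicit.
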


\begin{proof}
  \ref{CKlemmai}. Since $s(h)$ is a finite subset of~$\A$, we may express~$h$ in the form 
  \begin{equation*}
  h = \sum_{n\in N} \lambda_n\mathbbm{1}_{\{x_n\}} + \sum_{A\in s(h)} h(y_A) \mathbbm{1}_{U(A,\emptyset)} \end{equation*} for some $N\in[\mathbb{N}]^{<\omega}$ and some scalars~$\lambda_n$ for  $n\in N$. Then the set $H = D(s(h))\cup N\in[\N]^{<\omega}$ satisfies \[h = \sum_{n\in H} h(x_n) \mathbbm{1}_{\{x_n\}} + \sum_{A \in s(h)}h(y_A)\mathbbm{1}_{U(A,H)}.  \]
  
\ref{CKlemmaii}. First, we note that  $g(y_A)=h(y_A)$ for every $A \in \A$ because $s(\Proj(G,\emptyset)h) = \emptyset$, so $s(g)=s(h)$. To help the readability of the proof of the second part, we define \[ f = \sum_{A \in s(g)} g(y_A) \mathbbm{1}_{U(A,G)}. \] Then, what we seek to verify is that $g=f$.  It is clear by inspection that $g(y_A)=f(y_A)$ for every $A \in \A$. So, it remains to check that $g(x_n)=f(x_n)$ for every $n\in \mathbb{N}$. There are three cases to consider: 
\begin{itemize}
\item If $n\in G$, then $ f(x_n)=0$ and $(\Proj(G,\emptyset)h)(x_n) = h(x_n)$, so $g(x_n)=0$ as well.
\item If $n\in A\setminus G$ for some $A\in s(g)$, then $n\notin B$ for every $B\in s(g)\setminus\{A\}$ because $A\cap B\subseteq D(s(g))=D(s(h))\subseteq H\subseteq G$.
Therefore we have \[ f(x_n)=g(y_A)=h(y_A)=h(x_n) = g(x_n). \]
\item The final case is that $n\notin G$ and $n\notin A$ for every $A\in s(g)$. This implies that $f(x_n)=0$, $h(x_n)=0$  and $(\Proj(G,\emptyset)h)(x_n) = 0$, so we also have $g(x_n)=0$. \qedhere
 \end{itemize}
\end{proof}

\begin{proof}[Proof of Theorem~{\normalfont{\ref{finalCKresult}}}] As previously mentioned, our strategy is to show that, for every \mbox{$C>4$} and every non-compact operator $T \in \mcb(C_0(K_\A))$ with $\lVert T\rVert_e = 1$, $C_0(K_\A)$ contains a  sequence $(f_n)_{n\in\N}$ which satisfies condition~\eqref{P:AequivB:eq1} in \Cref{P:AequivB}. This will prove the result because \Cref{weakstardualball:formerpart1} ensures that \Cref{P:AequivB} applies.

Take $\delta\in (\frac1C, \frac{1}{4})$.  By recursion, we shall   construct a disjoint sequence $(f_n)_{n\in\N}$ in the unit ball of~${C_0(K_\A)}$ such that \begin{equation}\label{eq:lowerboundTf} \lVert Tf_n\rVert\geq \delta\qquad (n\in\mathbb{N}).
\end{equation}
We observe that a sequence $(f_n)$ with these properties will satisfy~\eqref{P:AequivB:eq1} because, for each $k\in K_\A$, there is at most one $n\in\N$ such that $f_n(k)\ne 0$, and therefore $\sum_{n=1}^\infty \lvert f_n(k)\rvert\le1$. 
We carry out the recursive construction in  two distinct cases, depending on the norms of the restrictions of a certain family of operators to the subspace $X_{00} = \operatorname{span}\{\mathbbm{1}_{\{x_n\}}: n \in \mathbb{N}\}$ of~$C_0(K_\A)$.\medskip

\textbf{Case 1.} Suppose that
  \begin{equation}\label{eq:1star} \lVert T(I-\Proj(F,\emptyset))|_{X_{00}}\rVert > \delta\qquad  (F\in [\mathbb{N}]^{<\omega}). \end{equation}
  In this case, it is quite easy to construct a disjoint sequence $(f_n)_{n\in\N}$ satisfying~\eqref{eq:lowerboundTf} inside the unit ball of~$X_{00}$.

 Indeed, to start the recursion, we apply~\eqref{eq:1star} with $F_0=\emptyset$ to see that $\|T|_{X_{00}}\| >\delta$, so we can select $f_1\in B_{X_{00}}$ such that $\|Tf_1\|> \delta$.

  Now assume recursively that, for some $n\in\mathbb{N}$, we have chosen a disjoint sequence of functions $(f_j)_{j=1}^n$ in~$B_{X_{00}}$ for which $\|Tf_j\| >\delta $ for each $j \in \{1,\dots,n\}$. Set $F_n = \bigcup^n_{j=1} r(f_j)$, and  observe that $F_n\in[\mathbb{N}]^{<\omega}$ because $f_1,\ldots,f_n\in X_{00}$. Hence we may apply~\eqref{eq:1star} to find a func\-tion $g_{n+1}\in B_{X_{00}}$ for which \[ \lVert T(I-\Proj (F_n,\emptyset))g_{n+1}\rVert > \delta.\] 
  Define $f_{n+1} =(I-\Proj(F_n, \emptyset))g_{n+1}\in X_{00}$. Then we have $\lVert Tf_{n+1}\rVert > \delta$, $r(f_{n+1})\subseteq\mathbb{N}\setminus F_n$, and $\|f_{n+1}\| \leq \|g_{n+1}\| \leq 1$ by \Cref{L:projFA}\ref{eq:projnorm1}, so the recursion continues.\medskip 

\textbf{Case 2.} Suppose that~\eqref{eq:1star}  fails, so that $\lVert T(I-\Proj(F,\emptyset))|_{X_{00}}\rVert\leq \delta$ for some $F\in [\mathbb{N}]^{<\omega}$, and set $S= T(I-\Proj(F,\emptyset))\in\mcb(C_0(K_\A))$. Since $\Proj(F,\emptyset)$ is a finite-rank operator, we have $\|S\|_e = \|T\|_e = 1$.
In this case, we shall construct the disjoint sequence $(f_n)_{n\in\N}$ satisfying~\eqref{eq:lowerboundTf} within  the unit ball of the subspace
\[ Y_{00} = \operatorname{span}\{\mathbbm{1}_{\{x_n\}},\, \mathbbm{1}_{U(A,\emptyset)} : n \in \mathbb{N},\, A \in \A\}, \]
which is dense in~$C_0(K_\A)$.  This  argument is somewhat more involved, with the main part consisting of a recursive construction of a disjoint sequence of functions $(g_n)_{n\in \mathbb{N}}$ in~$B_{Y_{00}}$ and a sequence $(G_n)_{n \in \mathbb{N}}$ in~$[\mathbb{N}]^{<\omega}$ for which 
\begin{equation}\label{star} \|Sg_n\| > \delta,\quad\
    D\Bigl(\bigcup_{j=1}^n s(g_j)\Bigr)\subseteq G_n\quad\ \text{and}\quad\
    g_n = \sum_{A \in s(g_n)} g_n(y_A)\mathbbm{1}_{U(A,G_n)}\qquad (n \in \mathbb{N}).
\end{equation}

To begin this recursion, observe that there is a function $h_1 \in B_{Y_{00}}$ with $\|Sh_1\| > 2\delta$ because $\|S\|\geq \|S\|_e = 1 > 2\delta$ and~$Y_{00}$ is dense in $C_0(K_\A)$. Applying \Cref{CKlemma}\ref{CKlemmai}, we can find  a set  $H_1 \in [\mathbb{N}]^{<\omega}$ such that
\[ D(s(h_1))\subseteq H_1\qquad\text{and}\qquad h_1 \in \operatorname{span}\{\mathbbm{1}_{\{x_m\}},\, \mathbbm{1}_{U(A,H_1)} : m\in H_1,\, A \in s(h_1)\}. \] Define $G_1 = H_1$, $k_1 = \Proj(H_1,\emptyset)h_1\in B_{X_{00}}$ and $g_1 = h_1-k_1 = (I-\Proj(H_1,\emptyset))h_1 \in B_{Y_{00}}$. Then  $\|Sk_1\|\leq \delta$ by the choice of~$S$, so that \[ \|Sg_1\| \geq \|Sh_1\| - \|Sk_1\| > 2\delta - \delta = \delta,  \]
while the second and third identities in~\eqref{star} both follow from \Cref{CKlemma}\ref{CKlemmaii}. This completes the base step.

Now assume that, for some $n\in\N$, we have selected disjointly supported functions $g_1,\ldots,g_n \in B_{Y_{00}}$ and sets $G_1,\ldots,G_n\in[\mathbb{N}]^{<\omega}$ satisfying~\eqref{star}. The definition of~$Y_{00}$ implies that the set $\A_n = \bigcup_{j=1}^n s(g_j)$ is finite, so $Q_n = \Proj(G_n,\A_n)$ is a finite-rank projection, and $\lVert S(I-Q_n)\rVert\geq \lVert S\rVert_e = 1 > 4\delta$.  Hence  we can find a func\-tion $\zeta_{n+1} \in B_{Y_{00}}$ for which $\|S(I-Q_n)\zeta_{n+1}\| > 4\delta$.

By hypothesis, $D(\A_n)\subseteq G_n$, so \Cref{L:projFA}\ref{4.4.19iii} implies that $\lVert Q_n\rVert = 1$. Consequently the function $h_{n+1} = \frac{1}{2}(I-Q_n)\zeta_{n+1}$ belongs to~$B_{Y_{00}}$. With another application of \Cref{CKlemma}\ref{CKlemmai}, we produce a set $H_{n+1}\in [\mathbb{N}]^{<\omega}$ for which $D(s(h_{n+1}))\subseteq H_{n+1}$ and
\[ h_{n+1} \in \operatorname{span}\{\mathbbm{1}_{\{x_m\}},\,\mathbbm{1}_{U(A,H_{n+1})} : m\in H_{n+1},\, A\in s(h_{n+1})\}. \]
For every $A\in\A_n$, we have 
\[ h_{n+1}(y_A) = \frac{1}{2}\langle \zeta_{n+1},(I-Q_n)^*\delta_{y_A}\rangle = 0 \]
because  $Q_n^*\delta_{y_A} = \delta_{y_A}$ by \Cref{L:projFA}\ref{4.4.19ii}, and therefore
 $s(h_{n+1})\cap\A_n=\emptyset$.
Define
\begin{gather*}
G_{n+1} = H_{n+1} \cup D(\A_n\cup s(h_{n+1}))\in [\mathbb{N}]^{<\omega},\qquad
k_{n+1} = \Proj(G_{n+1},\emptyset)h_{n+1}\in B_{X_{00}},\\ g_{n+1} = h_{n+1}-k_{n+1} = (I-\operatorname{Proj}(G_{n+1},\emptyset))h_{n+1} \in B_{Y_{00}}. \end{gather*}
Then  we have $\lVert Sk_{n+1}\rVert\leq \delta$, which implies that 
\[ \lVert Sg_{n+1}\rVert\geq\lVert Sh_{n+1}\rVert  - \delta = \frac{1}{2}\lVert S(I-Q_n)\zeta_{n+1}\rVert -\delta > \frac{4\delta}{2}-\delta = \delta. \]
Furthermore, \Cref{CKlemma}\ref{CKlemmaii} shows that
\begin{equation}\label{Eq:18july}
  s(g_{n+1}) = s(h_{n+1})\qquad\text{and}\qquad g_{n+1} = \sum_{A \in s(g_{n+1})} g_{n+1}(y_A)\mathbbm{1}_{U(A,G_{n+1})}. \end{equation}
In particular,  we see that $D\bigl(\bigcup^{n+1}_{j=1}s(g_j)\bigr)\subseteq G_{n+1}$, bearing the definition of~$\A_n$ in mind. Thus~\eqref{star} is satisfied for $n+1$.

It remains to verify that $g_{n+1}$ is disjoint from $g_j$ for each $j\in\{1,\ldots,n\}$. Since \[ s(g_{n+1})\cap s(g_j)\subseteq s(h_{n+1})\cap\A_n=\emptyset, \] we need only check that $r(g_{n+1})\cap r(g_j) = \emptyset$. Take $m\in r(g_{n+1})$, so that $m\in A\setminus G_{n+1}$ for some $A\in s(g_{n+1})$ by~\eqref{Eq:18july}. Then, for each $B \in s(g_j)$, we see that $m \notin B$ because \[A\cap B \subseteq D\biggl(\bigcup^{n+1}_{i=1}s(g_i)\biggr)\subseteq G_{n+1}. \] Hence the final identity in~\eqref{star} for the function $g_j$ implies that
\[ g_j(x_m) = \sum_{B\in s(g_j)} g_j(y_B)\mathbbm{1}_{U(B,G_j)}(x_m) = 0, \] and therefore $m\notin r(g_j)$. This completes the recursive construction of the sequences $(g_n)_{n\in \mathbb{N}}$ and $(G_n)_{n\in \mathbb{N}}$.

We conclude the proof by defining $f_n = (I-\Proj(F,\emptyset))g_n$ for every $n\in \mathbb{N}$. Then $\lVert f_n\rVert\leq \lVert g_n\rVert\leq 1$ by \Cref{L:projFA}\ref{eq:projnorm1}, $\lVert Tf_n\rVert = \lVert Sg_n\rVert > \delta$ by the left-hand part of~\eqref{star}, and the sequence $(f_n)_{n \in \mathbb{N}}$ is disjoint because $s(f_n)= s(g_n)$ and $r(f_n)\subseteq r(g_n)$ for every $n \in \mathbb{N}$, where the sequence $(g_n)_{n \in \mathbb{N}}$ is disjoint. \end{proof}

\section{Incompressibility of the Calkin algebra of $C_0(K)$ for scattered $K$}\label{section5}
\noindent 
In each of the cases we encountered in the proof of \Cref{mainthm}, we deduced incompressibility from quantitative factorizations of idempotent operators, which in view of \Cref{ex:nonuniformincompress} would appear to be a much stronger property than necessary. Notably, in the previous section we used \Cref{P:AequivB} to show that  the Calkin algebra of~$C_0(K_\A)$ is (uniformly) incompressible for every uncountable, almost disjoint family $\A\subseteq[\N]^\omega$.

The aim of this section is to prove that for a scattered, locally  compact Hausdorff space~$K$, the idempotent factorization property is not only a sufficient, but also a necessary condition for incompressibility of the Calkin algebra of~$C_0(K)$. 
The significance of this result is that when somebody seeks to determine whether the Calkin algebra of~$C_0(K)$ is incompressible, they will know that verifying condition~\ref{charCKincompress1} below (as we did in the previous section for $K=K_\A$) is most likely the optimal route. We consider the case where~$K$ equals the ordinal interval~$[0,\omega^\omega)$ particularly interesting. 

Recall from~\eqref{Eq:essentialnorm} that~\mbox{$\lVert\,\cdot\,\rVert_e$} denotes the essential norm.

  \begin{theorem}\label{charCKincompress}     The following conditions are equivalent for  a scattered, locally compact Haus\-dorff space~$K\colon$
\begin{enumerate}[label={\normalfont{(\alph*)}}]
\item\label{charCKincompress1} There is a constant $\delta\in(0,1)$ such that, for every non-compact operator $T\in\mathscr{B}(C_0(K))$ with $\lVert T\rVert_e =1$, $C_0(K)$~con\-tains a sequence $(f_n)_{n\in\N}$ with  
\begin{equation}\label{charCKincompress:eq1} 
    \sup_{k\in K} \sum_{n=1}^\infty\lvert f_n(k)\rvert\le 1\qquad\text{and}\qquad \inf_{n\in\N}\lVert Tf_n\rVert >\delta.
\end{equation} 
\item\label{charCKincompress2} There is a constant $C>1$ such that, for every non-compact operator $T\in\mathscr{B}(C_0(K))$ with $\lVert T\rVert_e =1$, there are operators $U\in \mcb(C_0(K),c_0)$ and $V\in\mathscr{B}(c_0,C_0(K))$ with $\lVert U\rVert\,\lVert V\rVert<C$ such that  $UTV=I_{c_0}$. 
\item\label{charCKincompress3} The Calkin algebra $\mathscr{B}(C_0(K))/\mathscr{K}(C_0(K))$ is uniformly incompressible. 
\item\label{charCKincompress4} The Calkin algebra $\mathscr{B}(C_0(K))/\mathscr{K}(C_0(K))$ is incompressible. 
\item\label{charCKincompress5} The  algebra norm~$\nu$ given by~\eqref{eq:def_nu} below dominates the essential norm in the sense that there is a constant $\eta>0$ such that
  \[ \nu(T+\mathscr{K}(C_0(K)))\ge\eta\lVert T\rVert_e\qquad (T\in\mathscr{B}(C_0(K))). \] 
\end{enumerate}
\end{theorem}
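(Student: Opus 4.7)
The plan is to establish the cycle of implications (a)$\,\Rightarrow\,$(b)$\,\Rightarrow\,$(c)$\,\Rightarrow\,$(d)$\,\Rightarrow\,$(e)$\,\Rightarrow\,$(a), using the tools already developed earlier in the paper. First, the equivalence (a)$\,\Leftrightarrow\,$(b) will follow directly from \Cref{P:AequivB} applied with $Y=C_0(K)$; this is legitimate because the unit ball of $C_0(K)^*$ is weak* sequentially compact by \Cref{weakstardualball:formerpart1}, the hypothesis that $K$ is scattered being exactly what is needed. The constant relationship is $\delta \in (1/C, 1)$.

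Next, the implication (b)$\,\Rightarrow\,$(c) is an immediate consequence of \Cref{Factorisation} combined with \Cref{R:factoringID}, taking the auxiliary Banach space $Y = c_0$ and the closed operator ideal of compact operators as~$\mathscr{J}$. Because (b) provides the exact equation $UTV = I_{c_0}$ (rather than merely the statement that $UTV + \mathscr{K}(c_0)$ is a non-zero idempotent), \Cref{R:factoringID} yields that the Calkin algebra has the $C$-IFP, whence \Cref{P:JPSlemma0.2} delivers uniform incompressibility. The implication (c)$\,\Rightarrow\,$(d) is immediate from the definitions.

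For (d)$\,\Rightarrow\,$(e), the norm~$\nu$ of~\eqref{eq:def_nu} will, by construction, be a continuous algebra norm on the Calkin algebra; in particular there is some constant $K>0$ such that $\nu(a) \le K\lVert a\rVert$ for every~$a$ in the Calkin algebra. Hence the identity map from the Calkin algebra equipped with its quotient norm into the Calkin algebra equipped with~$\nu$ is a continuous, injective homomorphism, which (d) forces to be bounded below, yielding precisely the inequality asserted in~(e).

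The substantive implication is (e)$\,\Rightarrow\,$(a), which closes the cycle. Given a non-compact operator~$T$ with $\lVert T\rVert_e = 1$, hypothesis~(e) provides the lower bound $\nu(T+\mathscr{K}(C_0(K))) \ge \eta$, and the definition~\eqref{eq:def_nu} of~$\nu$ will need to be tailored precisely so that such a lower bound can be converted into the construction of an admissible sequence $(f_n)$ satisfying~\eqref{charCKincompress:eq1}, with~$\delta$ depending only on~$\eta$. The main obstacle will be this unpacking step: I anticipate a recursive disjointification argument in the spirit of the two-case construction in the proof of \Cref{finalCKresult}, using the scattered structure of~$K$ (for example via density of spans of indicator functions of clopen sets, or via the Cantor--Bendixson derivatives of~$K$) to produce functions with suitably controlled overlap of supports at each stage, while the lower bound~$\eta$ on~$\nu$ furnishes the norm estimate $\lVert Tf_n\rVert > \delta$ needed at each step of the recursion.
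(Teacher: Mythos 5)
Your cycle agrees with the paper's for the steps (a)$\Leftrightarrow$(b) (via \Cref{P:AequivB} and \Cref{weakstardualball:formerpart1}), (b)$\Rightarrow$(c) (via \Cref{Factorisation} and \Cref{R:factoringID}), (c)$\Rightarrow$(d), and (d)$\Rightarrow$(e) (the continuity you need is the inequality $\nu\le\lVert\,\cdot\,\rVert_e$, which is~\eqref{P:JPSalgnorm:eq1}; do note that the assertion that $\nu$ is an algebra norm at all is not ``by construction'' but is the content of \Cref{P:JPSalgnorm}, whose proof of subadditivity and submultiplicativity is the longest argument in the section). The genuine gap is your closing implication. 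You propose to prove (e)$\Rightarrow$(a) directly and offer no argument for it, only the anticipation of ``a recursive disjointification argument in the spirit of the proof of \Cref{finalCKresult}''; you also say the definition of~$\nu$ ``will need to be tailored'' to make this work, which is not available to you, since $\nu$ is fixed by~\eqref{eq:def_nu} in the statement. An abstract lower bound $\nu(T+\mathscr{K}(C_0(K)))\ge\eta$ hands you a factorization of $I_{c_0}$ through~$T$ \emph{modulo compacts} with controlled norms; it does not hand you disjointly supported functions, and there is no visible route from it to the sequence $(f_n)$ of~\eqref{charCKincompress:eq1} that bypasses the factorization machinery.

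The repair is short and is what the paper does: close the cycle with (e)$\Rightarrow$(b) instead. Given $\eta$ as in~(e) (note $\eta\le 1$ by~\eqref{P:JPSalgnorm:eq1}), fix $C>1/\eta$ and $\xi\in(1/C,\eta)$. For a non-compact $T$ with $\lVert T\rVert_e=1$, the bound $\nu(T+\mathscr{K}(C_0(K)))\ge\eta>\xi$ and the definition~\eqref{eq:def_nu} produce $U_1\in\mathscr{B}(C_0(K),c_0)$ and $V_1\in\mathscr{B}(c_0,C_0(K))$ with $I_{c_0}-U_1TV_1\in\mathscr{K}(c_0)$ and $\lVert U_1\rVert\,\lVert V_1\rVert<1/\xi$. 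Since $C\xi>1$, \Cref{L:perturb} upgrades this to an exact factorization $U_2(U_1TV_1)V_2=I_{c_0}$ with $\lVert U_2\rVert\,\lVert V_2\rVert<C\xi$, so $U=U_2U_1$ and $V=V_1V_2$ witness~(b) with $\lVert U\rVert\,\lVert V\rVert<C$. Condition~(a) then follows from the direction of \Cref{P:AequivB} you have already invoked, and no new recursive construction is needed.
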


The proof of \Cref{charCKincompress} has two main ingredients. The first is \Cref{P:AequivB}, which was  already established in \Cref{Section3} and has  the equivalence of conditions~\ref{charCKincompress1} and~\ref{charCKincompress2} as an immediate consequence. The other is the algebra norm~$\nu$ on the Calkin algebra of~$C_0(K)$ that appears in condition~\ref{charCKincompress5}.

To define it, we note that according to the proof of \cite[Prop\-o\-si\-tion~5.4(ii)]{KK}, the identity operator on~$c_0$ factors through every non-compact ope\-ra\-tor on~$C_0(K)$ whenever~$K$ is a scattered, locally compact Hausdorff space. (This will also follow from our results, as we shall explain in detail at the beginning of the proof of \Cref{P:JPSalgnorm} below.) Therefore we can define  a map $\nu\colon \mathscr{B}(C_0(K))/\mathscr{K}(C_0(K))\to[0,\infty)$ by 
\begin{equation}\label{eq:def_nu} \nu(T+\mathscr{K}(C_0(K))) = \begin{cases} 0\quad &\text{if}\ T\in\mathscr{K}(C_0(K)),\\ \sup\biggl\{ \displaystyle{\frac{1}{\lVert U\rVert\, \lVert V\rVert}} : I_{c_0} - UTV\in\mathscr{K}(c_0)\biggr\}\  &\text{otherwise.}
\end{cases} \end{equation}

As already indicated, our aim is to establish the following result. 

\begin{proposition}\label{P:JPSalgnorm}
Let~$K$ be a scattered, locally compact Hausdorff space. Then~$\nu$ defined by~\eqref{eq:def_nu} is an algebra norm on~$\mathscr{B}(C_0(K))/\mathscr{K}(C_0(K))$, and 
\begin{equation}\label{P:JPSalgnorm:eq1}
    \nu(T+\mathscr{K}(C_0(K))\le \lVert T\rVert_e\qquad (T\in\mathscr{B}(C_0(K))).
\end{equation} 
\end{proposition}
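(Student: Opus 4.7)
The plan is to verify four things about $\nu$: (i)~the supremum in its definition is over a non-empty set for every non-compact $T$, so $\nu$ is everywhere defined and strictly positive off the zero class; (ii)~the upper bound $\nu(T+\mathscr{K}(C_0(K))) \le \|T\|_e$; (iii)~the norm axioms (positivity, positive homogeneity, definiteness, triangle inequality); and (iv)~submultiplicativity $\nu(st) \le \nu(s)\nu(t)$.  Items (i), (ii), and most of (iii) are routine once the right observations are in place; the triangle inequality in (iii) and the submultiplicativity in (iv) are the substantive steps.

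For well-definedness, the Kania--Kochanek factorization result recalled just before the statement supplies, for every non-compact $T\in\mathscr{B}(C_0(K))$, at least one pair $(U,V)$ with $I_{c_0} - UTV \in \mathscr{K}(c_0)$, so the set over which the supremum is taken is non-empty. For the upper bound, passing to the Calkin algebra $\mathscr{B}(c_0)/\mathscr{K}(c_0)$ and using that $\lVert I_{c_0} + \mathscr{K}(c_0)\rVert = 1$ (as $c_0$ is infinite-dimensional) gives
\[ 1 \;=\; \lVert UTV + \mathscr{K}(c_0)\rVert \;\le\; \|U\|\,\|T\|_e\,\|V\|, \]
so $1/(\|U\|\,\|V\|) \le \|T\|_e$; taking the supremum yields both the stated bound and the finiteness of $\nu$. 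Combined with well-definedness, this gives the implication $\nu(t) = 0 \Rightarrow t = 0$. Positive homogeneity follows from the bijection $(U,V)\leftrightarrow(\alpha U,V)$ between admissible pairs for $\alpha T$ and for $T$, which rescales the product of norms by $|\alpha|$.

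The triangle inequality and submultiplicativity demand more care. For the former, given $(U,V)$ with $U(S+T)V - I_{c_0}\in\mathscr{K}(c_0)$, write $\bar A + \bar B = 1$ in the Calkin algebra $\mathscr{C} = \mathscr{B}(c_0)/\mathscr{K}(c_0)$, where $\bar A,\bar B$ are the classes of $A = USV$ and $B = UTV$. When $\bar A$ is invertible in $\mathscr{C}$, a Fredholm inverse $A'\in\mathscr{B}(c_0)$ of $A$ produces a factorization $(U,VA')$ of $S$, and analogously for~$T$; when $\bar A$ fails to be invertible, one instead extracts an infinite-dimensional ``$c_0$-corner'' of $c_0$ on which $\bar A$ is the Fredholm identity (via Dowling--Randrianantoanina--Turett-type reasoning, compare \Cref{DRT}) and restricts the factorization to that corner. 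Optimizing across the split $\bar A + \bar B = 1$ should yield $1/(\|U\|\,\|V\|) \le \nu(s) + \nu(t)$. For submultiplicativity, given $(U,V)$ factorizing $ST$, the substitutions $(US,V)$ and $(U,TV)$ factorize $T$ and $S$ respectively, but directly produce only the weaker bound $\nu(st)\le\min\{\nu(s)\|t\|_e,\,\nu(t)\|s\|_e\}$; sharpening this to $\nu(s)\nu(t)$ is the principal obstacle I foresee, and I expect it to require splicing near-optimal factorizations of $s$ and $t$ through an intermediate copy of $c_0$ linking their two sides --- at this stage we do not yet know that $\nu$ is equivalent to $\|\cdot\|_e$, so any factor of $\|\cdot\|_e$ surviving the argument would be fatal.
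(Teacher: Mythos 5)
Your treatment of well-definedness, the bound $\nu\le\lVert\,\cdot\,\rVert_e$, faithfulness and homogeneity matches the paper's. But the two substantive steps are not actually established, and the route you sketch for the triangle inequality would not work as stated. Splitting $\bar A+\bar B=1$ in $\mathscr{B}(c_0)/\mathscr{K}(c_0)$ according to whether $\bar A$ is invertible gives you nothing quantitative: a Fredholm inverse $A'$ of $A$ yields a factorization of $S$ of quality $1/(\lVert U\rVert\,\lVert V\rVert\,\lVert A'\rVert)$, and $\lVert A'\rVert$ is uncontrolled; and in the non-invertible case there is in general no corner on which $\bar A$ acts as the identity. The mechanism the paper uses instead is to apply \Cref{L:factorizationdiagram} (built from \Cref{L:c0iso}, \Cref{Rosenthal}, James' distortion theorem and \Cref{DRT}) to $R_1=UT_1V$ to produce a constant $\xi_1>0$ and a subspace $W_1\cong c_0$ on which $R_1$ is, up to $(1\pm\epsilon)$, $\xi_1$ times an into-isomorphism, \emph{together with} a factorization of $I_{c_0}$ through $T_1$ of quality roughly $\xi_1/(\lVert U\rVert\,\lVert V\rVert)$; then to nest a second such subspace $W_2\subseteq W_1$ for $R_2$ with constant $\xi_2$. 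The whole point is the inequality $(1+\epsilon)(\xi_1+\xi_2)\ge\lVert (R_1+R_2)|_{W_2}\rVert\ge\lVert I_{c_0}|_{W_2}\rVert_e=1$, which converts the two factorization qualities $\xi_i/(\lVert U\rVert\,\lVert V\rVert)\lesssim\nu(T_i+\mathscr{K})$ into the subadditivity estimate. Your sketch contains no substitute for this step, and ``optimizing across the split'' does not supply one.

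For submultiplicativity you have correctly diagnosed the obstacle --- the naive substitutions only give $\nu(st)\le\min\{\nu(s)\lVert t\rVert_e,\nu(t)\lVert s\rVert_e\}$ --- but you leave it unresolved, so the proposal is incomplete precisely where the work lies. The paper's resolution is the same nesting device run through the ranges rather than through a single copy of $c_0$: one applies \Cref{L:factorizationdiagram} to $T_1|_{V[c_0]}$ (using \Cref{L:HNA} to see that $V[c_0]\cong c_0$ and that this restriction is non-compact, since $UT_2T_1V$ is the identity modulo compacts), obtaining $\xi_1$ and $W_1\subseteq V[c_0]$, and then to $T_2|_{T_1[W_1]}$, obtaining $\xi_2$ and $W_2\subseteq T_1[W_1]$; each application also yields a factorization of $I_{c_0}$ through $T_i$ of quality roughly $(1-\epsilon)\xi_i/(1+\epsilon)^2$. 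The multiplicative analogue of the key inequality is then $1=\lVert UT_2T_1V|_{W_0}\rVert_e\le(1+\epsilon)^2\xi_1\xi_2\lVert U\rVert\,\lVert V\rVert$ on the subspace $W_0=V^{-1}[T_1^{-1}[W_2]\cap W_1]$, which bounds $\xi_1\xi_2$ from below by essentially $\nu(T_2T_1+\mathscr{K})$ and closes the argument with no stray factor of $\lVert\,\cdot\,\rVert_e$. Until you supply these two nesting arguments (or equivalents), the proposal proves only that $\nu$ is a well-defined, faithful, absolutely homogeneous function dominated by the essential norm, not that it is an algebra norm.
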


\begin{remark}\label{remark:CKhyperplanes:part2} Recall from \Cref{remark:CKhyperplanes:part1} that~$C_0(K)$ is isomorphic to~$C(\alpha K)$ whenever~$K$ is a scattered, locally compact Hausdorff space and~$\alpha K$ denotes its one-point compactification, which is also scattered.
Consequently, in view of the of the isomorphic invariance of \Cref{charCKincompress}, we could have stated it only for compact~$K$ without formally losing any generality. We have nevertheless chosen to state it in the locally compact case for the following reasons: 
\begin{enumerate}[label={\normalfont{(\roman*)}}]
\item The isomorphism between~$C_0(K)$ and~$C(\alpha K)$ is not isometric (as the example of~$c_0$ and~$c$ illustrates; see~\cite{cambern} for details), and the constants~$\delta$, $C$ and~$\eta$ in conditions~\ref{charCKincompress1}, \ref{charCKincompress2} and~\ref{charCKincompress5} may therefore change when passing from one space to the other. We consider the values of these constants to be of some interest in the context of quantitative factorizations.
\item The present version  includes the case $K=\N$ (or in other words $C_0(K)= c_0$) explicitly, which seems natural given the role~$c_0$ plays in the result. Perhaps more importantly, when proving \Cref{charCKincompress} (specifically, in the proof of \Cref{P:JPSalgnorm}), we shall apply various lemmas to~$c_0$ as well as~$C_0(K)$. If the latter class did not contain~$c_0$, some statements would become more complicated. 
\item   When we  applied a precursor of \Cref{charCKincompress} in \Cref{c0KA}, it was for a locally compact space; the ``vanishing at infinity'' property was very helpful in that proof.
\end{enumerate}
\end{remark}

Before initiating the preparations for the proof of \Cref{P:JPSalgnorm}, let us point out that the definition of~$\nu$ and the proof that it is an algebra norm are  heavily inspired by a similar result in~\cite{J-IHABB} concerning operators that the identity operator on~$\ell_2$ factors through. Our version, which involves replacing~$\ell_2$ with~$c_0$, is somewhat harder to prove because~$c_0$ has a much richer subspace structure than~$\ell_2$.

We begin with a variant  of a standard result  about non-compact operators defined on~$c_0$, tailored to our particular needs. 
Combining it with \Cref{L:BA195}, we  arrive at  \Cref{L:factorizationdiagram}, which we shall apply in a number of slightly different contexts in the proof of \Cref{P:JPSalgnorm}. To ensure that it covers all these applications, we have stated the exact details we require in each case, even though this means there is some redundancy in the  statement.

\begin{lemma}\label{L:c0iso} Let $R\in\mathscr{B}(X,Y)$ be a non-compact operator between Banach spaces~$X$ and~$Y$, where~$X$ is $C$-isomorphic to~$c_0$ for some constant $C\ge 1$. Then, for each $\eta\in(0,1)$, $X$ contains a $C$-complemented subspace~$W$ which is $C$-isomorphic to~$c_0$ and satisfies
\begin{equation}\label{L:c0iso:eq1} 
    (1+\eta)C\lVert R\rVert_e\lVert w\rVert\ge \lVert Rw\rVert\ge \frac{1-\eta}{C}\lVert R\rVert_e\lVert w\rVert\qquad (w\in W).
\end{equation}     
\end{lemma}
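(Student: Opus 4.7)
The plan is to transfer the problem to $c_0$ via the given isomorphism and then build $W$ as the pullback of a suitable block subspace of $c_0$. Fix an isomorphism $T\colon X \to c_0$ with $\lVert T\rVert\,\lVert T^{-1}\rVert \leq C$ and set $S = RT^{-1} \in \mcb(c_0, Y)$; routine estimates give $\lVert R\rVert_e/\lVert T\rVert \leq \lVert S\rVert_e \leq \lVert T^{-1}\rVert\,\lVert R\rVert_e$. If $V'$ is any $1$-complemented subspace of $c_0$ which is isometrically isomorphic to $c_0$, then $W := T^{-1}[V']$ is $C$-isomorphic to $c_0$ and $C$-complemented in $X$, via the projection $Q = T^{-1}P_{V'}T$ of norm at most $\lVert T^{-1}\rVert\,\lVert T\rVert \leq C$. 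Since $\lVert v\rVert/\lVert T\rVert \leq \lVert w\rVert \leq \lVert T^{-1}\rVert\,\lVert v\rVert$ for $w = T^{-1}v$, a two-sided estimate of the form $(1-\eta)\lVert S\rVert_e\lVert v\rVert \leq \lVert Sv\rVert \leq (1+\eta)\lVert S\rVert_e\lVert v\rVert$ on $V'$ transfers to the target bound~\eqref{L:c0iso:eq1}, provided $\eta$ is chosen suitably small.

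The crucial technical ingredient is the fact that $\lim_m \lVert S(I-P_m)\rVert = \lVert S\rVert_e$, where $P_m$ denotes the $m^{\text{th}}$ basis projection of $c_0$. The lower bound $\lVert S(I-P_m)\rVert \geq \lVert S\rVert_e$ is immediate because $SP_m$ has finite rank, while the matching upper bound reduces to showing $\lVert K(I-P_m)\rVert \to 0$ for every compact $K\colon c_0 \to Y$; this follows from the complete continuity of $K$ applied to any norm-bounded sequence of the form $\bigl((I-P_m)x^{(m)}\bigr)_m$, which is weakly null in $c_0$. Consequently we may pick $m_0$ with $\lVert S(I-P_{m_0})\rVert \leq (1+\eta_1)\lVert S\rVert_e$, so that the desired upper bound on $\lVert Sv\rVert/\lVert v\rVert$ is automatic for every $v$ in any subspace $V'$ of $(I-P_{m_0})c_0$.

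For the matching lower bound we construct a disjointly supported block basis $(u_n)_{n \in \N}$ of $(e_n)_{n > m_0}$ with $\lVert u_n\rVert_\infty = 1$ and $\lVert Su_n\rVert \geq (1-\eta_2)\lVert S\rVert_e$. Given $u_1, \dots, u_n$ supported in $(m_0, m_n]$, the inequality $\lVert S(I-P_{m_n})\rVert \geq \lVert S\rVert_e$ yields a vector $v \in (I-P_{m_n})c_0$ with $\lVert v\rVert_\infty \leq 1$ and $\lVert Sv\rVert$ arbitrarily close to $\lVert S\rVert_e$; a sliding-hump truncation (justified by $\lVert S(I-P_{m'})v\rVert \to 0$ as $m' \to \infty$ for fixed $v$) then produces $u_{n+1}$ supported in some $(m_n, m_{n+1}]$, which a rescaling normalizes. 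The closed span $V = \overline{\operatorname{span}}\{u_n : n \in \N\}$ is isometrically isomorphic to $c_0$ and is $1$-complemented in $c_0$ by the projection $x \mapsto \sum_n \bigl(x(k_n)/u_n(k_n)\bigr)u_n$, for any choice of $k_n \in \supp(u_n)$ with $|u_n(k_n)| = 1$.

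Finally, applying \Cref{Rosenthal} to $S|_V$ viewed as an operator on $c_0(\N)$ via $u_n \leftrightarrow e_n$, and using that $\inf_n \lVert Su_n\rVert \geq (1-\eta_2)\lVert S\rVert_e > 0$, we obtain an infinite subset $N \subseteq \N$ such that $S$ is bounded below by $(1-\eta_3)\lVert S\rVert_e$ on $V' := \overline{\operatorname{span}}\{u_n : n \in N\}$. The subspace $V'$ is still contained in $(I-P_{m_0})c_0$ and $1$-complemented in $c_0$, so setting $W = T^{-1}[V']$ and choosing $\eta_1, \eta_2, \eta_3$ small enough in terms of $\eta$ yields \eqref{L:c0iso:eq1}. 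The main obstacle is the recursive block construction with simultaneous control of $\lVert Su_n\rVert$ from below and of the $c_0$-norm from above via the sliding hump; by contrast, the essential-norm approximation and the invocation of Rosenthal are comparatively straightforward.
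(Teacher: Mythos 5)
Your proposal is correct and follows essentially the same route as the paper's proof: transfer to $c_0$ via the given isomorphism, use the shrinking basis of $c_0$ to find a tail projection past which the operator's norm nearly equals its essential norm, extract a normalized block basic sequence on which the operator almost attains that value, invoke Rosenthal's theorem (\Cref{Rosenthal}) to pass to a subsequence on which it is bounded below, and pull the resulting $1$-complemented isometric copy of $c_0$ back through the isomorphism. The only differences are presentational (you prove $\lVert S(I-P_m)\rVert\to\lVert S\rVert_e$ as a standalone fact and write out the norm-one projection onto the block subspace explicitly, whereas the paper estimates via a single compact perturbation), so no further comment is needed.
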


\begin{proof}
    Take an isomorphism $U\in\mathscr{B}(c_0,X)$ with $\lVert U\rVert\,\lVert U^{-1}\rVert\le C$, and set $S = RU\in\mathscr{B}(c_0,Y)$. Choose $T\in\mathscr{K}(c_0,Y)$ such that $\lVert S+T\rVert\le (1+\frac{\eta}2)\lVert S\rVert_e$. We have $TP_n\to T$ as $n\to\infty$ because the basis of~$c_0$ is shrinking, so $\lVert T(I_{c_0}-P_{m_0})\rVert\le\frac{\eta}2\lVert S\rVert_e$ for some $m_0\in\N$. 
    
    On the other hand,  $\lVert S(I_{c_0}-P_m)\rVert>(1-\frac{\eta}2)\lVert S\rVert_e$ for each $m\in\N$, and therefore we can find an integer $m'>m$ and a unit vector $w\in\operatorname{span}\{ e_j : m< j\le m'\}$ such that $\lVert Sw\rVert>(1-\frac{\eta}2)\lVert S\rVert_e$. Using this, we can recursively choose integers $m_1<m_2<\cdots$ with $m_1>m_0$ and unit vectors $w_n\in\operatorname{span}\{ e_j : m_{n-1}<j\le m_n\}$ such that \[ \lVert Sw_n\rVert>\Bigl(1-\frac{\eta}2\Bigr)\lVert S\rVert_e\qquad (n\in\N). \] 
    
    Since the linear map on~$c_0$ determined by $e_n\mapsto w_n$ for each $n\in\N$ is an isometry, \Cref{Rosenthal} implies that we can find a subsequence $(w_{n_j})$ of $(w_n)$ such that the restriction of~$S$ to the subspace $W_0 = \overline{\operatorname{span}}\{ w_{n_j} : j\in\N\}$ is bounded below by~$(1-\eta)\lVert S\rVert_e$. Furthermore,   $(w_{n_j})$ is  a normalized block basic sequence of~$(e_n)$, so~$W_0$ is isometrically isomorphic to~$c_0$, and we can take a projection~$Q$ of~$c_0$ onto~$W_0$ with $\lVert Q\rVert=1$. Consequently the subspace $W = U[W_0]$ is $C$-isomorphic to~$c_0$ and complemented in~$X$ via the projection $UQU^{-1}$, which has norm at most~$C$. 
    
    Finally, to verify~\eqref{L:c0iso:eq1}, take $w\in W$, and set $w_0 = U^{-1}w\in W_0$. Then $Rw = Sw_0$ and $w_0 = (I_{c_0} - P_{m_0})w_0$, so 
    \begin{align*} \lVert Rw\rVert &\le \lVert (S+T)w_0\rVert + \lVert T(I_{c_0} - P_{m_0})w_0\rVert\le (1+\eta)\lVert S\rVert_e\lVert w_0\rVert\\ &\le (1+\eta)\lVert R\rVert_e\lVert U\rVert\,\lVert U^{-1}\rVert\,\lVert w\rVert\le (1+\eta)C\lVert R\rVert_e\lVert w\rVert  \end{align*}
    and 
    \begin{align*}
       \lVert Rw\rVert &\ge (1-\eta)\lVert S\rVert_e \lVert w_0\rVert\ge (1-\eta)\frac{\lVert SU^{-1}\rVert_e}{\lVert U^{-1}\rVert}\frac{\lVert Uw_0\rVert}{\lVert U\rVert}\ge\frac{1-\eta}{C} \lVert R\rVert_e\lVert w\rVert. \qedhere  
    \end{align*}
\end{proof}

\begin{definition}
    Let $X$, $Y$ and~$Z$ be Banach spaces. An operator $R\in\mathscr{B}(X,Y)$ \emph{fixes a copy of~$Z$} if there is an operator $V\in\mathscr{B}(Z,X)$ such that the composite operator~$RV$ is bounded below. 
\end{definition}

\begin{corollary}\label{L:factorizationdiagram}
Let~$X$ and~$Y$ be Banach spaces for which the unit ball of~$Y^*$ is weak* se\-quentially compact, and let $R\in\mathscr{B}(X,Y)$ be an operator which fixes a copy of~$c_0$. Then, for every $\epsilon\in(0,1)$, there exist a constant $\xi>0$ and a closed subspace~$W$ of~$X$ such that~$W$ is isomorphic to~$c_0$, 
\begin{equation}\label{L:180423unify:eq1}
      (1+\epsilon)\xi\lVert w\rVert\ge \lVert Rw\rVert\ge (1-\epsilon)\xi\lVert w\rVert\qquad (w\in W), 
\end{equation}  
the subspace $R[W]$ is $(1+\epsilon)$-complemented in~$Y$ and $(1+\epsilon)$-isomorphic to~$c_0$, and  there are operators $U\in\mathscr{B}(Y,c_0)$ and $V\in\mathscr{B}(c_0,X)$ such that 
  \begin{equation}\label{R:IDc0Factorization:eq2} 
 URV = I_{c_0}\qquad\text{and}\qquad  \lVert U\rVert\,\lVert V\rVert\le \frac{(1+\epsilon)^2}{(1-\epsilon)\xi}.
    \end{equation}    
\end{corollary}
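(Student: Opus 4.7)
The plan is to construct $W$ in three stages, tightening the factorization constants at each stage.

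First, the hypothesis that $R$ fixes a copy of $c_0$ yields an operator $V_0\in\mathscr{B}(c_0,X)$ for which $RV_0$ is bounded below; this forces $V_0$ itself to be bounded below, so $V_0[c_0]$ is a closed subspace of~$X$ isomorphic to $c_0$. By James's $c_0$\nobreakdash-theorem (every Banach space containing an isomorphic copy of $c_0$ contains, for every $\delta>0$, a $(1+\delta)$-isomorphic copy), I would then select a closed subspace $\hat X\subseteq V_0[c_0]\subseteq X$ that is $(1+\delta)$-isomorphic to $c_0$, for a $\delta>0$ to be specified. Since $R|_{\hat X}$ remains bounded below, it is in particular non-compact.

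Second, I apply \Cref{L:c0iso} to $R|_{\hat X}\colon\hat X\to Y$ with constant $C=1+\delta$ and parameter $\eta\in(0,1)$, producing a closed subspace $W\subseteq\hat X$, isomorphic to $c_0$, on which
\[ (1+\eta)(1+\delta)\,\lVert R|_{\hat X}\rVert_e\,\lVert w\rVert \geq \lVert Rw\rVert \geq \frac{1-\eta}{1+\delta}\,\lVert R|_{\hat X}\rVert_e\,\lVert w\rVert \qquad (w\in W). \]
Setting $\xi=\lVert R|_{\hat X}\rVert_e$, which is positive because $R|_{\hat X}$ is non-compact, and choosing $\delta,\eta$ small enough that $(1+\eta)(1+\delta)\leq 1+\epsilon$ and $(1-\eta)/(1+\delta)\geq 1-\epsilon$, one obtains the bounds in~\eqref{L:180423unify:eq1}.

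Third, $R|_W$ is bounded below by $(1-\epsilon)\xi$, and $W$ contains a copy of $c_0$, so \Cref{L:BA195} applied to $R|_W$ with constant $1+\epsilon$ will deliver a closed subspace $Z$ of $R[W]$ that is $(1+\epsilon)$-complemented in $Y$ and $(1+\epsilon)$-isomorphic to $c_0$, together with operators $U\in\mathscr{B}(Y,c_0)$ and $V\in\mathscr{B}(c_0,W)$ satisfying $URV=I_{c_0}$ and $\lVert U\rVert\,\lVert V\rVert\leq (1+\epsilon)^2/((1-\epsilon)\xi)$. Inspecting the proof of \Cref{L:BA195}, the operator $V$ factors as $V=JR_0^{-1}S$, where $R_0$ is the isomorphism $(R|_W)^{-1}[Z]\to Z$ induced by $R|_W$, $S\colon c_0\to Z$ is an isomorphism, and $J$ denotes the inclusion of $(R|_W)^{-1}[Z]$ into~$W$; hence $V[c_0]=(R|_W)^{-1}[Z]$. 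Replacing $W$ by this smaller closed subspace gives $R[W]=Z$, while \eqref{L:180423unify:eq1} is preserved since it is stable under passing to subspaces. Composing $V$ with the inclusion $W\hookrightarrow X$ then produces the factorization~\eqref{R:IDc0Factorization:eq2}.

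The main hurdle will be the first stage. Without the refinement by James's $c_0$\nobreakdash-theorem, the constant $C$ entering \Cref{L:c0iso} would inherit the possibly large isomorphism constant between $V_0[c_0]$ and $c_0$, and this factor $C$ would appear in both the upper and lower estimates of that lemma, preventing the ratio of the two bounds in~\eqref{L:180423unify:eq1} from being pushed close to~$1$.
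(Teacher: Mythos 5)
Your proposal is correct and follows essentially the same route as the paper's proof: pass to a $(1+\delta)$-isomorphic copy of $c_0$ inside the fixed copy via James' distortion theorem, apply \Cref{L:c0iso} to get the two-sided estimate with $\xi$ the essential norm of the restriction, and then apply \Cref{L:BA195} with constant $1+\epsilon$, shrinking $W$ to the preimage of the complemented copy $Z$ so that $R[W]=Z$. The only (cosmetic) difference is that the paper couples the two parameters by taking $\delta=\eta$ with $\eta^2+2\eta\le\epsilon$.
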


\begin{proof} Given $\epsilon\in(0,1)$,  choose $\eta>0$ such that $\eta^2+2\eta\le\epsilon$. Since~$R$ fixes a copy of~$c_0$, we can take a closed subspace~$X_1$ of~$X$ such that~$X_1$ is isomorphic to~$c_0$ and  the restriction of~$R$ to~$X_1$ is bounded below. By James' Distortion Theorem~\cite{James}, $X_1$ contains a closed subspace~$X_2$ which is $(1+\eta)$-iso\-mor\-phic to~$c_0$. The restriction of~$R$ to~$X_2$ is non-compact because it is bounded below, so in view of \Cref{L:c0iso}, we can find a closed subspace~$X_3$ of~$X_2$ such that~$X_3$ is isomorphic to~$c_0$ and 
    \begin{equation}\label{L:keystep:eq3}
      (1+\eta)^2\lVert R|_{X_2}\rVert_e\lVert w\rVert\ge\lVert Rw\rVert\ge\frac{1-\eta}{1+\eta}\lVert R|_{X_2}\rVert_e\lVert w\rVert \qquad (w\in X_3).
    \end{equation}
    
    Now, applying \Cref{L:BA195} to the restriction of~$R$ to~$X_3$ and with the constant $C=1+\epsilon$, we obtain a closed subspace~$Z$ of~$R[X_3]$ such that~$Z$ is $(1+\epsilon)$-com\-ple\-mented in~$Y$ and $(1+\epsilon)$-isomorphic to~$c_0$. Set $W = R^{-1}[Z]\cap X_3$ and $\xi = \lVert R|_{X_2}\rVert_e$. Then~$R$ maps~$W$ isomorphically onto~$Z$, so in particular~$W$ is isomorphic to~$c_0$, and~\eqref{L:180423unify:eq1} follows from~\eqref{L:keystep:eq3} because the choice of~$\eta$ implies that $1+\epsilon\ge(1+\eta)^2$ and $1-\epsilon\le (1-\eta)/(1+\eta)$.  Finally the existence of the operators~$U$ and~$V$ satisfying~\eqref{R:IDc0Factorization:eq2} also follows from \Cref{L:BA195} because the restriction of~$R$ to~$X_3$ is bounded below by~$(1-\epsilon)\xi$.     
\end{proof}

When applying this result to~$C_0(K)$ for $K$ scattered, the following variant of a classical theorem of Pe\l{}\-czy\'{n}\-ski will be helpful.

\begin{theorem}\label{weakstardualball}
Let $K$ be a scattered, locally compact Hausdorff  space. An operator from $C_0(K)$ into a Banach space is compact if and only if it is weakly compact, if and only if it does not fix a copy of~$c_0$. 
\end{theorem}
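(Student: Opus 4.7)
My plan is to prove the chain of implications (compact) $\Rightarrow$ (weakly compact) $\Rightarrow$ (does not fix a copy of $c_0$) $\Rightarrow$ (compact). The first implication is immediate from the definitions. For the second, I would argue by contradiction: if a weakly compact operator $T\colon C_0(K)\to Y$ fixes a copy of $c_0$ via some $V\colon c_0\to C_0(K)$, then $TV$ is an isomorphism of $c_0$ onto a closed subspace $Z$ of $Y$, and the ball $B_Z$ is contained in a scalar multiple of the relatively weakly compact set $T[B_{C_0(K)}]$. This would force $Z\cong c_0$ to be reflexive, a contradiction.

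The crucial third implication combines two classical facts. First, the $C_0(K)$-version of Pe\l{}czy\'{n}ski's theorem asserts that every bounded operator from a $C_0(K)$-space into a Banach space which does not fix a copy of $c_0$ is weakly compact. Second, for scattered $K$, the dual $C_0(K)^*$ enjoys the Schur property: the one-point compactification $\alpha K$ is again scattered compact, every regular Borel measure on $\alpha K$ is purely atomic (concentrated on its countably many atoms), so that $C(\alpha K)^*\cong \ell_1(\alpha K)$; and $C_0(K)^*$ embeds as a complemented hyperplane in $C(\alpha K)^*$, while $\ell_1(\Gamma)$ has the Schur property for every index set $\Gamma$. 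Granted these inputs, an operator $T$ that does not fix a copy of $c_0$ is weakly compact by Pe\l{}czy\'{n}ski's theorem, so its adjoint $T^*\colon Y^*\to C_0(K)^*$ is weakly compact as well; the Schur property then promotes the relatively weakly compact set $T^*[B_{Y^*}]$ to relatively norm compact, making $T^*$ compact, and Schauder's theorem in turn gives that $T$ is compact.

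The main obstacle is essentially bookkeeping: locating (or adapting from the compact case via the embedding $C_0(K)\hookrightarrow C(\alpha K)$) the locally compact formulation of Pe\l{}czy\'{n}ski's theorem, and invoking the Rudin--Pe\l{}czy\'{n}ski--Semadeni description of $C(L)^*$ for $L$ scattered compact in order to establish the Schur property. Neither step introduces genuinely new content, and the argument should be quite short, amounting to a careful assembly of classical results.
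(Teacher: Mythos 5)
Your proposal is correct and follows essentially the same route as the paper: Pe\l{}czy\'{n}ski's theorem for the equivalence with weak compactness, and Rudin's identification of $C(\alpha K)^*$ with $\ell_1(\alpha K)$ (hence the Schur property) combined with Gantmacher and Schauder to upgrade weak compactness to compactness. The only cosmetic difference is that you prove ``weakly compact $\Rightarrow$ does not fix a copy of $c_0$'' by the elementary reflexivity argument, whereas the paper gets both directions from Pe\l{}czy\'{n}ski's theorem at once.
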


\begin{proof} In view of \Cref{remark:CKhyperplanes:part1} and the isomorphic nature of this result, we may suppose that~$K$ is compact by replacing it with its one-point com\-pact\-i\-fi\-ca\-tion if necessary. 
Pe\l{}\-czy\'{n}\-ski~\cite{pelczynski} showed that an operator defined on a $C(K)$-space is weakly compact if and only if it does not fix a copy of~$c_0$, and of course every compact operator is weakly compact. 
     
To complete the proof, suppose that~$R$ is a weakly compact operator defined on~$C(K)$. By Gantmacher's Theorem, its adjoint~$R^*$ is also weakly compact. This implies that~$R^*$ is compact because a famous result of Rudin~\cite{rudin} states that its codomain~$C(K)^*$ is isomorphic to~$\ell_1(K)$, which has the Schur property. Hence~$R$ is compact by Schauder's Theorem. 
\end{proof}

Finally, before proving \Cref{P:JPSalgnorm}, we establish an elementary lemma that will help us shorten a couple of steps in the proof.

\begin{lemma}\label{L:HNA}
    Let $X$, $Y$ and $Z$ be infinite-dimensional Banach spaces, and suppose that the operators  $R\in\mathscr{B}(X,Y)$, $T\in\mathscr{B}(Y,Z)$ and $U\in\mathscr{B}(Z,X)$ satisfy
    \begin{equation}\label{L:HNA:eq1}
        I_X - UTR\in\mathscr{K}(X).
    \end{equation}
    Then~$R[X]$ is closed and isomorphic to a closed subspace of finite codimension in~$X$, and the restriction of~$T$ to any closed, infinite-dimensional subspace of~$R[X]$ is non-compact.
\end{lemma}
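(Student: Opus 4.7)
The plan is to extract Fredholm-type information from the compactness hypothesis $I_X - UTR \in \mathscr{K}(X)$, which says that $UTR$ is a compact perturbation of the identity and is therefore Fredholm of index zero on $X$.

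Standard Riesz--Schauder theory then produces a closed subspace $X_0$ of finite codimension in $X$ on which $UTR$ is bounded below. The trivial estimate $\lVert UT\rVert\,\lVert Rx\rVert \geq \lVert UTRx\rVert$, together with the fact that $\lVert UT\rVert > 0$ (otherwise $I_X = I_X - UTR$ would be compact, contradicting $\dim X = \infty$), shows that $R$ itself is bounded below on $X_0$. Consequently $R[X_0]$ is closed in $Y$ and isomorphic to $X_0$. Since $R[X]$ equals $R[X_0]$ plus the finite-dimensional subspace $R[X']$ for any algebraic complement $X'$ of $X_0$ in $X$, it is closed, and a straightforward decomposition yields $R[X] \cong X_0 \oplus F$ for some finite-dimensional $F$ of dimension at most $\operatorname{codim}_X(X_0)$. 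Picking any subspace of the same dimension as $F$ inside a complement of $X_0$ in $X$ realizes $R[X]$ as isomorphic to a closed subspace of finite codimension in $X$.

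For the second assertion, let $W$ be a closed, infinite-dimensional subspace of $R[X]$ and suppose towards a contradiction that $T|_W$ is compact. The key dimension count is that $W_0 := W \cap R[X_0]$ has finite codimension in $W$, as is visible from the natural injection $W/W_0 \hookrightarrow R[X]/R[X_0]$ with finite-dimensional codomain; hence $W_0$ is closed and infinite-dimensional, and $UT|_{W_0}$ is compact.

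The contradiction will come from rewriting $UT$ on $R[X_0]$. Setting $R_0 := (R|_{X_0})^{-1}\colon R[X_0] \to X_0$ and $K := I_X - UTR \in \mathscr{K}(X)$, every $y = R x_0 \in R[X_0]$ with $x_0 \in X_0$ satisfies
\[ UT(y) = UTR(x_0) = (I_X - K)(R_0 y), \]
so $UT|_{R[X_0]} = R_0 - K R_0$. Restricting to $W_0$, compactness of $UT|_{W_0}$ and of $KR_0|_{W_0}$ forces $R_0|_{W_0}$ to be compact. But $R_0$ is bounded below, so $R_0|_{W_0}$ is a linear homeomorphism of the infinite-dimensional space $W_0$ onto its image and cannot be compact. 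The only delicate point is the codimension bookkeeping needed to identify $W_0$ as an infinite-dimensional piece of $W$; everything else is routine Fredholm calculus.
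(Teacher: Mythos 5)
Your argument is correct; it establishes both assertions by the same Fredholm-theoretic mechanism as the paper but executes each half differently. For the first assertion, the paper simply invokes \cite[Corollary~1.3.4]{CPY}: since $UTR$ is Fredholm, its right factor $R$ is upper semi-Fredholm, so $R$ has closed range and finite-dimensional kernel, and $R[X]$ is isomorphic to a closed complement of $\ker R$, which has finite codimension. You rebuild this by hand, restricting to a finite-codimensional subspace $X_0$ on which $UTR$ (and hence, by the norm estimate, $R$) is bounded below and then adjoining a finite-dimensional summand; this is self-contained but costs you the bookkeeping with the auxiliary space $F$. For the second assertion the paper's route is shorter and avoids the intersection $W\cap R[X_0]$ entirely: given a closed subspace $Y_0$ of $R[X]$ on which $T$ is compact, it pulls back to $X_0'=R^{-1}[Y_0]$, notes that $UTR|_{X_0'}$ factors through the compact operator $T|_{Y_0}$ and is therefore compact, so that the inclusion of $X_0'$ into $X$ is compact by the hypothesis $I_X-UTR\in\mathscr{K}(X)$, forcing $X_0'$, and with it $Y_0=R[X_0']$, to be finite-dimensional. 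Your push-forward version via the local inverse $R_0=(R|_{X_0})^{-1}$, resting on the identity $UT|_{R[X_0]}=R_0-KR_0$, is just the range-side reformulation of the same equation $UTR=I_X-K$ and is equally valid. In short, both proofs are correct; the paper's is more economical thanks to the citation and the pull-back trick, while yours is more elementary and self-contained.
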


\begin{proof} 
Equation~\eqref{L:HNA:eq1} implies that~$UTR$ is a Fredholm operator. Therefore~$R$ is an upper semi-Fredholm operator by \cite[Corollary~1.3.4]{CPY}, so~$R$ has closed range and finite-dimensional kernel. Let~$W$ be a closed, complementary subspace of~$\ker R$; that is, $W+\ker R = X$ and $W\cap \ker R=\{0\}$.  Then the restriction of~$R$ to~$W$ is an isomorphism onto~$R[X]$. 

Suppose that~$Y_0$ is a closed subspace of~$R[X]$ such that $T|_{Y_0}$ is compact, set $X_0 = R^{-1}[Y_0]$, and regard the restriction~$\widetilde{R}$ of~$R$ to~$X_0$ as an operator into~$Y_0$; it satisfies $UTR|_{X_0} = UT|_{Y_0}\widetilde{R}$, which implies that $UTR|_{X_0}$ is compact. Hence~$X_0$ is finite-dimensional by~\eqref{L:HNA:eq1}, so $R[X_0]=Y_0$ is also finite-dimensional.
\end{proof}

\begin{remark} The same proof as above will work if one replaces $\mathscr{K}(X)$ with the larger ideal of inessential operators in~\eqref{L:HNA:eq1}.
\end{remark}

\begin{proof}[Proof of Proposition~{\normalfont{\ref{P:JPSalgnorm}}}]
  We shall apply \Cref{L:factorizationdiagram} several times in this proof, in each case to a non-compact operator~$R$ whose domain and codomain are (isomorphic to) either~$c_0$ or~$C_0(K)$. To avoid repetition, let us once and for all state that Theorems~\ref{weakstardualball} and~\ref{weakstardualball:formerpart1}  
  ensure that the hypotheses of \Cref{L:factorizationdiagram} on~$R$ and its codomain are satisfied in this case. 

This observation shows in particular that the identity operator on~$c_0$ factors through every non-compact operator on~$C_0(K)$, so the definition of~$\nu$ makes sense. Furthermore, it is clear  that~$\nu$ is faithful and absolutely homogeneous. 

In the remainder of the proof, suppose that $T_1,T_2\in\mathscr{B}(C_0(K))$.\smallskip

\textbf{Subadditivity.} Let us begin by observing that the inequality \[ \nu(T_1+T_2+\mathscr{K}(C_0(K)))\le\nu(T_1+\mathscr{K}(C_0(K)))+\nu(T_2+\mathscr{K}(C_0(K))) \]
 is trivial if $T_1+T_2$ is compact. Otherwise, for each $\epsilon\in(0,1)$, we can find operators $U\in\mathscr{B}(C_0(K),c_0)$ and $V\in\mathscr{B}(c_0,C_0(K))$ such that 
\begin{equation}\label{JPSsubmult:eq4}
 I_{c_0} - U(T_1+T_2)V\in\mathscr{K}(c_0)   
\end{equation}
and 
\begin{equation*}
\frac1{\lVert U\rVert\,\lVert V\rVert}\ge (1-\epsilon)\nu(T_1+T_2+\mathscr{K}(C_0(K))). 
\end{equation*}
Set $R_i = UT_iV\in\mathscr{B}(c_0)$ for $i=1,2$. By~\eqref{JPSsubmult:eq4}, at least one of these operators is non-compact, say~$R_1$ (relabelling them if necessary). 
Therefore we can apply \Cref{L:factorizationdiagram} to find a constant $\xi_1>0$ and a closed subspace~$W_1$ of~$c_0$ such that~$W_1$ is isomorphic to~$c_0$, 
\begin{equation}\label{JPSsubmult:eq6}
      (1+\epsilon)\xi_1\lVert w\rVert\ge \lVert R_1w\rVert\ge (1-\epsilon)\xi_1\lVert w\rVert\qquad (w\in W_1) 
\end{equation}   
and  $I_{c_0} = U_1R_1V_1$ for  some operators $U_1,V_1\in\mathscr{B}(c_0)$ with $\lVert U_1\rVert\,\lVert V_1\rVert\le\frac{(1+\epsilon)^2}{(1-\epsilon)\xi_1}$.

Now we split in two cases, beginning with the case where the restriction of~$R_2$ to~$W_1$ is non-compact. Since~$W_1$ is isomorphic to~$c_0$, we can apply \Cref{L:factorizationdiagram} once more, this time to the operator $R_2|_{W_1}$, to find a constant $\xi_2>0$ and a closed, in\-fi\-nite-di\-men\-sional subspace~$W_2$ of~$W_1$ such that
\begin{equation}\label{JPSsubmult:eq7}
      (1+\epsilon)\xi_2\lVert w\rVert\ge \lVert R_2w\rVert\ge (1-\epsilon)\xi_2\lVert w\rVert\qquad (w\in W_2)
\end{equation}    
and $I_{c_0} = U_2R_2V_2$ for  some operators $U_2,V_2\in\mathscr{B}(c_0)$ with $\lVert U_2\rVert\,\lVert V_2\rVert\le\frac{(1+\epsilon)^2}{(1-\epsilon)\xi_2}$. 

Then, for $i\in\{1,2\}$, we have $I_{c_0} = U_iR_iV_i = (U_iU)T_i(VV_i)$, so 
\begin{align}\label{JPSsubmult:eq8}
  \nu(T_i+\mathscr{K}(C_0(K)))&\ge \frac1{\lVert U_i\rVert\,\lVert U\rVert\,\lVert V\rVert\,\lVert V_i\rVert}\ge\frac{(1-\epsilon)^2\xi_i}{(1+\epsilon)^2}\nu(T_1+T_2+\mathscr{K}(C_0(K))).   
\end{align} 
The left-hand inequalities in~\eqref{JPSsubmult:eq6}--\eqref{JPSsubmult:eq7} and the fact that $W_2\subseteq W_1$ imply that
\begin{equation}\label{JPSsubmult:eq9}
    (1+\epsilon)(\xi_1+\xi_2)\ge \lVert (R_1+R_2)|_{W_2}\rVert\ge \lVert (R_1+R_2)|_{W_2}\rVert_e = \lVert I_{c_0}|_{W_2}\rVert_e =1, 
\end{equation} 
where the penultimate equality follows from~\eqref{JPSsubmult:eq4}. 
Adding up the estimates~\eqref{JPSsubmult:eq8} for $i=1$ and $i=2$ and substituting the lower bound on~$\xi_1+\xi_2$ from ~\eqref{JPSsubmult:eq9} into this sum, we obtain
\begin{equation}\label{JPSsubmult:eq10} \nu(T_1+\mathscr{K}(C_0(K))) + \nu(T_2+\mathscr{K}(C_0(K)))\ge \frac{(1-\epsilon)^2}{(1+\epsilon)^3}\nu(T_1+T_2+\mathscr{K}(C_0(K))).   \end{equation}

We claim that this inequality also holds true in the case where~$R_2|_{W_1}$ is compact. Indeed, the estimate~\eqref{JPSsubmult:eq8} remains valid for $i=1$, while we can modify~\eqref{JPSsubmult:eq9} in the  following way:
\[ (1+\epsilon)\xi_1\ge \lVert R_1|_{W_1}\rVert\ge \lVert R_1|_{W_1}\rVert_e = \lVert (R_1+R_2)|_{W_1}\rVert_e = \lVert I_{c_0}|_{W_1}\rVert_e =1.
\]
Now~\eqref{JPSsubmult:eq10} follows by using the trivial lower bound~$0$ on~$\nu(T_2+\mathscr{K}(C_0(K)))$.

Since~\eqref{JPSsubmult:eq10} holds true for arbitrary $\epsilon\in(0,1)$, we conclude that~$\nu$ is subadditive.\medskip

\textbf{Submultiplicativity.} The inequality \[ \nu(T_2T_1+\mathscr{K}(C_0(K)))\le\nu(T_2+\mathscr{K}(C_0(K)))\nu(T_1+\mathscr{K}(C_0(K))) \] is trivial if~$T_2T_1$ is compact.
Otherwise, for each $\epsilon\in(0,1)$, we can find operators $U\in\mathscr{B}(C_0(K),c_0)$ and $V\in\mathscr{B}(c_0,C_0(K))$ such that 
\begin{equation}\label{JPSsubmult:eq1}
 I_{c_0} - UT_2T_1V\in\mathscr{K}(c_0)\qquad\text{and}\qquad   
\frac1{\lVert U\rVert\,\lVert V\rVert}\ge (1-\epsilon)\nu(T_2T_1+\mathscr{K}(C_0(K))). 
\end{equation}
Applying \Cref{L:factorizationdiagram} and \Cref{L:HNA} twice, we obtain constants $\xi_1,\xi_2>0$, closed sub\-spaces $W_1\subseteq V[c_0]$ and $W_2\subseteq T_1[W_1]$ and operators $U_1,U_2\in\mathscr{B}(C_0(K),c_0)$ and $V_1,V_2\in\mathscr{B}(c_0,C_0(K))$ such that~$T_1[W_1]$ and~$T_2[W_2]$ are isomorphic to~$c_0$ and 
\begin{equation}\label{JPSsubmult:eq2}
   (1+\epsilon)\xi_i\lVert w\rVert\ge \lVert T_iw\rVert\ge (1-\epsilon)\xi_i\lVert w\rVert,\quad U_iT_iV_i = I_{c_0},\quad
   \lVert U_i\rVert\,\lVert V_i\rVert\le\frac{(1+\epsilon)^2}{(1-\epsilon)\xi_i}
\end{equation}    
for $w\in W_i$ and $i\in\{1,2\}$. (To explain this construction in detail, for $i=1$ we apply \Cref{L:factorizationdiagram} to the operator~$T_1|_{V[c_0]}$; this is justified because \Cref{L:HNA} and the first part of~\eqref{JPSsubmult:eq1} show that the domain~$V[c_0]$ is isomorphic to~$c_0$ and~$T_1|_{V[c_0]}$ is non-compact. Next, for $i=2$, we apply \Cref{L:factorizationdiagram} to the operator~$T_2|_{T_1[W_1]}$; by construction its domain~$T_1[W_1]$ is isomorphic to~$c_0$, and since $T_1[W_1]\subseteq T_1V[c_0]$, we can apply \Cref{L:HNA} and~\eqref{JPSsubmult:eq1} once more to deduce that~$T_2|_{T_1[W_1]}$  is non-compact.)  

The second and third identity in~\eqref{JPSsubmult:eq2} show that
\begin{equation}\label{JPSsubmult:eq3}
    \nu(T_2+\mathscr{K}(C_0(K)))\nu(T_1+\mathscr{K}(C_0(K)))\ge \frac1{\lVert U_2\rVert\,\lVert V_2\rVert\,\lVert U_1\rVert\,\lVert V_1\rVert}\ge \frac{(1-\epsilon)^2\xi_1\xi_2}{(1+\epsilon)^4}. 
\end{equation} 
Set $W_0 = V^{-1}[T_1^{-1}[W_2]\cap W_1]$. This is a closed subspace of~$c_0$ such that \[ V[W_0] = T_1^{-1}[W_2]\cap W_1\qquad\text{and}\qquad T_1V[W_0]=W_2. \] The latter identity implies that~$W_0$ is infinite-dimensional. Combining this with the first part of~\eqref{JPSsubmult:eq1} and the left-hand inequality in~\eqref{JPSsubmult:eq2}, we find
\[ 1= \lVert I_{c_0}|_{W_0}\rVert_e = \lVert UT_2T_1V|_{W_0}\rVert_e\le \lVert U\rVert\,\lVert T_2|_{W_2}\rVert\,\lVert T_1|_{W_1}\rVert\,\lVert V\rVert\le (1+\epsilon)^2 \xi_1\xi_2\lVert U\rVert\,\lVert V\rVert. \]
This produces a lower bound on~$\xi_1\xi_2$, which we can substitute into~\eqref{JPSsubmult:eq3} to obtain
\begin{align*} \nu(T_2+\mathscr{K}(C_0(K)))\nu(T_1+\mathscr{K}(C_0(K))) &\ge \frac{(1-\epsilon)^2}{(1+\epsilon)^6\lVert U\rVert\,\lVert V\rVert}\\ &\ge \frac{(1-\epsilon)^3}{(1+\epsilon)^6}\nu(T_2T_1+\mathscr{K}(C_0(K))).
\end{align*}
Since this inequality holds true for arbitrary $\epsilon\in(0,1)$, we conclude that~$\nu$ is submultiplicative.

Finally, the inequality~\eqref{P:JPSalgnorm:eq1} follows from the fact that if $I_{c_0}-UTV\in\mathscr{K}(c_0)$, then 
\[ 1 = \lVert I_{c_0}\rVert_e = \lVert UTV\rVert_e\le \lVert U\rVert\,\lVert T\rVert_e\,\lVert V\rVert. \qedhere \]
\end{proof}

\begin{proof}[Proof of Theorem~{\normalfont{\ref{charCKincompress}}}] We can apply \Cref{P:AequivB} because the unit ball of~$C_0(K)^*$ is weak* sequentially compact by \Cref{weakstardualball:formerpart1}; it  shows that conditions~\ref{charCKincompress1} and~\ref{charCKincompress2} are equivalent.

  We shall now complete the proof by verifying that conditions~\ref{charCKincompress2}--\ref{charCKincompress5} are equivalent.  \Cref{Factorisation} shows that~\ref{charCKincompress2} implies~\ref{charCKincompress3}, which trivially implies~\ref{charCKincompress4}.

  \ref{charCKincompress4}$\Rightarrow$\ref{charCKincompress5}. Suppose that~$\mathscr{B}(C_0(K))/\mathscr{K}(C_0(K))$ is incompressible, and consider the identity map~$\iota$ on~$\mathscr{B}(C_0(K))/\mathscr{K}(C_0(K))$, which is obviously an algebra isomorphism. \Cref{P:JPSalgnorm} shows that~$\iota$ is continuous if we endow its domain with the essential norm and its codomain with the algebra norm~$\nu$ given by~\eqref{eq:def_nu}. Therefore the hypothesis implies that~$\iota$ is bounded below; that is, we can find a constant $\eta>0$ such that
  \begin{equation}\label{eq:20july23} \nu(T+\mathscr{K}(C_0(K)))\ge\eta\lVert T\rVert_e\qquad (T\in\mathscr{B}(C_0(K))). \end{equation}

  \ref{charCKincompress5}$\Rightarrow$\ref{charCKincompress2}. Suppose that $\eta>0$ is  a constant for which~\eqref{eq:20july23} is satisfied, and  note that $\eta\le1$ by~\eqref{P:JPSalgnorm:eq1}.
We claim that~\ref{charCKincompress2} is satisfied for any constant $C>1/\eta$. Indeed, let $T\in\mathscr{B}(C_0(K))$ be a non-compact operator  with $\lVert T\rVert_e =1$, and take $\xi\in(1/C,\eta)$. Then we have  $\nu(T+\mathscr{K}(C_0(K)))\ge\eta>\xi$, so the definition~\eqref{eq:def_nu} of~$\nu$ implies that there are operators $U_1\in\mathscr{B}(C_0(K),c_0)$ and $V_1\in\mathscr{B}(c_0,C_0(K))$ such that 
\[ I_{c_0} -U_1TV_1\in\mathscr{K}(c_0)\qquad\text{and}\qquad \frac{1}{\lVert U_1\rVert\,\lVert V_1\rVert} > \xi. \]
Since $C\xi>1$, \Cref{L:perturb} shows that $U_2(U_1TV_1)V_2 = I_{c_0}$ for some operators $U_2,V_2\in\mathscr{B}(c_0)$ with $\lVert U_2\rVert\,\lVert V_2\rVert< C\xi$. Hence the operators $U= U_2U_1$ and $V=V_1V_2$ have the required properties.
\end{proof}

\subsection*{Acknowledgements} This paper is part of the first-named author's PhD at Lan\-caster Uni\-ver\-sity. He acknowledges with thanks the funding from the EPSRC (grant number EP/R513076/1) that has supported his studies.
 
We are grateful to Bill Johnson (Texas A\&{}M) for sharing an early draft of the manuscript~\cite{JKS}  with us, and to Hans-Olav Tylli (Helsinki) for pointing out \Cref{GST:Thm2.6}.

\end{document}